\providecommand{\U}[1]{\protect\rule{.1in}{.1in}}
\newtheorem{theorem}{Theorem}[section]
\newtheorem{corollary}[theorem]{Corollary}
\newtheorem{lemma}[theorem]{Lemma}
\newtheorem{proposition}[theorem]{Proposition}
\theoremstyle{definition}
\newtheorem{fact}[theorem]{Fact}
\begin{document}
\title[F.A. of Transitive Logics of Finite Depth and Finite Weak Width]{Finite Axiomatizability  of Transitive Logics of Finite Depth and of Finite Weak Width}
\author {Yan Zhang}
\address {Department of Philosophy, Renmin University of China}

\begin{abstract}
This paper presents a study of the finite axiomatizability of transitive
logics of finite depth and finite weak width. We prove the finite axiomatizability
of each transitive logic of finite depth and of weak width $1$ that
is characterized by rooted transitive frames in which all antichains contain at most $n$ irreflexive points. As a negative result,
we show that there are non-finitely-axiomatizble transitive logics
of depth $n$ and of weak width $k$ for each $n\geqslant3$ and $k\geqslant2$.

\end{abstract}

\keywords{Modal logic, finite axiomatizability, transitive logics, finite depth, finite width}
\maketitle

\section*{}

This paper presents a study of finite axiomatizability of transitive
logics of finite depth and of finite weak width, i.e., containing a
weak width formula $\mathsf{Wid}_{n}^{+}$ for some $n\geqslant1$.
These formulas are weaker forms of width formulas in \cite{Fine-74-logics-containing-K4-part-1},
and each $\mathsf{Wid}_{n}^{+}$ ($n\geqslant1$) corresponds to the
condition within  rooted transitive frames that all subframes generated
by some proper successor of a root are at most width $n$. As a negative
results, we show that there are non-finitely-axiomatizble transitive
logics of depth $n$ and of weak width $k$ for each $n\geqslant3$
and $k\geqslant2$, by a way of constructing infinite irreducible
sequences of frames. As a positive result, we prove the finite axiomatizability
of each transitive logic of finite depth and of weak width $1$ that
contains $\mathsf{Wid}_{n}^{\bullet}$ for an $n\geqslant1$, in which
$\mathsf{Wid}_{n}^{\bullet}$ corresponds to the condition within
rooted transitive frames that each antichain in them contains at most
$n$ irreflexive points.

Section \ref{sect: preliminaries} provides preliminary notions and
facts, and section \ref{sect: seq of frames and seq of lists} gives
criteria of finite axiomatizability of transitive logics whose extensions
all have the f.m.p. In section \ref{sec:Transitive-Logics-of FD and FSW},
we introduce transitive logics of finite depth and of finite weak width,
and prove the non-finite-axiomatizability result. We then present
in section \ref{sect: FA of lgcs of finite suc-eq-width} our main
result of finite axiomatizability of transitive logic of finite depth
and of weak width $1$, and concludes the paper in section \ref{sect: remarks}.

\section{Preliminaries\label{sect: preliminaries}}

This section lists some standard preliminary notions and theorems,
and a full account can be found in standard modal logic textbooks
(e.g., \cite{Chagrov-Zakharyaschev-ml-bk-97}, \cite{Blackburn-de-Rijke-ml-book}).
\emph{Modal formulas} are built up from propositional variables, using
truth-functional operators and the necessity operator $\Box$. We
will simply call them formulas. A \emph{normal modal logic} (or simply
modal logic) is a set of modal formulas that contains all truth-functional
tautologies and $\Box(p\rightarrow q)\rightarrow(\Box p\rightarrow\Box q)$,
and is closed under modus ponens, substitution and necessitation.
As usual, we use $\mathbf{K}$ ($\mathbf{K4}$) for the smallest
modal logic (containing $\Box p\rightarrow\Box\Box p$). For each
modal logic $\mathbf{L}$, an \emph{extension} of $\mathbf{L}$ is
a modal logic $\mathbf{L}^{\prime}$ such that $\mathbf{L}\subseteq\mathbf{L}^{\prime}$.
Let $\mathbf{L}$ be any modal logic and let $\Delta$ be any set
of formulas, $\mathbf{L}\oplus\Delta$ is the smallest modal logic
including $\mathbf{L}\cup\Delta$; and for each formula $\phi$, we
use $\mathbf{L}\oplus\phi$ for $\mathbf{L}\oplus\{\phi\}$. As usual,
we use $\mathbf{S4}$ for $\mathbf{K4}\oplus\Box p\rightarrow p$.
A modal logic $\mathbf{L}^{\prime}$ is \emph{finitely axiomatizable
over} $\mathbf{L}$ if $\mathbf{L}^{\prime}=\mathbf{L}\oplus\Delta$
for a finite $\Delta$, and is \emph{finitely axiomatizable} if it
is finitely axiomatizable over $\mathbf{K}$.

Let $\mathfrak{F}=\left\langle W,R\right\rangle $ be any frame with
$w\in W$, and let $\mathfrak{M}$ be any model on $\mathfrak{F}$.
For each formula $\phi$, we use $\mathfrak{M},w\vDash\phi$ for that
$\mathfrak{M}$ satisfies $\phi$ at $w$, use $\mathfrak{F},w\vDash\phi$
for that $\mathfrak{M}^{\prime},w\vDash\phi$ for each model $\mathfrak{M}^{\prime}$
on $\mathfrak{F}$, and use $\mathfrak{F}\vDash\phi$ for that $\phi$
is valid in $\mathfrak{F}$ ($\mathfrak{F}$ is a frame for $\phi$).
For any set $\Delta$ of formulas, and any class $\mathscr{C}$ of
frames, the validity-relation $\vDash$ between them are defined as
usual, and we will use $\mathbf{Log}(\mathscr{C})$ for the modal
logic $\{\phi:\mathscr{C}\vDash\phi\}$. For all $u,v\in W$, let
$\vec{R}uv$ iff $Ruv$ but not $Rvu$, and let $u\perp_{R}v$ iff
neither $Ruv$ nor $Rvu$. For all $u,v\in W$, when $Ruv$, we say
that $u$ \emph{sees} $v$, and call $v$ a \emph{successor of} $u$;
and when $\vec{R}uv$, we call $v$ a \emph{proper successor} of $u$,
and $u$ a \emph{proper predecessor} of $v$. For each $X\subseteq W$,
let $\left.X\right\uparrow _{R}=\{v:Ruv$ for a $u\in X\}$, $\left.X\right\downarrow _{R}=\{v:Rvu$
for a $u\in X\}$, $\left.X\right\uparrow _{R}^{-}=\left.X\right\uparrow _{R}-X$
and $\left.X\right\downarrow _{R}^{-}=\left.X\right\downarrow _{R}-X$.
When $R$ is clear in the context, we drop ``$_{R}$”\ and use ``$\left.X\right\uparrow $”\ and
``$\left.X\right\downarrow $”\ instead. For each $w\in W$, let
$\left.w\right\uparrow =\{w\}{\uparrow}$, $\left.w\right\downarrow =\{w\}{\downarrow}$,
$\left.w\right\uparrow ^{-}=\{w\}{\uparrow}^{-}$ and $\left.w\right\downarrow ^{-}=\{w\}{\downarrow}^{-}$.

For each family $\{\mathfrak{F}_{i}\}_{i\in I}$ ($\{\mathfrak{M}_{i}\}_{i\in I}$)
of pairwise disjoint frames (models), we use $\biguplus_{i\in I}\mathfrak{F}_{i}$
($\biguplus_{i\in I}\mathfrak{M}_{i}$) for the disjoint union of
$\{\mathfrak{F}_{i}\}_{i\in I}$ ($\{\mathfrak{M}_{i}\}_{i\in I}$).
For each frame $\mathfrak{F}=\left\langle W,R\right\rangle $ and
each model $\mathfrak{M}$ on $\mathfrak{F}$, and for each nonempty
$X\subseteq W$, we use $\mathfrak{F}\upharpoonright X$ ($\mathfrak{M}\upharpoonright X$)
for the restriction of $\mathfrak{F}$ ($\mathfrak{M}$) to $X$,
and use $\mathfrak{F}|_{X}$ ($\mathfrak{M}|_{X}$) for the subframe
of $\mathfrak{F}$ (submodel of $\mathfrak{M}$) generated by $X$;
and when $X=\{w\}$, we use $\mathfrak{F}|_{w}$ and $\mathfrak{M}|_{w}$
for $\mathfrak{F}|_{\{w\}}$ and $\mathfrak{M}|_{\{w\}}$ respectively.
For frames $\mathfrak{F}$ and $\mathfrak{G}$ (models $\mathfrak{M}$
and $\mathfrak{M}^{\prime}$), we say that a function $f$ \emph{reduces}
$\mathfrak{F}$ ($\mathfrak{M}$) to $\mathfrak{G}$ ($\mathfrak{M}^{\prime}$)
when $f$ is a reduction of $\mathfrak{F}$ ($\mathfrak{M}$) to $\mathfrak{G}$
($\mathfrak{M}^{\prime}$); and that $\mathfrak{F}$ ($\mathfrak{M}$)
is \emph{reducible to} $\mathfrak{G}$ ($\mathfrak{M}^{\prime}$)
if a function reduces $\mathfrak{F}$ ($\mathfrak{M}$) to $\mathfrak{G}$
($\mathfrak{M}^{\prime}$). We assume the reader's familiarity with
the related theorems on preservation of truth and validity under these
frame/model constructions. 

Let $\mathfrak{F}=\left\langle W,R\right\rangle $ be a transitive
frame. For all $w,u\in W$, $w\sim_{R}u$ iff either $w=u$, or $Rwu$
and $Ruw$. A \emph{cluster} in $\mathfrak{F}$ is an equivalence
class modulo $\sim_{R}$. For each $w\in W$, we use $\boldsymbol{c}_{(w)}$
for the cluster containing $w$. For each cluster $\boldsymbol{c}$
in $\mathfrak{F}$, $\boldsymbol{c}$ is \emph{degenerate} if it is
a singleton of an irreflexive point in $\mathfrak{F}$, otherwise,
it is \emph{nondegenerate.} Let $k\geqslant1$. A point $u_{1}$
in $\mathfrak{F}$ is \emph{of rank greater than} $k$ if there is
an $\vec{R}$-chain $\{u_{1},\ldots,u_{n}\}$ with $n>k$, and is
of \emph{rank} $k$ if there is an $\vec{R}$-chain $\{u_{1},\ldots,u_{k}\}$
and $u_{1}$ is not of rank greater than $k$. $\mathfrak{F}$ is
of \emph{rank} $k$ if it contains a point of rank $k$ but no point
of rank greater than $k$, and is \emph{of finite rank} if it is of
rank $k$ for some $k\geqslant1$. The following formulas are from
\cite{Segerberg-essay-MD}, where $i\geqslant1$:
\begin{align*}
\mathsf{B}_{1} & =\Diamond\Box p_{1}\rightarrow p_{1}\text{,}\\
\mathsf{B}_{i+1} & =\Diamond(\Box p_{i+1}\wedge\lnot\mathsf{B}_{i})\rightarrow p_{i+1}\text{.}
\end{align*}
We use $\mathbf{K4B}_{n}$ ($\mathbf{S4B}_{n}$ ) for $\mathbf{K4}\oplus\mathsf{B}_{n}$
($\mathbf{S4}\oplus\mathsf{B}_{n}$), where $n\geqslant1$. A transitive
logic is \emph{of depth} $n$ ($n\geqslant1$) if it contains $\mathsf{B}_{n}$
but not $\mathsf{B}_{n-1}$ (it is assumed that $\mathsf{B}_{0}=\bot$),
and is \emph{of finite depth} if it contains $\mathsf{B}_{k}$ for
a $k\geqslant1$. The following are established in \cite{Segerberg-essay-MD}:

\begin{proposition} For each transitive frame $\mathfrak{F}$ and
each $n\geqslant1$, $\mathfrak{F}\vDash\mathsf{B}_{n}$ iff $\mathfrak{F}$
is of rank at most $n$.\label{prop: frame cond 4 B_k} \end{proposition}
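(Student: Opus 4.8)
The plan is to reduce the two-sided equivalence to a single, more informative claim about the \emph{satisfiability} of the negations $\neg\mathsf{B}_n$, which I then prove by induction on $n$. Concretely, I would establish: for every transitive frame $\mathfrak{F}=\langle W,R\rangle$ and every $w\in W$, there is a model $\mathfrak{M}$ on $\mathfrak{F}$ with $\mathfrak{M},w\vDash\neg\mathsf{B}_n$ if and only if $w$ is of rank greater than $n$, i.e.\ there is an $\vec R$-chain of length $n+1$ whose first point is $w$. Granting this claim the proposition is immediate, since $\mathfrak{F}\vDash\mathsf{B}_n$ says precisely that no point of $W$ falsifies $\mathsf{B}_n$ under any valuation, which by the claim is equivalent to no point being of rank greater than $n$, that is, to $\mathfrak{F}$ being of rank at most $n$.

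For the \emph{base case} $n=1$, note $\neg\mathsf{B}_1$ is equivalent to $\Diamond\Box p_1\wedge\neg p_1$. If $w$ is of rank greater than $1$, pick $u$ with $\vec R wu$ and set $V(p_1)=u{\uparrow}$; then $\Box p_1$ holds at $u$ while $p_1$ fails at $w$ (as $\vec R wu$ gives not $Ruw$, so $w\notin u{\uparrow}$), whence $w\vDash\neg\mathsf{B}_1$. Conversely any model with $w\vDash\neg\mathsf{B}_1$ supplies a $u$ with $Rwu$, $u\vDash\Box p_1$ and $w\not\vDash p_1$; then $w\notin u{\uparrow}$ forces not $Ruw$, hence $\vec R wu$, so $w$ is of rank greater than $1$.

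For the \emph{inductive step} ($n\geqslant2$) I use the claim at $n-1$ in both directions. If $\mathfrak{M},w\vDash\neg\mathsf{B}_n$, there is $u$ with $Rwu$, $u\vDash\Box p_n$ and $u\vDash\neg\mathsf{B}_{n-1}$; since $p_n$ fails at $w$ but $\Box p_n$ holds at $u$ we again obtain $\vec R wu$, and the induction hypothesis yields an $\vec R$-chain of length $n$ beginning at $u$, which prepended with $w$ becomes, by transitivity, a proper chain of length $n+1$ beginning at $w$. For the converse, take an $\vec R$-chain $w=v_1,\ldots,v_{n+1}$; then $v_2$ is of rank greater than $n-1$ via the subchain $v_2,\ldots,v_{n+1}$, so by the induction hypothesis applied to the generated subframe $\mathfrak{F}|_{v_2}$ there is a valuation of $p_1,\ldots,p_{n-1}$ with $v_2\vDash\neg\mathsf{B}_{n-1}$. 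I then set $V(p_n)=v_2{\uparrow}$, which makes $\Box p_n$ true at $v_2$ and, since $\vec R v_1 v_2$ gives $w=v_1\notin v_2{\uparrow}$, leaves $p_n$ false at $w$. As $p_1,\ldots,p_{n-1}$ are disjoint from $p_n$ and $\neg\mathsf{B}_{n-1}$ is evaluated inside the generated submodel on $v_2{\uparrow}$, the two valuations do not interfere, so $v_2\vDash\Box p_n\wedge\neg\mathsf{B}_{n-1}$; hence $w\vDash\Diamond(\Box p_n\wedge\neg\mathsf{B}_{n-1})$ while $w\not\vDash p_n$, i.e.\ $w\vDash\neg\mathsf{B}_n$.

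The main obstacle I expect is the bookkeeping in the inductive step rather than any deep idea. One must check that the witness $u$ of $\Diamond(\ldots)$ is genuinely a \emph{proper} successor of $w$, so that $p_n$ may be false at $w$ while $\Box p_n$ holds at $u$; that concatenating $w$ with a chain out of $u$ remains a proper chain under transitivity; and that the recursively obtained valuation for $\neg\mathsf{B}_{n-1}$ transports from $\mathfrak{F}|_{v_2}$ to $\mathfrak{F}$ without disturbing either $\neg\mathsf{B}_{n-1}$ (by generated-submodel invariance of truth) or the freshly defined $p_n$ (by disjointness of variables). These are exactly the points at which the precise ``first point is $w$'' reading of rank, and the fact that $\neg\mathsf{B}_{n-1}$ must hold at the very witness $u$, are essential.
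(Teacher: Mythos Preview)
Your argument is correct: the pointwise characterization ``$\neg\mathsf{B}_n$ is satisfiable at $w$ iff $w$ has rank greater than $n$'' is exactly the right induction loading, and the bookkeeping you flag (properness of the witness via $\Box p_n$ versus $\neg p_n$, distinctness in the concatenated chain via transitivity, and transport of the inner valuation by generated-submodel invariance together with variable disjointness) is handled soundly.

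As for comparison with the paper: there is nothing to compare. The paper does not prove this proposition at all; it merely records it as a result established in Segerberg's \emph{Essay in Classical Modal Logic} and moves on. Your inductive proof is the standard one and would serve perfectly well as the omitted argument.
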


\begin{theorem} \label{thm: Segerberg, depth}All transitive logics
of finite depth have the f.m.p. \end{theorem}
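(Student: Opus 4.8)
The plan is to establish the finite model property by filtration. Fix a transitive logic $\mathbf{L}$ of finite depth, so that $\mathsf{B}_{n}\in\mathbf{L}$ for some $n\geqslant1$, and suppose $\phi\notin\mathbf{L}$. First I would produce a model refuting $\phi$ whose underlying frame is transitive and of rank at most $n$. For this, pass to the canonical model $\mathfrak{M}_{\mathbf{L}}$ of $\mathbf{L}$: since $\phi\notin\mathbf{L}$ there is a point $w$ with $\mathfrak{M}_{\mathbf{L}},w\vDash\lnot\phi$, and replacing $\mathfrak{M}_{\mathbf{L}}$ by the generated submodel $\mathfrak{N}=\mathfrak{M}_{\mathbf{L}}|_{w}$ preserves this while keeping the underlying frame transitive (from $\mathbf{K4}\subseteq\mathbf{L}$) and rooted. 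Since the finite-depth formulas are canonical, $\mathsf{B}_{n}$ is valid on the canonical frame of every logic containing it, so by Proposition~\ref{prop: frame cond 4 B_k} the frame of $\mathfrak{N}$ has rank $\leqslant n$.

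Next I would filtrate. Let $\Sigma$ be the closure of the subformulas of $\phi$ under the operations needed for a transitive filtration (subformulas together with $\Box\psi$ for each $\Box\psi$ occurring in $\Sigma$), so that $\Sigma$ is finite. Using the transitive (minimal) filtration, quotient $\mathfrak{N}$ by agreement on $\Sigma$ to obtain a finite model $\mathfrak{N}^{f}$ on a finite transitive frame $\mathfrak{F}^{f}$. The filtration theorem gives $\mathfrak{N}^{f},[w]\vDash\lnot\phi$, so $\phi$ is refuted on $\mathfrak{F}^{f}$. Two things then remain to be checked about $\mathfrak{F}^{f}$. Preserving rank is the easy one: every $\vec{R}^{f}$-chain in the quotient lifts to an $\vec{R}$-chain of the same length in $\mathfrak{N}$, so $\mathfrak{F}^{f}$ again has rank $\leqslant n$, whence $\mathfrak{F}^{f}\vDash\mathsf{B}_{n}$ by Proposition~\ref{prop: frame cond 4 B_k}.

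The main obstacle is showing that the finite frame $\mathfrak{F}^{f}$ validates the whole of $\mathbf{L}$ and not merely $\mathbf{K4B}_{n}$, since filtration does not preserve validity of an arbitrary logic and $\mathbf{L}$ need be neither canonical nor Sahlqvist. This is precisely where the finite-depth hypothesis must be exploited, and I would attack it by induction on the depth $n$. In the base case $n=1$ every rooted frame in sight is a single cluster; an infinite cluster reduces to a finite cluster by a surjection that is easily checked to be a reduction, and validity transfers along reductions, so the finite quotient is again a frame for $\mathbf{L}$. For the inductive step I would decompose a rooted frame of rank $n$ into its root cluster together with the subframes generated by its proper successors, each of rank at most $n-1$; filtrating the root cluster to a finite cluster and the lower subframes by the induction hypothesis, then reassembling, I expect to obtain a finite frame built out of pieces validating $\mathbf{L}$. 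The delicate part of this step is controlling the interaction between the root cluster and the lower blocks so that the reassembled finite frame introduces no new $\mathbf{L}$-refuting valuation, i.e.\ verifying that it is still a frame for $\mathbf{L}$; establishing this cluster-wise, depth-controlled filtration lemma is where essentially all of the work lies, and it is exactly what makes the finite-depth hypothesis indispensable.
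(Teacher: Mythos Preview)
The paper does not prove this theorem at all: it is quoted as a result of Segerberg and simply cited to \cite{Segerberg-essay-MD}. So there is no ``paper's own proof'' to compare against, and the relevant question is whether your proposal is a viable route to Segerberg's theorem.

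Your steps (canonicity of $\mathsf{B}_{n}$, passage to a rooted transitive canonical submodel of rank $\leqslant n$, transitive filtration, preservation of rank under filtration) are all fine. The problem is exactly where you locate it: getting the filtrated frame to validate the \emph{arbitrary} logic $\mathbf{L}\supseteq\mathbf{K4B}_{n}$, not merely $\mathbf{K4B}_{n}$. Your proposed attack on this---an induction on $n$ that decomposes a rooted frame into its root cluster and the subframes generated by proper successors, filtrates the pieces, and reassembles---does not go through as stated. In the base case you lean on the fact that the filtration map on a single cluster is a reduction (p-morphism), which is true; but for rank $>1$ the filtration map is \emph{not} a reduction in general, and applying the induction hypothesis to the lower pieces only tells you those pieces validate $\mathbf{L}$, not that the reassembled frame does. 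An axiom of $\mathbf{L}$ can constrain how the root cluster interacts with several incomparable lower blocks simultaneously, and nothing in your reassembly argument controls this. You yourself flag this as ``the delicate part'' and leave it open; it is not a detail but the whole content of the theorem.

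Segerberg's actual argument avoids this difficulty entirely by proving something stronger: $\mathbf{K4B}_{n}$ is \emph{locally tabular}. Over $\mathbf{K4B}_{n}$ every formula is provably equivalent to one of modal degree at most $n$ (iterating $\Box$ beyond the depth bound yields nothing new), and there are only finitely many such formulas, up to equivalence, in any fixed finite set of variables. Hence for any extension $\mathbf{L}\supseteq\mathbf{K4B}_{n}$ the Lindenbaum--Tarski algebra of $\mathbf{L}$ in the variables of $\phi$ is finite; equivalently, one may filtrate the canonical model through the set of \emph{all} formulas in those variables (finite up to $\mathbf{L}$-equivalence), and the resulting quotient is then a genuine p-morphic image, so its frame validates $\mathbf{L}$. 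The point you are missing is that the finite-depth hypothesis should be spent on bounding modal degree (hence getting local tabularity), not on a geometric decomposition of frames.
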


An \emph{antichain} in a transitive frame $\mathfrak{F}=\left\langle W,R\right\rangle $
is a set  $A\subseteq W$ such that for all $u,v\in A$, $u\neq v$
only if $u\perp_{R}v$. Whenever we speak of an antichain $\{u_{0},\ldots,u_{n}\}$
in a frame, we presuppose that $u_{0},\ldots,u_{n}$ are distinct.
A transitive frame is \emph{of width at most} $n$ ($n\geqslant1$)
if $\left\vert A\right\vert \leqslant n$ for each antichain $A$
in the frame. The following formulas are from \cite{Fine-74-logics-containing-K4-part-1},
where $n\geqslant1$:
\[
\mathsf{Wid}_{n}={
{\textstyle \bigwedge\nolimits _{i\leqslant n}}
}\Diamond p_{i}\rightarrow{
{\textstyle \bigvee\nolimits _{0\leqslant i\neq j\leqslant n}}
}\Diamond(p_{i}\wedge(p_{j}\vee\Diamond p_{j}))\text{.}
\]
A transitive logic is \emph{of width} $n$ ($n\geqslant1$) if it
contains $\mathsf{Wid}_{n}$ but not $\mathsf{Wid}_{n-1}$ (it is
assumed that $\mathsf{Wid}_{0}=\bot$), and is \emph{of finite width}
if it contains $\mathsf{Wid}_{k}$ for a $k\geqslant1$. The following
proposition is from \cite{Fine-74-logics-containing-K4-part-1}:

\begin{proposition} For each rooted transitive frame $\mathfrak{F}$
and each $n\geqslant1$, $\mathfrak{F}\vDash\mathsf{Wid}_{n}$ iff
$\mathfrak{F}$ is of width at most $n$.\label{prop: frame cond 4 Wid_n}
\end{proposition}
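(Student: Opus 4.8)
The statement is a local frame-correspondence, and I would prove it by the usual two-directional, purely semantic argument, establishing each implication by its contrapositive where convenient. Throughout, write $r$ for a root of $\mathfrak{F}=\langle W,R\rangle$ and recall that $\mathsf{Wid}_n$ involves the $n+1$ variables $p_0,\dots,p_n$: the antecedent asserts that each $p_i$ is seen, and the consequent asserts that some $p_i$-point also is, or sees, a $p_j$-point with $i\neq j$.

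For the soundness direction ($\mathfrak{F}$ of width $\leqslant n$ implies $\mathfrak{F}\vDash\mathsf{Wid}_n$), I would fix an arbitrary model $\mathfrak{M}$ on $\mathfrak{F}$ and a point $w$ at which the antecedent holds, and choose for each $i\leqslant n$ a witness $v_i$ with $Rwv_i$ and $\mathfrak{M},v_i\vDash p_i$. The crux is that the $n+1$ points $v_0,\dots,v_n$ cannot form an antichain of size $n+1$: by width $\leqslant n$ it is not the case that they are all distinct and pairwise incomparable. Hence either $v_i=v_j$ for some $i\neq j$, in which case $v_i\vDash p_i\wedge p_j$, or some distinct pair is $R$-comparable, say $Rv_iv_j$, in which case $v_i\vDash p_i\wedge\Diamond p_j$. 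In either case $v_i\vDash p_i\wedge(p_j\vee\Diamond p_j)$ for distinct $i,j$, and since $Rwv_i$ the corresponding disjunct of the consequent holds at $w$.

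For the necessity direction I would argue contrapositively: assume $\mathfrak{F}$ is not of width $\leqslant n$, so it contains an antichain $\{u_0,\dots,u_n\}$ of $n+1$ pairwise incomparable points. I would define the valuation by $V(p_i)=\{u_i\}$ for $i\leqslant n$. Because $n+1\geqslant 2$ and the $u_i$ are pairwise incomparable, none of them can be the root, so by transitivity together with rootedness $Rru_i$ holds for every $i$; thus $r$ satisfies the antecedent. It then remains to check that every disjunct of the consequent fails at $r$: if $\mathfrak{M},r\vDash\Diamond(p_i\wedge(p_j\vee\Diamond p_j))$ with $i\neq j$, then the $p_i$-witness is forced to be $u_i$ (as $p_i$ holds only there), and $u_i\vDash p_j\vee\Diamond p_j$ would give either $u_i=u_j$ or $Ru_iu_j$, each contradicting $u_i\perp_R u_j$. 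Hence $\mathfrak{M},r\not\vDash\mathsf{Wid}_n$, so $\mathfrak{F}\not\vDash\mathsf{Wid}_n$.

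The only genuinely delicate points are bookkeeping ones, and I expect them to be the main things to get right rather than any deep obstacle: first, correctly splitting the soundness case into \emph{two witnesses coincide} versus \emph{two distinct witnesses are comparable}, since the two produce different disjuncts ($p_i\wedge p_j$ against $p_i\wedge\Diamond p_j$); and second, justifying in the necessity direction that each $u_i$ is a proper successor of $r$, which is where rootedness and transitivity are jointly used and where the hypothesis $n\geqslant 1$ (hence $n+1\geqslant 2$) is needed to rule out a witness being the root itself.
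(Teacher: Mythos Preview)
Your argument is correct and is the standard semantic correspondence proof. Note, however, that the paper does not actually prove this proposition: it is stated as a known result from Fine's \emph{Logics containing K4, Part I}, with no proof supplied. So there is nothing to compare against beyond saying that your write-up is essentially the proof one would expect (and the one Fine gives).

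One small remark on your bookkeeping: in the necessity direction you invoke ``transitivity together with rootedness'' to get $Rru_i$, but transitivity is not needed there---rootedness alone gives $Rru_i$ once you have established $u_i\neq r$. Transitivity is, of course, used implicitly in the soundness direction (so that $\Diamond p_j$ at $v_i$ really is witnessed by a single $R$-step), but that is already baked into the semantics over a transitive frame.
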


\section{Criteria of Finite Axiomatizability\label{sect: seq of frames and seq of lists}}

In this section, we present necessary and sufficent conditions for
all extensions of a modal logic $\mathbf{L}$ to be finitely axiomatizable
over $\mathbf{L}$.  For each family $\{\mathbf{L}_{i}\}_{i\in I}$
of modal logics, $\bigoplus\nolimits _{i\in I}\mathbf{L}_{i}$ is
the smallest modal logic including $\bigcup\nolimits _{i\in I}\mathbf{L}_{i}$.
The following is a well-known theorem from Tarski (see, e.g., \cite{Chagrov-Zakharyaschev-ml-bk-97}):

\begin{theorem} Let $\mathbf{L}$ and $\mathbf{L}^{\prime}$ be any
modal logics such that $\mathbf{L}\subseteq\mathbf{L}^{\prime}$.
Then $\mathbf{L}^{\prime}$ is finitely axiomatizable over $\mathbf{L}$
iff there is no infinite ascending $\subset$-chain $\mathbf{L}_{0}\subset\mathbf{L}_{1}\subset\mathbf{L}_{2}\subset\cdots$
of extensions of $\mathbf{L}$ such that $\mathbf{L}^{\prime}=
{\textstyle \bigoplus\nolimits _{i\in\omega}}
\mathbf{L}_{i}$.\label{thm: Tarski's criterion of f. ax.} \end{theorem}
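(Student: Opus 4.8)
The plan is to prove both directions of the biconditional, treating this as essentially a compactness-style argument about the lattice of extensions of $\mathbf{L}$. Throughout I would keep in mind that ``finitely axiomatizable over $\mathbf{L}$'' means $\mathbf{L}^{\prime}=\mathbf{L}\oplus\Delta$ for some finite $\Delta\subseteq\mathbf{L}^{\prime}$, and that a single formula suffices whenever a finite set does (take the conjunction).

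For the contrapositive of the left-to-right direction, I would suppose $\mathbf{L}^{\prime}$ is finitely axiomatizable over $\mathbf{L}$, say $\mathbf{L}^{\prime}=\mathbf{L}\oplus\phi$, and show no such strictly ascending chain can have $\mathbf{L}^{\prime}$ as its join. Given any ascending chain $\{\mathbf{L}_i\}_{i\in\omega}$ of extensions of $\mathbf{L}$ with $\bigoplus_{i\in\omega}\mathbf{L}_i=\mathbf{L}^{\prime}$, the key observation is that $\phi\in\mathbf{L}^{\prime}=\bigoplus_{i\in\omega}\mathbf{L}_i$. Since the join $\bigoplus_{i\in\omega}\mathbf{L}_i$ is the smallest modal logic containing $\bigcup_{i\in\omega}\mathbf{L}_i$, and a derivation of $\phi$ uses only finitely many axioms, $\phi$ is already derivable from finitely many members of $\bigcup_{i\in\omega}\mathbf{L}_i$; because the chain is ascending, all these lie in a single $\mathbf{L}_N$. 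Hence $\phi\in\mathbf{L}_N$, so $\mathbf{L}^{\prime}=\mathbf{L}\oplus\phi\subseteq\mathbf{L}_N\subseteq\mathbf{L}^{\prime}$, forcing $\mathbf{L}_N=\mathbf{L}_{N+1}=\cdots=\mathbf{L}^{\prime}$; thus the chain is not strictly ascending with join $\mathbf{L}^{\prime}$, as required.

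For the right-to-left direction I would argue by contraposition: assuming $\mathbf{L}^{\prime}$ is \emph{not} finitely axiomatizable over $\mathbf{L}$, I would build the desired strictly ascending chain. The plan is to enumerate $\mathbf{L}^{\prime}\setminus\mathbf{L}=\{\psi_0,\psi_1,\psi_2,\ldots\}$ (taking the infinite case, since in the finite case $\mathbf{L}^{\prime}$ would be finitely axiomatizable over $\mathbf{L}$) and set $\mathbf{L}_i=\mathbf{L}\oplus\{\psi_0,\ldots,\psi_i\}$. Each $\mathbf{L}_i$ is an extension of $\mathbf{L}$ contained in $\mathbf{L}^{\prime}$, the chain is ascending, and its join is exactly $\mathbf{L}\oplus(\mathbf{L}^{\prime}\setminus\mathbf{L})=\mathbf{L}^{\prime}$. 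To extract a \emph{strictly} ascending subchain I would select indices $i_0<i_1<\cdots$ at which a genuinely new formula is added: if from some point on no $\mathbf{L}_i$ properly contained its predecessor, the chain would stabilize at some $\mathbf{L}_N$, giving $\mathbf{L}^{\prime}=\mathbf{L}_N=\mathbf{L}\oplus\{\psi_0,\ldots,\psi_N\}$, contradicting non-finite-axiomatizability. Passing to the subchain $\mathbf{L}_{i_0}\subset\mathbf{L}_{i_1}\subset\cdots$ preserves the join, yielding the infinite strictly ascending chain.

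The main obstacle, and the step deserving the most care, is the extraction of a \emph{strictly} ascending subchain whose join is still $\mathbf{L}^{\prime}$: one must confirm both that infinitely many proper-inclusion steps exist (which is exactly where the non-finite-axiomatizability hypothesis is used) and that deleting the redundant intermediate logics does not shrink the join below $\mathbf{L}^{\prime}$. The latter follows because each omitted $\psi_j$ equals some $\psi_{i_k}$ already captured or is derivable from the retained finite sets, so $\bigcup_k\mathbf{L}_{i_k}$ still contains every $\psi_j$; but this bookkeeping should be stated explicitly rather than waved through. The finiteness-of-derivations principle underlying the forward direction is entirely routine, so I expect no difficulty there.
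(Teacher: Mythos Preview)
The paper does not give its own proof of this theorem: it is stated as ``a well-known theorem from Tarski'' with a reference to \cite{Chagrov-Zakharyaschev-ml-bk-97}, and the text moves on immediately. So there is no paper proof to compare against, and the question becomes simply whether your argument is sound. It is. The forward direction is the standard compactness observation (any derivation of the single axiomatizing formula $\phi$ from $\bigcup_i\mathbf{L}_i$ uses finitely many premises, hence lands in some $\mathbf{L}_N$, forcing the chain to stabilize), and the reverse direction builds the obvious increasing sequence $\mathbf{L}_i=\mathbf{L}\oplus\{\psi_0,\ldots,\psi_i\}$ and thins it to a strict chain.

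Two small points of presentation. First, what you label ``the contrapositive of the left-to-right direction'' is in fact just the direct proof of that direction; you assume finite axiomatizability and derive the nonexistence of the chain. Second, your justification that the thinned subchain has the same join is phrased a bit awkwardly (``each omitted $\psi_j$ equals some $\psi_{i_k}$''). The cleaner way to say it is that between consecutive strict-jump indices $i_k<i_{k+1}$ the chain is constant, so every $\mathbf{L}_j$ is contained in some $\mathbf{L}_{i_k}$; i.e., the subchain is cofinal in the original, and cofinal subchains have the same join. Neither point affects correctness.
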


Let $\{\mathfrak{F}_{i}\}_{i\in\omega}$ be any infinite sequence
of frames. $\{\mathfrak{F}_{i}\}_{i\in\omega}$ is \emph{backward
irreducible} (\emph{forward-backward irreducible}, or simply \emph{irreducible})
if for all $i,j\in I$ with $i<j$ ($i\neq j$), no point-generated
subframe of $\mathfrak{F}_{j}$ is reducible to $\mathfrak{F}_{i}$.
For each class $
\mathscr{C}
$ of frames, $\{\mathfrak{F}_{i}\}_{i\in\omega}$ is a \emph{backward
irreducible (or irreducible) sequence w.r.t.\ }$
\mathscr{C}
$ if $\{\mathfrak{F}_{i}\}_{i\in\omega}$ is backward irreducible (or
irreducible) and $\mathfrak{F}_{i}\in
\mathscr{C}
$ for each $i\in\omega$. A modal logic $\mathbf{L}$ is \emph{characterized}
by a class $\mathscr{C}$ of frames if $\mathbf{L}=\mathbf{Log}(\mathscr{C})$.

The following theorem provides a sufficient condition of finite axiomatizability
in terms of backward irreducible sequences, and is proved by applying
Theorem \ref{thm: Tarski's criterion of f. ax.}.

\begin{theorem} Let $\mathbf{L}$ be a modal logic, and let $\mathscr{C}$
be a class of frames for $\mathbf{L}$ such that each extension of
$\mathbf{L}$ is characterized by a subclass of $\mathscr{C}$. Then
all extensions of $\mathbf{L}$ are finitely axiomatizable over $\mathbf{L}$
if there is no backward irreducible sequence w.r.t.\ $\mathscr{C}$.\label{thm: all exts of L are fa, iff}
\end{theorem}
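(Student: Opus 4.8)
The plan is to prove the contrapositive with the help of Theorem~\ref{thm: Tarski's criterion of f. ax.}: assuming that some extension $\mathbf{L}'$ of $\mathbf{L}$ fails to be finitely axiomatizable over $\mathbf{L}$, I would manufacture a backward irreducible sequence w.r.t.\ $\mathscr{C}$, contradicting the hypothesis. By Theorem~\ref{thm: Tarski's criterion of f. ax.}, the failure of finite axiomatizability hands me an infinite strictly ascending chain $\mathbf{L}_0 \subset \mathbf{L}_1 \subset \mathbf{L}_2 \subset \cdots$ of extensions of $\mathbf{L}$ with $\mathbf{L}' = \bigoplus_{i\in\omega}\mathbf{L}_i$. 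The strictness of each inclusion $\mathbf{L}_i \subset \mathbf{L}_{i+1}$ is what I would exploit: for each $i$ it lets me fix a separating formula $\phi_i \in \mathbf{L}_{i+1}\setminus\mathbf{L}_i$.

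Next I would turn these logics into frames. Since each $\mathbf{L}_i$ is an extension of $\mathbf{L}$, the hypothesis on $\mathscr{C}$ provides a subclass $\mathscr{C}_i \subseteq \mathscr{C}$ with $\mathbf{L}_i = \mathbf{Log}(\mathscr{C}_i)$. Because $\phi_i \notin \mathbf{L}_i = \mathbf{Log}(\mathscr{C}_i)$, some frame $\mathfrak{F}_i \in \mathscr{C}_i$ refutes $\phi_i$, i.e.\ $\mathfrak{F}_i \not\vDash \phi_i$; and $\mathfrak{F}_i \in \mathscr{C}$ since $\mathscr{C}_i \subseteq \mathscr{C}$. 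This yields a sequence $\{\mathfrak{F}_i\}_{i\in\omega}$ of frames, all lying in $\mathscr{C}$, which I claim is backward irreducible. The two properties of $\mathfrak{F}_i$ that I would carry forward are $\mathfrak{F}_i \not\vDash \phi_i$ and $\mathfrak{F}_i \vDash \mathbf{L}_i$, the latter holding because $\mathfrak{F}_i \in \mathscr{C}_i$ and $\mathbf{L}_i = \mathbf{Log}(\mathscr{C}_i)$.

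To verify backward irreducibility, fix $i < j$ and suppose toward a contradiction that some point-generated subframe $\mathfrak{G}$ of $\mathfrak{F}_j$ is reducible to $\mathfrak{F}_i$. First I would observe that $\phi_i$ is valid in $\mathfrak{F}_j$: since the chain is ascending, $\phi_i \in \mathbf{L}_{i+1} \subseteq \mathbf{L}_j$, and $\mathfrak{F}_j \vDash \mathbf{L}_j$. The standard preservation theorems then transport this validity downward along the two constructions: validity passes from $\mathfrak{F}_j$ to its generated subframe $\mathfrak{G}$, and from $\mathfrak{G}$ along the reduction to $\mathfrak{F}_i$, whence $\mathfrak{F}_i \vDash \phi_i$, contradicting $\mathfrak{F}_i \not\vDash \phi_i$. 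Hence no point-generated subframe of $\mathfrak{F}_j$ reduces to $\mathfrak{F}_i$, so $\{\mathfrak{F}_i\}_{i\in\omega}$ is a backward irreducible sequence w.r.t.\ $\mathscr{C}$, the desired contradiction.

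The conceptual core, and the only place where care is needed, is the chain of validity-preservation invocations in the last paragraph: one must get the directions right so that validity of $\phi_i$ flows from $\mathfrak{F}_j$ through the generated subframe and then through the reduction onto $\mathfrak{F}_i$. Everything else, namely extracting the chain, choosing the separating $\phi_i$, and locating the refuting frame $\mathfrak{F}_i$ inside $\mathscr{C}_i$, is a direct unwinding of the definitions and of Theorem~\ref{thm: Tarski's criterion of f. ax.}, and I would not expect genuine obstacles there.
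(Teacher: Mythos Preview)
Your proof is correct and follows essentially the same route as the paper's: contrapositive via Tarski's criterion, separating formulas $\phi_i\in\mathbf{L}_{i+1}\setminus\mathbf{L}_i$, refuting frames $\mathfrak{F}_i\in\mathscr{C}$ with $\mathfrak{F}_i\vDash\mathbf{L}_i$, and then the observation that $\mathfrak{F}_j\vDash\phi_i$ for $i<j$ forces backward irreducibility. You are in fact more explicit than the paper about the validity-preservation step through generated subframes and reductions, which the paper leaves implicit (and where the paper's displayed line ``$\mathfrak{F}_{j}\nvDash\phi_{i}$'' appears to be a typo for $\mathfrak{F}_{j}\vDash\phi_{i}$).
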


\begin{proof} Suppose that $\mathbf{L}^{\prime}$ extends $\mathbf{L}$
but is not finitely axiomatizable over $\mathbf{L}$. By Theorem \ref{thm: Tarski's criterion of f. ax.},
there is an infinite ascending $\subset$-chain $\mathbf{L}_{0}\subset\mathbf{L}_{1}\subset\cdots$
of extensions of $\mathbf{L}$, and then for each $i\in\omega$, there
is a $\phi_{i}\in\mathbf{L}_{i+1}-\mathbf{L}_{i}$, and hence by hypothesis,
$\mathfrak{F}_{i}\nvDash\phi_{i}$ for a member $\mathfrak{F}_{i}$
of $
\mathscr{C}
$ such that $\mathfrak{F}_{i}\vDash\mathbf{L}_{i}$, which implies
that for each $i,j\in\omega$ with $i<j$, $\mathfrak{F}_{i}\nvDash\phi_{i}$
and $\mathfrak{F}_{j}\nvDash\phi_{i}$. Therefore, $\{\mathfrak{F}_{i}\}_{i\in\omega}$
is a backward irreducible sequence w.r.t.\ $\mathscr{C}$.\end{proof}

From now on, whenever we speak of an backward irreducible (or irreducible)
sequence \emph{of such and such frames} (\emph{for} $\mathbf{L}$),
we mean an backward irreducible (or irreducible) sequence w.r.t.\ the
class of such and such frames (for $\mathbf{L}$). The following
corollary is often applied in studies of finite axiomatizability of
modal logics whose extensions have the f.m.p. (see, e.g., \cite{Fine-71-logic-containing-S4-3},
\cite{Nagle-k5-1} and \cite{xu-ext-K4.3})

\begin{corollary} Let $\mathbf{L}$ be any modal logic whose extensions
all have the f.m.p. Then all extensions of $\mathbf{L}$ are finitely
axiomatizable over $\mathbf{L}$ if there is no backward irreducible
sequence of finite rooted frames for $\mathbf{L}$.\label{thm: ext of L f.a. iff no funny seq 4 L}
\end{corollary}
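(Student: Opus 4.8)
The plan is to derive this corollary directly from Theorem~\ref{thm: all exts of L are fa, iff}, taking $\mathscr{C}$ to be the class of all finite rooted frames for $\mathbf{L}$. To invoke that theorem I must verify its two hypotheses: that every member of $\mathscr{C}$ is a frame for $\mathbf{L}$, and that each extension of $\mathbf{L}$ is characterized by a subclass of $\mathscr{C}$. The first is immediate from the choice of $\mathscr{C}$, so the substance of the argument lies in the second.

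First I would fix an arbitrary extension $\mathbf{L}^{\prime}$ of $\mathbf{L}$ and, using the hypothesis that $\mathbf{L}^{\prime}$ has the f.m.p., obtain $\mathbf{L}^{\prime}=\mathbf{Log}(\mathscr{D})$ where $\mathscr{D}$ is the class of all finite frames for $\mathbf{L}^{\prime}$. Next I would pass from $\mathscr{D}$ to the subclass $\mathscr{D}^{\mathrm{r}}\subseteq\mathscr{D}$ of finite \emph{rooted} frames for $\mathbf{L}^{\prime}$, invoking the standard preservation facts the paper assumes: validity is preserved under point-generated subframes, and a formula is valid on a frame iff it is valid on every point-generated subframe of that frame. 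These together yield $\mathbf{Log}(\mathscr{D})=\mathbf{Log}(\mathscr{D}^{\mathrm{r}})$, so that $\mathbf{L}^{\prime}=\mathbf{Log}(\mathscr{D}^{\mathrm{r}})$. Finally, since $\mathbf{L}\subseteq\mathbf{L}^{\prime}$, every frame for $\mathbf{L}^{\prime}$ is a frame for $\mathbf{L}$, whence $\mathscr{D}^{\mathrm{r}}\subseteq\mathscr{C}$; thus $\mathbf{L}^{\prime}$ is characterized by a subclass of $\mathscr{C}$, as required.

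With both hypotheses in place, the conditional supplied by Theorem~\ref{thm: all exts of L are fa, iff} applies verbatim: if there is no backward irreducible sequence w.r.t.\ $\mathscr{C}$ — that is, no backward irreducible sequence of finite rooted frames for $\mathbf{L}$ — then all extensions of $\mathbf{L}$ are finitely axiomatizable over $\mathbf{L}$, which is exactly the claim.

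The step I expect to demand the most care is the reduction to rooted frames, namely the equality $\mathbf{Log}(\mathscr{D})=\mathbf{Log}(\mathscr{D}^{\mathrm{r}})$. The inclusion $\mathbf{Log}(\mathscr{D})\subseteq\mathbf{Log}(\mathscr{D}^{\mathrm{r}})$ is clear because $\mathscr{D}^{\mathrm{r}}\subseteq\mathscr{D}$; the reverse inclusion is where I must use that any refutation of a formula on a finite frame localizes to some finite point-generated subframe, so that every non-theorem of $\mathbf{L}^{\prime}$ is already refuted somewhere in $\mathscr{D}^{\mathrm{r}}$. This is routine given the assumed results on generated subframes, but it is the conceptual crux that converts the f.m.p.\ into characterization by finite rooted frames, and hence makes the hypothesis of Theorem~\ref{thm: all exts of L are fa, iff} applicable with $\mathscr{C}$ taken to be finite rooted frames rather than arbitrary frames.
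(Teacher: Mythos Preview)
Your proposal is correct and follows essentially the same approach as the paper: take $\mathscr{C}$ to be the class of finite rooted frames for $\mathbf{L}$, observe that every extension of $\mathbf{L}$ is characterized by a subclass of $\mathscr{C}$, and apply Theorem~\ref{thm: all exts of L are fa, iff}. The paper compresses the characterization step into a single sentence (``It follows from hypothesis that each extension of $\mathbf{L}$ is characterized by a subclass of $\mathscr{C}$''), whereas you spell out the passage from finite frames to finite rooted frames via generated subframes; this extra detail is exactly what the paper leaves implicit.
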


\begin{proof} Let $
\mathscr{C}
$ be the class of finite rooted frames for $\mathbf{L}$. It follows
from hypothesis that each extension of $\mathbf{L}$ is characterized
by a subclass of $
\mathscr{C}
$. Hence the conclusion follows from Theorem \ref{thm: all exts of L are fa, iff}.
\end{proof}

In the following, we prove the converse of Corollary \ref{thm: ext of L f.a. iff no funny seq 4 L},
and combined it with Corollary \ref{thm: ext of L f.a. iff no funny seq 4 L}
to get our final criterion of finite axiomatizability in terms of
(backward) irreducible sequences. Let $\mathfrak{F}=\left\langle W,R\right\rangle $
be a finite rooted transitive frame, where $W=\{w_{0},\ldots,w_{n}\}$
with $w_{0}$ to be a root of $\mathfrak{F}$, and $w_{0},\ldots,w_{n}$
to be all distinct. We call $\left\langle w_{0},\ldots,w_{n}\right\rangle $
an \emph{ordering of points in} $\mathfrak{F}$. Let $p_{0},\ldots,p_{n}$
be distinct propositional letters, and let us call a conjunction of
the following formulas a \emph{frame formula for} $\mathfrak{F}$
\emph{w.r.t.}\ $\left\langle w_{0},\ldots,w_{n}\right\rangle $:
\begin{itemize}
\item $p_{0}$,
\item $\Box(p_{0}\vee\cdots\vee p_{n})$,
\item $
{\textstyle \bigwedge}
\{(p_{i}\rightarrow\lnot p_{j})\wedge\Box(p_{i}\rightarrow\lnot p_{j}):i,j\leqslant n$ and $i\neq j\}$,
\item $
{\textstyle \bigwedge}
\{(p_{i}\rightarrow\Diamond p_{j})\wedge\Box(p_{i}\rightarrow\Diamond p_{j}):i,j\leqslant n$ and $Rw_{i}w_{j}\}$,
\item $
{\textstyle \bigwedge}
\{(p_{i}\rightarrow\lnot\Diamond p_{j})\wedge\Box(p_{i}\rightarrow\lnot\Diamond p_{j}):i,j\leqslant n$ and not $Rw_{i}w_{j}\}$.
\end{itemize}
\noindent A \emph{frame formula}\footnote{A frame formula for $\mathfrak{F}$ is also known as a \emph{Jankov-Fine
formula} for $\mathfrak{F}$ (see \cite{Blackburn-de-Rijke-ml-book}).
The term ``frame formula”\ goes back to \cite{Fine-74-ascending-chain-S4-logics}.}\emph{ for} $\mathfrak{F}$ is a frame formula for $\mathfrak{F}$
w.r.t.\ an ordering $\left\langle u_{0},\ldots,u_{n}\right\rangle $
of points in $\mathfrak{F}$, where $u_{0}$ is a root. 

\begin{lemma} \label{Claim-JF-Sat_at_root}Let $\mathfrak{F}=\left\langle W,R\right\rangle $
be a finite rooted transitive frame, for which $\phi$ is a frame
formula w.r.t.\ an ordering $\left\langle w_{0},\ldots,w_{n}\right\rangle $
of points in $\mathfrak{F}$.\ Then $\phi$ is  satisfiable in $\mathfrak{F}$
at its root $w_{0}$. \end{lemma}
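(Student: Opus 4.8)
The plan is to exhibit an explicit model on $\mathfrak{F}$ that witnesses the satisfiability of $\phi$ at $w_0$, by letting each propositional letter $p_i$ be true at exactly the point $w_i$. Concretely, I would define a valuation $V$ on $\mathfrak{F}$ by setting $V(p_i)=\{w_i\}$ for each $i\leqslant n$, and let $\mathfrak{M}=\langle W,R,V\rangle$ be the resulting model. The claim is then that $\mathfrak{M},w_0\vDash\phi$, which I verify conjunct-by-conjunct against the five clauses defining a frame formula.

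The verification proceeds as follows. Since $w_0\in V(p_0)$ by definition, the first conjunct $p_0$ holds at $w_0$. For $\Box(p_0\vee\cdots\vee p_n)$: every point of $W$ is some $w_i$ and hence satisfies the corresponding $p_i$, so the disjunction is true throughout $W$, a fortiori at every successor of $w_0$. For the third clause, the points $w_0,\ldots,w_n$ are pairwise distinct and each $V(p_i)$ is the singleton $\{w_i\}$, so no point satisfies two distinct letters; thus $p_i\rightarrow\lnot p_j$ holds everywhere (for $i\neq j$), and so do its boxed versions. For the fourth clause, suppose $Rw_iw_j$; the only point satisfying $p_i$ is $w_i$, and from $w_i$ we have $Rw_iw_j$ with $w_j\vDash p_j$, so $w_i\vDash\Diamond p_j$, giving $p_i\rightarrow\Diamond p_j$ at every point, and the boxed form follows since the implication is universally true. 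For the fifth clause, suppose not $Rw_iw_j$; the only potential witness for $\Diamond p_j$ from $w_i$ would be $w_j$ itself, but $w_i$ does not see $w_j$, so $w_i\nvDash\Diamond p_j$, whence $p_i\rightarrow\lnot\Diamond p_j$ holds universally together with its boxed version. In each case the boxed conjunct reduces to the observation that the relevant unboxed implication is valid at every point of $W$, so it is in particular forced at every $R$-successor of $w_0$.

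The only point requiring care is the treatment of $\Diamond$ in the fourth and fifth clauses, where one must check that $w_i$ is the \emph{unique} point satisfying $p_i$, so that evaluating $p_i\rightarrow\Diamond p_j$ at an arbitrary point reduces to evaluating $\Diamond p_j$ at $w_i$; this is exactly where the choice $V(p_i)=\{w_i\}$ does the work, pinning down both the antecedent and the potential diamond-witness. I expect no genuine obstacle here: the construction is the standard one underlying Jankov--Fine formulas, and the transitivity of $\mathfrak{F}$ is not even needed for satisfiability at the root (it becomes relevant only for the converse reduction property, which is not part of this statement). Assembling the five verifications shows $\mathfrak{M},w_0\vDash\phi$, and since $\phi$ is satisfied in the model $\mathfrak{M}$ on $\mathfrak{F}$ at $w_0$, it is satisfiable in $\mathfrak{F}$ at $w_0$, as required.
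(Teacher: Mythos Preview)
Your proposal is correct and follows exactly the paper's approach: define the valuation $V(p_i)=\{w_i\}$ for each $i\leqslant n$ and verify that the resulting model satisfies $\phi$ at $w_0$. The paper dismisses the conjunct-by-conjunct check as routine, whereas you spell it out, but the construction and the argument are identical.
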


\begin{proof} Let $\mathfrak{M}=\left\langle \mathfrak{F},V\right\rangle $
where $V(p_{i})=\{w_{i}\}$ for each $i\leqslant n$. It is routine
to check that $\mathfrak{M},w_{0}\vDash\phi$. \end{proof}

The following is Lemma 3.20 from \cite{Blackburn-de-Rijke-ml-book},
and the proof is left to the reader.

\begin{lemma} \label{lem-JF-Property}Let $\mathfrak{F}$ be a finite
rooted transitive frame, for which $\phi$ is a frame formula, and
let $\mathfrak{G}=\left\langle U,S\right\rangle $ be any transitive
frame with $u\in U$. Then $\phi$ is satisfiable in $\mathfrak{G}$
at $u$ iff $\mathfrak{G}|_{u}$ is reducible to $\mathfrak{F}$.\end{lemma}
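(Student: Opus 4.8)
The plan is to prove the biconditional by treating the two directions separately, using throughout the valuation $V$ with $V(p_i)=\{w_i\}$ supplied by Lemma~\ref{Claim-JF-Sat_at_root} and the standard preservation-under-reductions theorem. Write $\mathfrak{G}|_{u}=\left\langle U',S'\right\rangle$, and recall that $U'$ is $S$-closed (it is generated by $u$), so $S'=S\cap(U'\times U')$. This closure is exactly what will let the $\Box$-conjuncts of $\phi$ transmit their content from $u$ to every point of $U'$, and it is the feature I would exploit in both directions.

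For the easier direction ($\Leftarrow$), suppose $f$ reduces $\mathfrak{G}|_{u}$ to $\mathfrak{F}$. Since $\mathfrak{G}|_{u}$ is generated by $u$ and $f$ is onto the $w_0$-rooted frame $\mathfrak{F}$, the image $\mathfrak{F}$ is generated by $f(u)$, so $f(u)$ is a root and therefore $f(u)\sim_R w_0$. As every permutation of a cluster is an automorphism of $\mathfrak{F}$, I may compose $f$ with the transposition swapping $w_0$ and $f(u)$ and so assume $f(u)=w_0$. Pulling the valuation back by $V'(p_i)=f^{-1}(w_i)$ makes $f$ a model reduction from $\left\langle\mathfrak{G}|_{u},V'\right\rangle$ onto $\left\langle\mathfrak{F},V\right\rangle$, whence $\left\langle\mathfrak{G}|_{u},V'\right\rangle,u\vDash\phi$ iff $\left\langle\mathfrak{F},V\right\rangle,w_0\vDash\phi$; the right side holds by Lemma~\ref{Claim-JF-Sat_at_root}. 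Extending $V'$ arbitrarily to all of $U$ and using preservation of truth under generated submodels then yields satisfiability of $\phi$ at $u$ in $\mathfrak{G}$.

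For the substantive direction ($\Rightarrow$), fix $\mathfrak{M}=\left\langle\mathfrak{G},V\right\rangle$ with $\mathfrak{M},u\vDash\phi$. The key preliminary step is propagation: the conjunct $p_0$ together with $\Box(p_0\vee\cdots\vee p_n)$ and the exclusivity conjuncts (in both their bare and boxed forms) force every point of $U'$ to satisfy \emph{exactly one} $p_i$. I would then define $f(v)=w_i$ for the unique $i$ with $\mathfrak{M},v\vDash p_i$, which is total and well defined on $U'$ and gives $f(u)=w_0$ immediately. The three reduction conditions are read off the remaining conjuncts: for the forth condition, if $Svv'$ with $v\vDash p_i$ and $v'\vDash p_j$, then a failure $\lnot Rw_iw_j$ would make $\Box(p_i\rightarrow\lnot\Diamond p_j)$ available at $u$, hence $p_i\rightarrow\lnot\Diamond p_j$ at $v$, contradicting $v'\vDash p_j$; the back condition follows dually from $\Box(p_i\rightarrow\Diamond p_j)$ whenever $Rw_iw_j$, with the witness lying in $U'$ by $S$-closure; and surjectivity follows from $f(u)=w_0$ together with the back condition applied at $u$, since a root of a transitive frame sees every other point directly.

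The main obstacle I anticipate is not any one step but the bookkeeping that guarantees each conjunct of $\phi$ is \emph{available throughout} $U'$ and not merely at $u$: every relational fact about $\mathfrak{F}$ is encoded twice, once as a bare implication and once under a $\Box$, precisely so that, combined with the $S$-closure of $U'$, it holds at $u$ and at all of its successors. Getting the correspondences right—positive conjuncts driving the back condition, negative conjuncts driving the forth condition, and the diagonal case $i=j$ (reflexive versus irreflexive $w_i$) treated uniformly—is where care is required. The only other delicate point is the root-cluster adjustment in the ($\Leftarrow$) direction, which the within-cluster automorphism disposes of cleanly.
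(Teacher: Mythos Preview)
Your argument is correct and is exactly the standard proof of the Jankov--Fine property (define the map by the unique satisfied $p_i$, then read off forth, back, and surjectivity from the corresponding conjuncts; for the converse, pull back the canonical valuation along the reduction). The paper does not actually give a proof of this lemma: it simply cites it as Lemma~3.20 of \cite{Blackburn-de-Rijke-ml-book} and leaves the details to the reader, so there is nothing to compare against beyond noting that your write-up matches that reference's approach.

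One cosmetic remark: in the $(\Rightarrow)$ direction you reuse the letter $V$ for the valuation on $\mathfrak{G}$ after having reserved it in the opening sentence for the canonical valuation on $\mathfrak{F}$; renaming one of them would avoid a minor clash, but the mathematics is unaffected.
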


\begin{proposition} Let $\{\mathfrak{F}_{i}\}_{i\in\omega}$ be an
irreducible sequence of finite rooted transitive frames. Then there
is a continuum of extensions of $\mathbf{L}=\mathbf{Log}(\{\mathfrak{F}_{i}\}_{i\in\omega})$.\label{lem: (Fn) is an odd seq}
\end{proposition}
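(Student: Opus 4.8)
The plan is to produce a continuum of distinct extensions by associating to each subset $S \subseteq \omega$ a logic, and then showing these logics are pairwise distinct. For each $S \subseteq \omega$, define $\mathbf{L}_S = \mathbf{Log}(\{\mathfrak{F}_i : i \in S\})$, the logic characterized by the subfamily indexed by $S$. Each such $\mathbf{L}_S$ contains $\mathbf{L} = \mathbf{Log}(\{\mathfrak{F}_i\}_{i\in\omega})$ since a smaller class of frames validates a larger logic, so every $\mathbf{L}_S$ is an extension of $\mathbf{L}$. Since $|\mathcal{P}(\omega)| = 2^{\aleph_0}$, it suffices to show that the map $S \mapsto \mathbf{L}_S$ is injective, and the conclusion of a continuum of extensions follows immediately.

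To detect membership in these logics I would use frame formulas as separating witnesses. For each $i$, let $\phi_i$ be a frame formula for $\mathfrak{F}_i$ (well-defined since each $\mathfrak{F}_i$ is a finite rooted transitive frame), and consider its negation $\lnot\phi_i$. The key claim is that for each $j \in \omega$, $\lnot\phi_i \in \mathbf{Log}(\{\mathfrak{F}_j\})$ if and only if $i \neq j$ (equivalently $\lnot \phi_i$ fails to be valid in $\mathfrak{F}_j$ exactly when $j = i$). By Lemma \ref{lem-JF-Property}, $\phi_i$ is satisfiable in a transitive frame $\mathfrak{G}$ at a point $u$ iff $\mathfrak{G}|_u$ is reducible to $\mathfrak{F}_i$; so $\mathfrak{F}_j \nvDash \lnot\phi_i$ iff $\phi_i$ is satisfiable somewhere in $\mathfrak{F}_j$ iff some point-generated subframe $\mathfrak{F}_j|_u$ is reducible to $\mathfrak{F}_i$. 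When $i = j$, Lemma \ref{Claim-JF-Sat_at_root} guarantees $\phi_i$ is satisfiable in $\mathfrak{F}_i$ at its root, so $\mathfrak{F}_i \nvDash \lnot\phi_i$. When $i \neq j$, irreducibility of the sequence (in the forward-backward sense, which covers both $i<j$ and $j<i$) ensures no point-generated subframe of $\mathfrak{F}_j$ is reducible to $\mathfrak{F}_i$, so $\phi_i$ is nowhere satisfiable in $\mathfrak{F}_j$, whence $\mathfrak{F}_j \vDash \lnot\phi_i$.

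With this separation in hand, injectivity is straightforward. Suppose $S \neq S'$; without loss of generality pick $i \in S \setminus S'$. Then $\lnot\phi_i$ is valid in every $\mathfrak{F}_j$ with $j \in S'$ (since each such $j$ satisfies $j \neq i$), so $\lnot\phi_i \in \mathbf{L}_{S'}$. On the other hand, $i \in S$ and $\mathfrak{F}_i \nvDash \lnot\phi_i$, so $\lnot\phi_i \notin \mathbf{L}_S$. Hence $\mathbf{L}_S \neq \mathbf{L}_{S'}$, establishing injectivity and thus the continuum of extensions.

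The main obstacle is making sure the forward-backward (full) irreducibility hypothesis is deployed correctly: the definition of an irreducible sequence forbids reductions from point-generated subframes of $\mathfrak{F}_j$ to $\mathfrak{F}_i$ for \emph{all} $i \neq j$, not merely for $i < j$, and it is precisely this symmetric strength—stronger than backward irreducibility—that is needed to guarantee $\mathfrak{F}_j \vDash \lnot\phi_i$ whenever $j \neq i$ regardless of the order of $i$ and $j$. I would verify carefully that the separating argument uses only this stated property and that the frame formulas $\phi_i$ are genuinely available for each index, as secured by the finiteness and transitivity of the frames together with Lemmas \ref{Claim-JF-Sat_at_root} and \ref{lem-JF-Property}.
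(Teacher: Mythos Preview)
Your proposal is correct and follows essentially the same approach as the paper: associate to each subset $S\subseteq\omega$ the logic $\mathbf{L}_S=\mathbf{Log}(\{\mathfrak{F}_i:i\in S\})$, and use the frame formulas $\phi_i$ together with Lemmas~\ref{Claim-JF-Sat_at_root} and~\ref{lem-JF-Property} and the (forward-backward) irreducibility hypothesis to show that $\lnot\phi_i$ separates $\mathbf{L}_S$ from $\mathbf{L}_{S'}$ whenever $i\in S\setminus S'$. The paper's proof is structured identically, differing only in notation and in explicitly mentioning the symmetric case $j\in S'\setminus S$.
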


\begin{proof} For each $i\in\omega$, let $\phi_{i}$ be a frame
formula for $\mathfrak{F}_{i}$; and for each $I\subseteq\omega$,
let $\mathbf{L}_{I}=\mathbf{Log}(\{\mathfrak{F}_{i}\}_{i\in I})$.
Consider any $I,J\subseteq\omega$ such that there is an $i\in I-J$.
For each $k\in J$, because $i\neq k$, $\phi_{i}$ is by hypothesis
and Lemma \ref{lem-JF-Property} not satisfiable in $\mathfrak{F}_{k}$,
and hence $\mathfrak{F}_{k}\vDash\lnot\phi_{i}$. It then follows
that $\lnot\phi_{i}\in\mathbf{L}_{J}$. By Lemma \ref{Claim-JF-Sat_at_root},
$\mathfrak{F}_{i}\nvDash\lnot\phi_{i}$, and then $\lnot\phi_{i}\notin\mathbf{L}_{I}$,
and hence $\mathbf{L}_{I}\neq\mathbf{L}_{J}$. A similar argument
shows that $\mathbf{L}_{I}\neq\mathbf{L}_{J}$ if there is a $j\in J-I$.
Hence $\mathbf{L}_{I}\neq\mathbf{L}_{J}$ for all $I,J\subseteq\omega$
such that $I\neq J$. It then follows that there is a continuum of
extensions of $\mathbf{L}$. \end{proof}

For each frame $\mathfrak{F}=\left\langle W,R\right\rangle $, we
use $\left\Vert \mathfrak{F}\right\Vert $ for $\left\vert W\right\vert $.
The following is easily verifiable:

\begin{fact} \label{fact:infinite-iso-subsequence}Let $\{\mathfrak{F}_{i}\}_{i\in\omega}$
be an infinite sequence of frames such that for an $m\geqslant1$,
$\left\Vert \mathfrak{F}_{i}\right\Vert \leqslant m$ for all $i\in\omega$.
Then there is an infinite $I\subseteq\omega$ such that all frames
in $\{\mathfrak{F}_{i}\}_{i\in I}$ are isomorphic. \end{fact}

\begin{proposition} Each infinite backward irreducible sequence of
finite  frames has an infinite irreducible subsequence.\label{thm: odd seq =00003D> inter-irreducible subseq}
\end{proposition}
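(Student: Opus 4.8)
The plan is to extract the desired subsequence by a Ramsey-theoretic argument, using that a reduction is onto and hence can never map a frame onto a strictly larger one: whenever $\mathfrak{G}$ is reducible to $\mathfrak{H}$ we have $\left\Vert \mathfrak{H}\right\Vert \leqslant\left\Vert \mathfrak{G}\right\Vert $. Since $\{\mathfrak{F}_{i}\}_{i\in\omega}$ is already backward irreducible, what an irreducible subsequence additionally demands is the \emph{forward} half: for $a<b$ in the subsequence, no point-generated subframe of $\mathfrak{F}_{a}$ should be reducible to $\mathfrak{F}_{b}$. I would therefore colour each pair $i<j$ according to whether some point-generated subframe of $\mathfrak{F}_{i}$ is reducible to $\mathfrak{F}_{j}$, and apply the infinite Ramsey theorem to obtain an infinite $I\subseteq\omega$ on which this two-valued colouring is constant.

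First I would dispose of the monochromatic case in which \emph{every} pair $i<j$ in $I$ admits such a forward reduction. For any such pair the size bound gives $\left\Vert \mathfrak{F}_{j}\right\Vert \leqslant\left\Vert \mathfrak{F}_{i}\right\Vert $, so along $I$ the sizes form a non-increasing sequence of positive integers and are eventually constant, say equal to $m$ on a tail $I^{\prime}$. Fixing $i<j$ in $I^{\prime}$ and a point-generated subframe $\mathfrak{H}=\mathfrak{F}_{i}|_{w}$ reducible to $\mathfrak{F}_{j}$, I get $m=\left\Vert \mathfrak{F}_{j}\right\Vert \leqslant\left\Vert \mathfrak{H}\right\Vert \leqslant\left\Vert \mathfrak{F}_{i}\right\Vert =m$, so $\mathfrak{H}$ is all of $\mathfrak{F}_{i}$ (whence $\mathfrak{F}_{i}$ is generated by $w$) and the reduction $\mathfrak{H}\to\mathfrak{F}_{j}$, being a surjection between finite frames of equal size, is an isomorphism. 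Thus $\mathfrak{F}_{j}\cong\mathfrak{F}_{i}$ is itself point-generated, so $\mathfrak{F}_{j}$ is a point-generated subframe of itself reducible (via the inverse isomorphism) to $\mathfrak{F}_{i}$, contradicting backward irreducibility for $i<j$. Hence this case cannot occur. (Alternatively, one may pass from the constant-size tail to an isomorphic subsequence via Fact \ref{fact:infinite-iso-subsequence} and reach the same contradiction.)

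It follows that on $I$ no pair $i<j$ admits a forward reduction, and I claim $\{\mathfrak{F}_{i}\}_{i\in I}$ is the required irreducible subsequence. Indeed, for distinct $a,b\in I$ the defining condition---no point-generated subframe of $\mathfrak{F}_{b}$ is reducible to $\mathfrak{F}_{a}$---splits into two cases: if $a<b$ it is inherited from the backward irreducibility of the original sequence, and if $b<a$ it is precisely the forward irreducibility secured on $I$. The main obstacle is the monochromatic ``always forward reducible'' case: one is tempted to read off a contradiction directly from the presence of isomorphic frames, but since the $\mathfrak{F}_{i}$ need not be rooted, an isomorphism need not exhibit a frame as a point-generated subframe of itself; the size computation forcing $\mathfrak{H}=\mathfrak{F}_{i}$---and hence the rootedness that lets backward irreducibility bite---is exactly what bridges this gap.
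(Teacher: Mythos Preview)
Your proof is correct but takes a heavier route than the paper's. The paper avoids Ramsey entirely: it first argues via Fact~\ref{fact:infinite-iso-subsequence} that the sizes $\left\Vert\mathfrak{F}_i\right\Vert$ cannot be uniformly bounded (an infinite isomorphic subsequence would contradict backward irreducibility), and then simply picks an infinite $I\subseteq\omega$ along which $\left\Vert\mathfrak{F}_i\right\Vert$ is strictly increasing. On such an $I$ forward irreducibility is automatic by cardinality---no subframe of the smaller $\mathfrak{F}_i$ can surject onto the strictly larger $\mathfrak{F}_j$---and backward irreducibility is inherited, so $\{\mathfrak{F}_i\}_{i\in I}$ is irreducible in two lines. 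Your Ramsey dichotomy essentially rediscovers this: the ``always forward-reducible'' colour forces the sizes to stabilise and then yields the same rooted-isomorphic contradiction, while the ``never forward-reducible'' colour is precisely the conclusion the paper reaches directly via the size-increasing subsequence. One thing your longer argument does buy: the paper's step ``bounded sizes $\Rightarrow$ contradiction'' tacitly needs the isomorphic frames to be rooted (a constant sequence of copies of a non-rooted finite frame can be backward irreducible with bounded size), whereas your computation $\left\Vert\mathfrak{H}\right\Vert=\left\Vert\mathfrak{F}_i\right\Vert$ explicitly forces $\mathfrak{F}_i$ to be point-generated before invoking backward irreducibility---exactly the subtlety you flag in your final paragraph.
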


\begin{proof} Let $\{\mathfrak{F}_{i}\}_{i\in\omega}$ be a backward
irreducible sequence of finite  frames. By Fact \ref{fact:infinite-iso-subsequence},
there is no $m\in\omega$ such that $\left\Vert \mathfrak{F}_{i}\right\Vert \leqslant m$
for all $i\in\omega$. Then there is an infinite $I\subseteq\omega$
such that for all $i,j\in I$ with $i<j$, $\left\Vert \mathfrak{F}_{i}\right\Vert <\left\Vert \mathfrak{F}_{j}\right\Vert $,
and hence no point-generated subframe of $\mathfrak{F}_{i}$ is reducible
to $\mathfrak{F}_{j}$. It then follows that $\{\mathfrak{F}_{i}\}_{i\in I}$
is irreducible. \end{proof}

\begin{theorem} Let $\mathbf{L}$ be a transitive logic whose extensions
all have the f.m.p. Then the following are equivalent:\footnote{\label{fn:stronger fa iff}Since a continuum of extensions of $\mathbf{L}$
can be constructed from an infinite irreducible sequence of finite
rooted frames for $\mathbf{L}$, we also have the following equivalences:
there is a continuum of non-finitely-axiomatizble extensions of $\mathbf{L}$
iff there is an infinite backward irreducible sequence of finite rooted
frames for $\mathbf{L}$ iff there is an infinite irreducible sequence
of finite rooted frames for $\mathbf{L}$. }\label{coro: trans L fa iff}
\begin{enumerate}
\item all extensions of $\mathbf{L}$ are finitely axiomatizable over $\mathbf{L}$;
\item there is no infinite backward irreducible sequence of finite rooted
frames for $\mathbf{L}$;
\item there is no infinite irreducible sequence of finite rooted frames
for $\mathbf{L}$. 
\end{enumerate}
\end{theorem}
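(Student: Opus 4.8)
The plan is to prove the cycle of implications $(1)\Rightarrow(2)\Rightarrow(3)\Rightarrow(1)$, drawing on the results already assembled in this section. The logic $\mathbf{L}$ is transitive and all its extensions have the f.m.p., so each extension is characterized by a subclass of the class of finite rooted frames for $\mathbf{L}$; this is exactly the hypothesis that lets us apply Corollary \ref{thm: ext of L f.a. iff no funny seq 4 L} and its machinery.

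For $(1)\Rightarrow(2)$ I would argue the contrapositive, essentially re-running the construction inside the proof of Theorem \ref{thm: all exts of L are fa, iff}. Suppose there is an infinite backward irreducible sequence $\{\mathfrak{F}_i\}_{i\in\omega}$ of finite rooted frames for $\mathbf{L}$. For each $i$, let $\phi_i$ be a frame formula for $\mathfrak{F}_i$; by Lemma \ref{Claim-JF-Sat_at_root} we have $\mathfrak{F}_i\nvDash\lnot\phi_i$, while backward irreducibility together with Lemma \ref{lem-JF-Property} gives $\mathfrak{F}_j\vDash\lnot\phi_i$ whenever $j>i$. Setting $\mathbf{L}_n=\mathbf{L}\oplus\{\lnot\phi_i : i<n\}$ produces a chain of extensions of $\mathbf{L}$ which is strictly ascending (since $\lnot\phi_n$ separates $\mathbf{L}_{n+1}$ from $\mathbf{L}_n$, as $\mathfrak{F}_n$ validates $\mathbf{L}_n$ but refutes $\lnot\phi_n$), whose join is not finitely axiomatizable over $\mathbf{L}$ by Theorem \ref{thm: Tarski's criterion of f. ax.}. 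Hence $(1)$ fails.

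The implication $(2)\Rightarrow(3)$ is immediate, since every irreducible sequence is in particular backward irreducible, so the nonexistence of the former follows from the nonexistence of the latter --- or, read the other way, a witnessing irreducible sequence for the failure of $(3)$ is a backward irreducible sequence witnessing the failure of $(2)$. For $(3)\Rightarrow(1)$ I would combine Corollary \ref{thm: ext of L f.a. iff no funny seq 4 L} with Proposition \ref{thm: odd seq =00003D> inter-irreducible subseq}: if $(1)$ failed then by Corollary \ref{thm: ext of L f.a. iff no funny seq 4 L} there would be an infinite backward irreducible sequence of finite rooted frames for $\mathbf{L}$, and then Proposition \ref{thm: odd seq =00003D> inter-irreducible subseq} would extract from it an infinite irreducible subsequence, still consisting of finite rooted frames for $\mathbf{L}$, contradicting $(3)$.

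The only delicate point is bookkeeping rather than mathematical depth: I must check that the chain built in $(1)\Rightarrow(2)$ is genuinely strict and that its join is the logic forcing non-finite-axiomatizability, which is where the two frame-formula lemmas do their work and where backward (as opposed to full) irreducibility is exactly the right hypothesis. The footnote's stronger equivalences then follow by also invoking Proposition \ref{lem: (Fn) is an odd seq} to manufacture a continuum of distinct extensions from any irreducible subsequence.
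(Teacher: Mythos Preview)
Your argument is correct, but the route you take for $(1)\Rightarrow(2)$ differs from the paper's. The paper organizes the equivalences differently: it first observes that $(ii)\Leftrightarrow(iii)$ (trivial direction plus Proposition~\ref{thm: odd seq =00003D> inter-irreducible subseq}), then gets $(ii)\Rightarrow(i)$ directly from Corollary~\ref{thm: ext of L f.a. iff no funny seq 4 L}, and finally gets $(i)\Rightarrow(iii)$ by invoking Proposition~\ref{lem: (Fn) is an odd seq}: an irreducible sequence yields a continuum of extensions, and since there are only countably many finite axiomatizations over $\mathbf{L}$, not all of them can be finitely axiomatizable. Your $(1)\Rightarrow(2)$ instead builds \emph{one} explicit non-finitely-axiomatizable extension by hand, assembling the strictly ascending chain $\mathbf{L}_n=\mathbf{L}\oplus\{\lnot\phi_i:i<n\}$ from frame formulas and appealing to Tarski's criterion. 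This is a perfectly valid alternative: it is more constructive and uses backward irreducibility directly without first extracting an irreducible subsequence, whereas the paper's route is shorter given that Proposition~\ref{lem: (Fn) is an odd seq} is already on the shelf and simultaneously yields the continuum statement in the footnote. Your $(2)\Rightarrow(3)$ and $(3)\Rightarrow(1)$ match the paper's reasoning.
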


\begin{proof} By definition of irreducible sequences and Proposition
\ref{thm: odd seq =00003D> inter-irreducible subseq}, (ii) is equivalent
to (iii). According to Corollary \ref{thm: ext of L f.a. iff no funny seq 4 L}
and Proposition \ref{lem: (Fn) is an odd seq}, we have that (ii)
implies (i) and (i) implies (iii), and hence (i) is equivalent to
(ii).\end{proof}

\section{Transitive Logics of Finite Depth and of Finite Weak Width\label{sec:Transitive-Logics-of FD and FSW}}

In this section, we present weak width formulas $\mathsf{Wid}_{n}^{+}$
($n\geqslant1$), discuss their frame conditions, and then show that
there are non-finitely-axiomatizble extensions of $\mathbf{K4B}_{n}\oplus\mathsf{Wid}_{k}^{+}$
whenever $n\geqslant3$ and $k\geqslant2$. 

For each $n\geqslant1$, let $\mathsf{Wid}_{n}^{+}$ be the following
formula:
\[
\mathsf{Wid}_{n}^{+}=q\wedge\Diamond(\Box\lnot q\wedge({
{\textstyle \bigwedge\nolimits _{i\leqslant n}}
}\Diamond p_{i}))\rightarrow{
{\textstyle \bigvee\nolimits _{0\leqslant i\neq j\leqslant n}}
}\Diamond(p_{i}\wedge(p_{j}\vee\Diamond p_{j}))\text{.}
\]
 A transitive logic is \emph{of weak width} $n$ ($n\geqslant1$) if
it contains $\mathsf{Wid}_{n}^{+}$ but not $\mathsf{Wid}_{n-1}^{+}$,
and is \emph{of finite weak width} if it contains $\mathsf{Wid}_{k}^{+}$
for a $k\geqslant1$.

\begin{proposition}\label{prop: weak width frame condition}Let $\mathfrak{F}=\left\langle W,R\right\rangle $
be a transitive frame, and let $w\in W$ and $n\geqslant1$. Then
$\mathfrak{F},w\vDash\mathsf{Wid}_{n}^{+}$ iff for each $u$ with
$\vec{R}wu$, $\mathfrak{F}|_{u}$ is of width at most $n$. \end{proposition}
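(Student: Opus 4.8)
The plan is to prove both directions by relating satisfaction of $\mathsf{Wid}_{n}^{+}$ at $w$ to the validity of the ordinary width formula $\mathsf{Wid}_{n}$ in the generated subframes $\mathfrak{F}|_{u}$ of proper successors $u$ of $w$, so that Proposition~\ref{prop: frame cond 4 Wid_n} can be invoked directly. The conceptual crux is the role of the fresh letter $q$: asserting $q$ at $w$ together with $\Box\lnot q$ at a successor $v$ forces $v$ to be a \emph{proper} successor of $w$. Indeed, if $Rwv$, $\mathfrak{M},w\vDash q$ and $\mathfrak{M},v\vDash\Box\lnot q$, then $Rvw$ would give $\mathfrak{M},w\vDash\lnot q$, a contradiction; hence $\lnot Rvw$ and so $\vec{R}wv$. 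This is exactly the mechanism that localizes the width condition to proper successors.

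For the direction from right to left, assume $\mathfrak{F}|_{u}$ is of width at most $n$ for every $u$ with $\vec{R}wu$. Given a model $\mathfrak{M}$ on $\mathfrak{F}$ in which the antecedent of $\mathsf{Wid}_{n}^{+}$ holds at $w$, I would extract the witness $v$ with $Rwv$, $\mathfrak{M},v\vDash\Box\lnot q$ and $\mathfrak{M},v\vDash\bigwedge_{i\leqslant n}\Diamond p_{i}$; by the observation above $\vec{R}wv$, so $\mathfrak{F}|_{v}$ is of width at most $n$ and hence $\mathfrak{F}|_{v}\vDash\mathsf{Wid}_{n}$ by Proposition~\ref{prop: frame cond 4 Wid_n}. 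Since the $p_{i}$-witnesses seen by $v$ lie in $\mathfrak{F}|_{v}$ (a generated subframe is closed under successors) and truth is preserved under passing to generated submodels, $\mathsf{Wid}_{n}$ applied at $v$ in $\mathfrak{M}|_{v}$ yields some disjunct $\Diamond(p_{i}\wedge(p_{j}\vee\Diamond p_{j}))$ true at $v$; transporting its witness back along $Rwv$ by transitivity makes the corresponding disjunct true at $w$, so $\mathsf{Wid}_{n}^{+}$ holds at $w$.

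For the direction from left to right I would argue contrapositively: suppose some proper successor $u$ of $w$ has $\mathfrak{F}|_{u}$ of width greater than $n$. By Proposition~\ref{prop: frame cond 4 Wid_n} there are a model $\mathfrak{N}$ on $\mathfrak{F}|_{u}$ and a point $z$ of $\mathfrak{F}|_{u}$ with $\mathfrak{N},z\vDash\bigwedge_{i\leqslant n}\Diamond p_{i}$ but $\mathfrak{N},z\nvDash\bigvee_{0\leqslant i\neq j\leqslant n}\Diamond(p_{i}\wedge(p_{j}\vee\Diamond p_{j}))$. I would then build a model $\mathfrak{M}$ on $\mathfrak{F}$ using $z$ as the middle point: put $V(p_{i})=V_{\mathfrak{N}}(p_{i})\cap z{\uparrow}$ (which leaves the $p_{i}$-witnesses of $z$ intact, since they are successors of $z$) and $V(q)=\{x\in W:\lnot Rzx\}$. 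Since $\vec{R}wu$ and $z\in\{u\}\cup u{\uparrow}$, transitivity gives both $Rwz$ and $\lnot Rzw$; hence $\mathfrak{M},w\vDash q$, $\mathfrak{M},z\vDash\Box\lnot q$, and $\mathfrak{M},z\vDash\bigwedge_{i\leqslant n}\Diamond p_{i}$, so the antecedent of $\mathsf{Wid}_{n}^{+}$ holds at $w$.

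It remains to check that the consequent fails at $w$, and here lies the main obstacle: a priori $w$ might satisfy some disjunct through a successor that $z$ does not see. The restriction of each $V(p_{i})$ to $z{\uparrow}$ is what rules this out, because any witness $y$ with $\mathfrak{M},y\vDash p_{i}$ then satisfies $Rzy$. Consequently every disjunct true at $w$ would already be true at $z$; moreover, since $\mathfrak{M}$ and $\mathfrak{N}$ assign the same values to $p_{i},p_{j}$ exactly on $z{\uparrow}$ (which, by transitivity, contains everything relevant to evaluating these $\Diamond$-formulas at $z$), truth of each disjunct at $z$ agrees in the two models. This contradicts the choice of $z$, so the consequent fails at $w$ and $\mathfrak{F},w\nvDash\mathsf{Wid}_{n}^{+}$. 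I expect the delicate points to be the bookkeeping that the restricted valuation preserves $\bigwedge_{i\leqslant n}\Diamond p_{i}$ at $z$ while confining all $p_{i}$-points to $z{\uparrow}$, and the verification that $z$'s relevant $\Diamond$-formulas are evaluated identically in $\mathfrak{M}$ and $\mathfrak{N}$.
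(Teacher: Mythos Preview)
Your proposal is correct, and the overall strategy---using $q$ to force the intermediate witness to be a \emph{proper} successor and then localizing the width argument there---is the same as the paper's. The difference is one of packaging: you invoke Proposition~\ref{prop: frame cond 4 Wid_n} as a black box, while the paper unfolds it and works directly with an explicit antichain.

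Concretely, for the left-to-right direction the paper simply takes an antichain $\{v_{0},\ldots,v_{n}\}\subseteq u{\uparrow}$ and sets $V(q)=\{w\}$ and $V(p_{i})=\{v_{i}\}$; the verification that the consequent fails at $w$ is then immediate, since each $p_{i}$ is true at exactly one point. Your route through Proposition~\ref{prop: frame cond 4 Wid_n} produces an auxiliary point $z$ (possibly distinct from $u$) and an arbitrary refuting valuation on $\mathfrak{F}|_{u}$, which is why you need the restriction $V(p_{i})=V_{\mathfrak{N}}(p_{i})\cap z{\uparrow}$ and the choice $V(q)=\{x:\lnot Rzx\}$ to regain control over where $p_{i}$-points can live. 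That bookkeeping is sound, and your observation that $\vec{R}wu$ together with $z\in\{u\}\cup u{\uparrow}$ forces $\vec{R}wz$ is exactly what is needed. The paper's direct construction is shorter; your modular reduction makes the relationship between $\mathsf{Wid}_{n}^{+}$ and $\mathsf{Wid}_{n}$ more transparent. For the right-to-left direction the two arguments are essentially dual to one another in the same way.
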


\begin{proof} Suppose that $\mathfrak{M},w\nvDash\mathsf{Wid}_{n}^{+}$
for a model $\mathfrak{M}$ on $\mathfrak{F}$. Because $\mathfrak{M},w\vDash q\wedge\Diamond(\Box\lnot q\wedge({\bigwedge\nolimits _{i\leqslant n}}\Diamond p_{i}))$,
there is a $u\in\left.w\right\uparrow $ such that $\mathfrak{M},u\vDash\Box\lnot q\wedge({\bigwedge\nolimits _{i\leqslant n}}\Diamond p_{i})$,
and then $\vec{R}wu$, and for each $i\leqslant n$, $\mathfrak{M},v_{i}\vDash p_{i}$
for a $v_{i}\in\left.u\right\uparrow $. Consider any $i,j\leqslant n$
such that $i\neq j$. Because $\mathfrak{M},w\nvDash{\bigvee\nolimits _{0\leqslant i\neq j\leqslant n}}\Diamond(p_{i}\wedge(p_{j}\vee\Diamond p_{j}))$,
and because $Rwv_{i}$ by the transitivity of $R$, it then follows
that $\mathfrak{M},v_{i}\vDash p_{i}$ and $\mathfrak{M},v_{i}\nvDash p_{j}\vee\Diamond p_{j}$,
and $\mathfrak{M},v_{j}\vDash p_{j}$ and $\mathfrak{M},v_{j}\nvDash p_{i}\vee\Diamond p_{i}$,
and then neither $v_{i}=v_{j}$ nor $Rv_{i}v_{j}$ nor $Rv_{j}v_{i}$.
Hence $\{v_{0},\ldots,v_{n}\}$ is an antichain, and then $\mathfrak{F}|_{u}$
is of width greater than $n$ because $\{v_{0},\ldots,v_{n}\}\subseteq\left.u\right\uparrow $.

Suppose that there is a $u\in\mathbf{c}_{(w)}{\uparrow}^{-}$ such
that $\mathfrak{F}|_{u}$ is of width greater than $n$. Then there
is an antichain $\{v_{0},\ldots,v_{n}\}\subseteq\left.u\right\uparrow $.
Let $\mathfrak{M}=\left\langle \mathfrak{F},V\right\rangle $ where
$V(q)=\{w\}$, and $V(p_{i})=v_{i}$ for each $i\leqslant n$. Since
$\vec{R}wu$ and $\{v_{0},\ldots,v_{n}\}\subseteq\left.u\right\uparrow $,
it is easy to see that $\mathfrak{M},u\vDash\Box\lnot q\wedge({\bigwedge\nolimits _{i\leqslant n}}\Diamond p_{i})$,
and then $\mathfrak{M},w\vDash q\wedge\Diamond(\Box\lnot q\wedge({\bigwedge\nolimits _{i\leqslant n}}\Diamond p_{i}))$.
For each $v\in\left.w\right\uparrow $ and each $i\leqslant n$, if
$\mathfrak{M},v\vDash p_{i}$, we know by definition of $V$ that
$v=v_{i}$ and $\mathfrak{M},v\nvDash p_{j}\vee\Diamond p_{j}$ for
each $j\leqslant n$ with $j\neq i$. Hence for each $v\in\left.w\right\uparrow $,
$\mathfrak{M},v\nvDash{\bigvee\nolimits _{0\leqslant i\neq j\leqslant n}(}p_{i}\wedge(p_{j}\vee\Diamond p_{j}))$,
from which it follows that $\mathfrak{M},w\nvDash{\bigvee\nolimits _{0\leqslant i\neq j\leqslant n}\Diamond}(p_{i}\wedge(p_{j}\vee\Diamond p_{j}))$,
and hence $\mathfrak{M},w\nvDash\mathsf{Wid}_{n}^{+}$. \end{proof}

In what follows, we show that there are non-finitely-axiomatizble
extensions of $\mathbf{K4B}_{n}\oplus\mathsf{Wid}_{k}^{+}$ whenever
$n\geqslant3$ and $k\geqslant2$, by way of constructing irreducible
sequences of finite rooted transitive frames of rank $3$. 

We now construct irreducible sequences of finite rooted transitive
frames of rank $3$, in each of which all points of rank $2$ have
exactly two proper successors. For each $n\in\omega$, let $B_{n}=\{X\in
\mathscr{P}
(C_{n}):\left\vert X\right\vert =2\}$, where $C_{n}=\{k:k\leqslant n+1\}$, and let $\mathfrak{H}_{n}=\left\langle W_{n},E_{n}\right\rangle $,
where
\begin{align*}
 & W_{n}=\{a\}\cup B_{n}\cup C_{n}\text{,}\\
 & E_{n}=\{\left\langle a,u\right\rangle :u\in B_{n}\cup C_{n}\}\cup\{\left\langle b,c\right\rangle \in B_{n}\times C_{n}:c\in b\}\text{.}
\end{align*}
It is easy to see that for each $n\in\omega$ and in each of $\mathfrak{H}_{n}$,
$a$ is of rank $3$, and members of $B_{n}$ are of rank $2$ while
those of $C_{n}$ are of rank $1$. Note that for each $n\in\omega$,
$\mathfrak{H}_{n}$ is a finite strict partial order. Since all points
of rank 2 in these frames have exactly two proper successors, the
following Fact holds:

\begin{fact} For each $n\geqslant2$, $\mathsf{Wid}_{n}^{+}$ is
valid in all members of $\{\mathfrak{H}_{n}\}_{n\in\omega}$.\label{coro: SWn valid all Fi bt Gi and G'i}
\end{fact}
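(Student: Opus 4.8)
The plan is to reduce everything to the frame condition for $\mathsf{Wid}_n^+$ supplied by Proposition \ref{prop: weak width frame condition}. Fix $n \geqslant 2$ and an arbitrary member $\mathfrak{H}_m$ of the sequence. By that proposition, to show $\mathfrak{H}_m \vDash \mathsf{Wid}_n^+$ it suffices to verify, for every $w \in W_m$ and every $u$ with $\vec{E}_m w u$, that the generated subframe $\mathfrak{H}_m|_u$ is of width at most $n$. Since $\mathfrak{H}_m$ is a finite strict partial order, $\vec{E}_m w u$ holds exactly when $E_m w u$, so the relevant points $u$ are precisely the proper successors occurring in $E_m$.

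First I would enumerate the possible $u$. Reading off $E_m$, every proper successor of a point lies in $B_m \cup C_m$: the root $a$ sees all of $B_m \cup C_m$, each $b \in B_m$ sees only the two elements of $C_m$ belonging to $b$, and each $c \in C_m$ sees nothing. Hence it is enough to bound the width of $\mathfrak{H}_m|_u$ in the two cases $u \in C_m$ and $u \in B_m$.

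The core computation is then immediate. If $u \in C_m$, then $\mathfrak{H}_m|_u$ is the single point $\{u\}$, whose only antichain is $\{u\}$, so its width is $1$. If $u \in B_m$, then $u$ is a point of rank $2$ with exactly two proper successors $c_1, c_2 \in C_m$, and $\mathfrak{H}_m|_u = \{u, c_1, c_2\}$; since $c_1 \perp_{E_m} c_2$ while $u$ sees both, the largest antichain is $\{c_1, c_2\}$ and the width is exactly $2$. In either case the width is at most $2 \leqslant n$, so by Proposition \ref{prop: weak width frame condition} we obtain $\mathfrak{H}_m, w \vDash \mathsf{Wid}_n^+$ for every $w \in W_m$, i.e.\ $\mathfrak{H}_m \vDash \mathsf{Wid}_n^+$.

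I do not expect a genuine obstacle here: the argument is essentially the observation already flagged before the statement, namely that every rank-$2$ point of $\mathfrak{H}_m$ has exactly two proper successors. The only point demanding mild care is the clash of the index $n$ in $\mathsf{Wid}_n^+$ (fixed, $\geqslant 2$) with the bound index in $\{\mathfrak{H}_n\}_{n\in\omega}$ (ranging over all frames); keeping these separate, written here as $n$ and $m$, makes clear that the single bound ``width $\leqslant 2$'' works uniformly for all $m \in \omega$ and all $n \geqslant 2$.
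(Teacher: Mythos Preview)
Your proposal is correct and matches the paper's approach: the paper states this as a Fact without proof, relying only on the preceding sentence that all rank-$2$ points in each $\mathfrak{H}_m$ have exactly two proper successors. Your argument simply spells out this observation via Proposition~\ref{prop: weak width frame condition}, which is exactly the intended justification.
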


In our proof of Lemma \ref{lem: bad sequence 2}, we make use of the
following simple fact about reduction:

\begin{fact} \label{fact:reduction rank > rank}Let $f$ be a reduction
of $\mathfrak{F}$ to $\mathfrak{G}$, where both $\mathfrak{F}$
and $\mathfrak{G}$ are transitive, and let $w$ be a point in $\mathfrak{F}$.
Then the following hold:
\begin{enumerate}
\item $w$ is a dead-end in $\mathfrak{F}$ iff $f(w)$ is a dead-end in
$\mathfrak{G}$;\label{fact:reduction rank > rank, 1}
\item for each $n\geqslant1$, if $f(w)$ is of rank $n$ in $\mathfrak{G}$,
then $w$ is of rank at least $n$ in $\mathfrak{F}$.\label{fact:reduction rank > rank, 2} 
\end{enumerate}
\end{fact}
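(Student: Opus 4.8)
The plan is to prove both parts by unwinding the definitions of reduction, dead-end, and rank, keeping in mind that a reduction (p-morphism) of transitive frames satisfies the forth condition (if $Rwv$ then $S f(w)f(v)$) and the back condition (if $S f(w)v'$ then $Rwv$ for some $v$ with $f(v)=v'$), together with surjectivity.

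For part (\ref{fact:reduction rank > rank, 1}), I would argue both directions from the back-and-forth conditions. First, if $w$ is a dead-end in $\mathfrak{F}$, then $w$ has no successors, so by the back condition $f(w)$ can have no successors either; hence $f(w)$ is a dead-end in $\mathfrak{G}$. (Here one must be slightly careful about whether ``dead-end'' means ``irreflexive with no successors'' versus ``no proper successors''; I would take the forth/back conditions at face value and note that $Rww$ would force $S f(w)f(w)$ and conversely, so reflexivity is preserved too.) Conversely, if $f(w)$ is a dead-end in $\mathfrak{G}$ but $w$ had some successor $v$ in $\mathfrak{F}$, the forth condition would give $S f(w) f(v)$, contradicting that $f(w)$ is a dead-end; so $w$ is a dead-end in $\mathfrak{F}$.

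For part (\ref{fact:reduction rank > rank, 2}), the key observation is that the back condition lets me lift an $\vec{E}$-chain (strict $R$-chain) witnessing rank $n$ in $\mathfrak{G}$ back to a strict chain in $\mathfrak{F}$. Concretely, suppose $f(w)$ is of rank $n$, witnessed by an $\vec{S}$-chain $f(w)=x_1, x_2, \ldots, x_n$. I would build a chain $w=w_1, w_2, \ldots, w_n$ in $\mathfrak{F}$ by repeatedly applying the back condition: given $w_i$ with $f(w_i)=x_i$ and $S x_i x_{i+1}$, the back condition yields $w_{i+1}$ with $R w_i w_{i+1}$ and $f(w_{i+1})=x_{i+1}$. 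I then need that this is an $\vec{R}$-chain, i.e.\ that the steps are strict ($\vec{R} w_i w_{i+1}$, meaning $R w_i w_{i+1}$ but not $R w_{i+1} w_i$). This follows because if $R w_{i+1} w_i$ held, the forth condition would give $S x_{i+1} x_i$, contradicting the strictness $\vec{S} x_i x_{i+1}$ of the original chain. Thus $w_1, \ldots, w_n$ is a genuine $\vec{R}$-chain of length $n$ starting at $w$, so $w$ is of rank at least $n$ in $\mathfrak{F}$.

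The main obstacle, and the only place deserving real care, is the strictness bookkeeping in part (\ref{fact:reduction rank > rank, 2}): I must make sure each lifted step is strict and that the lifted points are genuinely distinct, so that the chain truly has length $n$ in the sense of the paper's definition of rank (which is phrased in terms of $\vec{R}$-chains). The strictness argument via the forth condition handles both issues at once, since an $\vec{R}$-chain automatically consists of distinct points. Everything else is a direct, mechanical application of the defining conditions of a reduction, so I would present part (\ref{fact:reduction rank > rank, 1}) briefly and devote the bulk of the write-up to the chain-lifting argument for part (\ref{fact:reduction rank > rank, 2}).
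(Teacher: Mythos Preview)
Your proposal is correct. Note, however, that the paper does not actually supply a proof of this statement: it is introduced as ``the following simple fact about reduction'' and left unproved. Your argument---using the back condition to pull successors (or an $\vec{S}$-chain) back to $\mathfrak{F}$ and the forth condition to verify that the lifted chain is strict---is exactly the standard verification one would expect, and it matches the spirit in which the paper invokes the fact. The only cosmetic slip is the stray ``$\vec{E}$-chain'' where you mean ``$\vec{S}$-chain''.
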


\begin{lemma} $\{\mathfrak{H}_{n}\}_{n\in\omega}$ is irreducible.\label{lem: bad sequence 2}
\end{lemma}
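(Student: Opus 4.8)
The plan is to fix $i\neq j$ and examine every point-generated subframe of $\mathfrak{H}_{j}$, showing that none of them reduces to $\mathfrak{H}_{i}$. First I would note that, up to isomorphism, the point-generated subframes of $\mathfrak{H}_{j}$ come in three types: $\mathfrak{H}_{j}$ itself (generated by $a$); the frame generated by some $b\in B_{j}$, consisting of $b$ together with its two dead-end successors (a frame of rank $2$); and the single-point frame generated by some $c\in C_{j}$ (of rank $1$). Since the point $a$ of $\mathfrak{H}_{i}$ has rank $3$ and any reduction is onto, Fact \ref{fact:reduction rank > rank}(ii) would force a point of rank at least $3$ in the domain; neither the rank-$2$ nor the rank-$1$ subframe has one. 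Hence only the case of $\mathfrak{H}_{j}$ itself requires real work, and it suffices to prove that $\mathfrak{H}_{j}$ is reducible to $\mathfrak{H}_{i}$ only when $i=j$.

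So suppose $f$ reduces $\mathfrak{H}_{j}$ to $\mathfrak{H}_{i}$; the key is to show that $f$ respects the rank-strata. By Fact \ref{fact:reduction rank > rank}(i), $w$ is a dead-end iff $f(w)$ is, so $f$ maps $C_{j}$ into $C_{i}$; and since $f$ is onto while every dead-end of $\mathfrak{H}_{i}$ has only dead-end preimages, $f$ in fact maps $C_{j}$ \emph{onto} $C_{i}$. Next I would show $f(b)\in B_{i}$ for each $b\in B_{j}$: by Fact \ref{fact:reduction rank > rank}(i) the point $f(b)$ is not a dead-end, and if $f(b)$ were the root $a$ of $\mathfrak{H}_{i}$, the back condition would require the two (dead-end) successors of $b$ to cover \emph{all} successors of $a$, including the nonempty set $B_{i}$, which is impossible since their images lie in $C_{i}$. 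Thus $f$ sends $B_{j}$ into $B_{i}$.

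The decisive step concerns the action of $f$ on pairs. Fix $b=\{c_{1},c_{2}\}\in B_{j}$; its successors are exactly $c_{1},c_{2}$, and $f(b)\in B_{i}$ has exactly two successors. The forth condition places $f(c_{1}),f(c_{2})$ among those two successors, while the back condition forces them to exhaust both, whence $f(c_{1})\neq f(c_{2})$. Because $B_{j}$ consists of \emph{all} two-element subsets of $C_{j}$, this says that $f$ separates every pair of distinct elements of $C_{j}$, i.e.\ $f\upharpoonright C_{j}$ is injective; together with its surjectivity onto $C_{i}$ it is a bijection, so $|C_{j}|=|C_{i}|$ and therefore $i=j$. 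I expect this last observation---that taking $B_{j}$ to be the full set of $2$-subsets of $C_{j}$ converts the reduction's pair condition into injectivity of $f\upharpoonright C_{j}$---to be the heart of the argument, the remainder being routine dead-end and rank bookkeeping via Fact \ref{fact:reduction rank > rank}.
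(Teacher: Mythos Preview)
Your proof is correct and follows essentially the same route as the paper: both hinge on the observation that for $b=\{c,c'\}\in B_{j}$ the back condition at $b$ forces $f(c)\neq f(c')$, whence $f\upharpoonright C_{j}$ is injective and $|C_{j}|=|C_{i}|$. The paper phrases this contrapositively (assume $k<n$, pick a collision $f(c)=f(c')$, and derive a back-condition failure at $b=\{c,c'\}$) and handles the proper point-generated subframes and the case $n<k$ more tersely via cardinality, but the core idea is identical.
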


\begin{proof} Let $k,n\in\omega$ with $k<n$. We only show that
$\mathfrak{\mathfrak{H}}_{n}$ is not reducible to $\mathfrak{\mathfrak{H}}_{k}$,
the other direction is trivial because $\left\vert W_{k}\right\vert <\left\vert W_{n}\right\vert $.
Let us use $R$ for $E_{n}$ and $S$ for $E_{k}$. By definition,
$\left.b\right\uparrow _{E_{n}}=b$ for each $b\in B_{n}$, and hence
by hypothesis,
\begin{equation}
\left.b\right\uparrow _{R}=b\text{ for each }b\in B_{n}\text{.}\label{Display:O2}
\end{equation}
Suppose for reductio that $f$ reduces $\mathfrak{\mathfrak{H}}_{n}$
to $\mathfrak{\mathfrak{H}}_{k}$. It follows from Fact \ref{fact:reduction rank > rank}
that $f(a)=a$, $f[B_{n}]=B_{k}$ and $f[C_{n}]=C_{k}$. Since $k<n$,
$C_{k}\subset C_{n}$, and then there are distinct $c,c^{\prime}\in C_{n}$
such that $f(c)=f(c^{\prime})$. Let $b=\{c,c^{\prime}\}\in B_{n}$.
Then $f(b)=\{v,v^{\prime}\}\in B_{k}$ for some distinct $v,v^{\prime}\in C_{k}$.
By definition,
\begin{equation}
Sf(b)v\text{, }Sf(b)v^{\prime}\text{ and }f(b)\neq v,v^{\prime}\text{.}\label{display:03}
\end{equation}
Since $f(c)=f(c^{\prime})$, either $v\neq f(c),f(c^{\prime})$ or
$v^{\prime}\neq f(c),f(c^{\prime})$.\ If $v\neq f(c),f(c^{\prime})$,
then by (\ref{Display:O2}) and (\ref{display:03}), $Sf(b)v$ but
$f(u)\neq v$ for each $u\in\left.b\right\uparrow _{R}=\{c,c^{\prime}\}$,
contrary to the supposition that $f$ reduces $\mathfrak{\mathfrak{H}}_{n}$
to $\mathfrak{\mathfrak{H}}_{k}$. By the same token, if $v^{\prime}\neq f(c),f(c^{\prime})$,
then $Sf(b)v^{\prime}$ but $f(u)\neq v^{\prime}$ for each $u\in\left.b\right\uparrow _{R}$,
contrary to the supposition again. \end{proof}

\begin{theorem} Let $n\geqslant3$ and $k\geqslant2$. There are
non-finitely-axiomatizble extensions of $\mathbf{K4B}_{n}\oplus\mathsf{Wid}_{k}^{+}$.\footnote{According to footnote \ref{fn:stronger fa iff}, we can actually show
that there is a continuum of extensions of $\mathbf{K4B}_{n}\oplus\mathsf{Wid}_{k}^{+}$
whenever $n\geqslant3$ and $k\geqslant2$.} \end{theorem}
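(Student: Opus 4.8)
The plan is to present $\{\mathfrak{H}_{m}\}_{m\in\omega}$ as an infinite irreducible sequence of finite rooted frames for $\mathbf{L}=\mathbf{K4B}_{n}\oplus\mathsf{Wid}_{k}^{+}$ and then read off the conclusion from Theorem~\ref{coro: trans L fa iff}. First I would check that $\mathbf{L}$ meets the hypothesis of that theorem, namely that all of its extensions have the f.m.p.: every extension of $\mathbf{L}$ contains $\mathsf{B}_{n}$, hence is a transitive logic of finite depth, so Theorem~\ref{thm: Segerberg, depth} applies.

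Next I would verify that each $\mathfrak{H}_{m}$ is a frame for $\mathbf{L}$, checking the three generating axioms in turn. Since each $\mathfrak{H}_{m}$ is a finite strict partial order it is transitive, so $\mathfrak{H}_{m}\vDash\mathbf{K4}$. As observed right after the construction, each $\mathfrak{H}_{m}$ is of rank $3$; because $n\geqslant3$ this means each $\mathfrak{H}_{m}$ is of rank at most $n$, and so $\mathfrak{H}_{m}\vDash\mathsf{B}_{n}$ by Proposition~\ref{prop: frame cond 4 B_k}. Finally, Fact~\ref{coro: SWn valid all Fi bt Gi and G'i} delivers $\mathfrak{H}_{m}\vDash\mathsf{Wid}_{k}^{+}$ for $k\geqslant2$. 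Hence $\mathfrak{H}_{m}\vDash\mathbf{L}$ for every $m$; each $\mathfrak{H}_{m}$ is finite and rooted (by $a$); and by Lemma~\ref{lem: bad sequence 2} the sequence is irreducible. Therefore $\{\mathfrak{H}_{m}\}_{m\in\omega}$ is an infinite irreducible sequence of finite rooted frames for $\mathbf{L}$.

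With this in hand, the failure of clause (iii) of Theorem~\ref{coro: trans L fa iff} forces the failure of clause (i), so not all extensions of $\mathbf{L}$ are finitely axiomatizable over $\mathbf{L}$; pick one such extension $\mathbf{L}'$. The final step is to pass from ``not finitely axiomatizable over $\mathbf{L}$'' to ``not finitely axiomatizable'' in the sense of the paper, i.e.\ over $\mathbf{K}$. Here I would note that $\mathbf{L}=\mathbf{K}\oplus\{\Box p\to\Box\Box p,\mathsf{B}_{n},\mathsf{Wid}_{k}^{+}\}$ is itself finitely axiomatizable over $\mathbf{K}$, and that whenever $\mathbf{L}$ is finitely axiomatized an extension of $\mathbf{L}$ is finitely axiomatizable over $\mathbf{L}$ if and only if it is finitely axiomatizable over $\mathbf{K}$. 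Thus $\mathbf{L}'$ is a genuinely non-finitely-axiomatizable extension of $\mathbf{L}$.

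I do not expect a genuinely hard step here, since the technical heart is already isolated in Lemma~\ref{lem: bad sequence 2} and Fact~\ref{coro: SWn valid all Fi bt Gi and G'i}. The only point needing care is the bookkeeping that each $\mathfrak{H}_{m}$ validates all three axioms at once --- in particular aligning the rank-$3$ observation with the bound $n\geqslant3$ and the width bound with $k\geqslant2$ --- together with the remark that relative and absolute finite axiomatizability coincide for extensions of the finitely axiomatized base $\mathbf{L}$. Following footnote~\ref{fn:stronger fa iff}, the same sequence in fact yields a continuum of such extensions.
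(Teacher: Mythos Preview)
Your proposal is correct and follows essentially the same approach as the paper: verify that each $\mathfrak{H}_{m}$ is a finite rooted frame for $\mathbf{K4B}_{n}\oplus\mathsf{Wid}_{k}^{+}$ via Proposition~\ref{prop: frame cond 4 B_k} and Fact~\ref{coro: SWn valid all Fi bt Gi and G'i}, invoke the irreducibility from Lemma~\ref{lem: bad sequence 2}, and conclude by Theorem~\ref{coro: trans L fa iff}. You are in fact more careful than the paper in explicitly checking the f.m.p.\ hypothesis of Theorem~\ref{coro: trans L fa iff} via Theorem~\ref{thm: Segerberg, depth} and in spelling out why non-finite-axiomatizability over $\mathbf{L}$ transfers to non-finite-axiomatizability over $\mathbf{K}$.
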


\begin{proof} By Proposition \ref{prop: frame cond 4 B_k} and Fact
\ref{coro: SWn valid all Fi bt Gi and G'i}, we have that for each
$n\in\omega$, $\mathfrak{H}_{n}$ is a frame for $\mathbf{K4B}_{n}\oplus\mathsf{Wid}_{k}^{+}$.
It then follows from Lemma \ref{lem: bad sequence 2} and Theorem
\ref{coro: trans L fa iff}, there are non-finitely-axiomatizble extensions
of $\mathbf{K4B}_{n}\oplus\mathsf{Wid}_{k}^{+}$. \end{proof}

\section{Finite Axiomatizability of Transitive Logics of Finite Depth and
of Weak Width $1$\label{sect: FA of lgcs of finite suc-eq-width}}

Consider the following formulas, where $n\geqslant1$:
\[
\mathsf{Wid}_{n}^{\bullet}={
{\textstyle \bigwedge\nolimits _{i\leqslant n}}
}\Diamond(p_{i}\wedge\Box\neg p_{i})\rightarrow{
{\textstyle \bigvee\nolimits _{0\leqslant i\neq j\leqslant n}}
}\Diamond(p_{i}\wedge(p_{j}\vee\Diamond p_{j}))\text{.}
\]
In this section, we discuss the frame conditions for $\mathsf{Wid}_{n}^{\bullet}$
with $n\geqslant1$, provide a study of well-quasi-orders on trees,
and then prove the finite axiomatizability of each transitive logic
of finite depth and of weak width $1$ that contains $\mathsf{Wid}_{n}^{\bullet}$
for an $n\geqslant1$.

\subsection{Transitive Frames for $\mathsf{Wid}_{n}^{\bullet}$ \label{sect: frames of finite suc-eq-width}}

Let $\mathfrak{F}=\left\langle W,R\right\rangle $ be any frame, and
let $A$ be an antichain in $\mathfrak{F}$. We say $A$ is \emph{irreflexive}
if for all $w\in A$, $Rww$ fails. 

\begin{proposition} Let $\mathfrak{F}=\left\langle W,R\right\rangle $
be any transitive frame, and let $w\in W$ and $n\geqslant1$. Then
$\mathfrak{F},w\vDash\mathsf{Wid}_{n}^{\bullet}$ iff $\left\vert A\right\vert \leqslant n$
for each irreflexive antichain $A$ in $\mathfrak{F}|_{w}$.\label{prop: frame cond 4 Wid*n}
\end{proposition}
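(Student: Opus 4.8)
The plan is to prove the biconditional in contrapositive form, showing that $\mathfrak{F},w\nvDash\mathsf{Wid}_{n}^{\bullet}$ holds exactly when $\mathfrak{F}|_{w}$ contains an irreflexive antichain of cardinality $n+1$; this parallels the proof of Proposition \ref{prop: weak width frame condition}. The one feature that distinguishes $\mathsf{Wid}_{n}^{\bullet}$ from $\mathsf{Wid}_{n}$ is the conjunct $\Box\neg p_{i}$ sitting inside each $\Diamond$, so I would isolate its role at the outset: under any valuation in which $v$ is the unique point satisfying $p_{i}$, we have $\mathfrak{M},v\vDash p_{i}\wedge\Box\neg p_{i}$ iff $v$ is irreflexive, since $v\vDash p_{i}$ together with $\Box\neg p_{i}$ forbids $v$ from seeing any $p_{i}$-point, itself included. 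This is precisely the device by which $\mathsf{Wid}_{n}^{\bullet}$ tests for irreflexivity of the witnessing antichain, and it is the only genuinely new ingredient relative to the width formula.

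For the forward contrapositive, I would fix a model $\mathfrak{M}$ on $\mathfrak{F}$ with $\mathfrak{M},w\nvDash\mathsf{Wid}_{n}^{\bullet}$. The antecedent supplies, for each $i\leqslant n$, a successor $v_{i}$ of $w$ with $\mathfrak{M},v_{i}\vDash p_{i}\wedge\Box\neg p_{i}$; the opening remark yields that each $v_{i}$ is irreflexive. The failure of the consequent gives $\mathfrak{M},w\nvDash\Diamond(p_{i}\wedge(p_{j}\vee\Diamond p_{j}))$ for every ordered pair $i\neq j$. Evaluating this at $v_{i}$, which $w$ sees and which satisfies $p_{i}$, forces $\mathfrak{M},v_{i}\nvDash p_{j}$ and $\mathfrak{M},v_{i}\nvDash\Diamond p_{j}$; the first gives $v_{i}\neq v_{j}$, and the second, together with $v_{j}\vDash p_{j}$ and transitivity, rules out $Rv_{i}v_{j}$. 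Running the symmetric pair $(j,i)$ rules out $Rv_{j}v_{i}$, so $v_{i}\perp_{R}v_{j}$. Hence $\{v_{0},\ldots,v_{n}\}$ is an irreflexive antichain of size $n+1$ inside $\mathfrak{F}|_{w}$.

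For the backward contrapositive, I would take an irreflexive antichain $A=\{v_{0},\ldots,v_{n}\}\subseteq\mathfrak{F}|_{w}$ (keeping only $n+1$ of its points if it is larger) and define $\mathfrak{M}=\langle\mathfrak{F},V\rangle$ by $V(p_{i})=\{v_{i}\}$ for each $i\leqslant n$. Then $v_{i}$ is the unique $p_{i}$-point and is irreflexive, so by the opening remark $\mathfrak{M},v_{i}\vDash p_{i}\wedge\Box\neg p_{i}$; and the antichain condition, which bars $Rv_{i}v_{j}$, keeps $v_{i}$ from seeing the unique $p_{j}$-point, so $v_{i}\nvDash p_{j}\vee\Diamond p_{j}$ for each $j\neq i$. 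Consequently every $w$-successor that satisfies some $p_{i}$ equals $v_{i}$ and falsifies $p_{j}\vee\Diamond p_{j}$, so the consequent of $\mathsf{Wid}_{n}^{\bullet}$ fails at $w$. The step I expect to carry the real weight is checking that the antecedent is genuinely witnessed, i.e.\ that $w$ actually sees each $v_{i}$. This reduces to $w\notin A$: in the generated subframe $\mathfrak{F}|_{w}=\{w\}\cup\left.w\right\uparrow$, were $w$ a member of $A$ then any other $u\in A$ would satisfy $Rwu$, contradicting $u\perp_{R}w$; thus $A=\{w\}$ would be forced, which is impossible since $|A|=n+1\geqslant2$. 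Therefore each $v_{i}$ lies in $\left.w\right\uparrow$, the antecedent holds at $w$, and $\mathfrak{M},w\nvDash\mathsf{Wid}_{n}^{\bullet}$.
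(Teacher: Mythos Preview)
Your proof is correct and follows essentially the same route as the paper's: both directions are argued contrapositively, extracting an irreflexive antichain $\{v_{0},\ldots,v_{n}\}$ from a refuting model and, conversely, building a refuting valuation from such an antichain by setting $V(p_{i})=\{v_{i}\}$. Your treatment is in fact slightly more careful than the paper's in the backward direction, where you explicitly verify $w\notin A$ (hence $Rwv_{i}$ for each $i$) so that the antecedent of $\mathsf{Wid}_{n}^{\bullet}$ is genuinely witnessed at $w$; the paper leaves this step implicit.
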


\begin{proof} Suppose that $\mathfrak{M},w\nvDash\mathsf{Wid}_{n}^{\bullet}$
for a model $\mathfrak{M}$ on $\mathfrak{F}$. Because $\mathfrak{M},w\vDash{
{\textstyle \bigwedge\nolimits _{i\leqslant n}}
}\Diamond(p_{i}\wedge\Box\neg p_{i})$, we have that for each $i\leqslant n$, $\mathfrak{M},u_{i}\vDash p_{i}$
for an irreflexive point $u_{i}\in\left.w\right\uparrow $. Consider
any $i,j\leqslant n$ such that $i\neq j$. Because $\mathfrak{M},w\nvDash{\bigvee\nolimits _{0\leqslant i\neq j\leqslant n}}\Diamond(p_{i}\wedge(p_{j}\vee\Diamond p_{j}))$,
it then follows from $Rwu_{i}$ and $\mathfrak{M},u_{i}\vDash p_{i}$
that $\mathfrak{M},u_{i}\nvDash p_{j}\vee\Diamond p_{j}$; it further
follows from $Rwu_{j}$ and $\mathfrak{M},u_{j}\vDash p_{j}$ that
$\mathfrak{M},u_{j}\vDash p_{j}$ and $\mathfrak{M},u_{j}\nvDash p_{i}\vee\Diamond p_{i}$.
So we have that neither $u_{i}=u_{j}$ nor $Ru_{i}u_{j}$ nor $Ru_{j}u_{i}$.
Hence $\{u_{0},\ldots,u_{n}\}$ is an irreflexive antichain in $\mathfrak{F}|_{w}$
whose cardinality is greater than $n$.

Suppose that there is an irreflexive antichain $\{u_{0},\ldots,u_{n}\}$
in $\mathfrak{F}|_{w}$. Let $\mathfrak{M}=\left\langle \mathfrak{F},V\right\rangle $
where $V(p_{i})=u_{i}$ for each $i\leqslant n$. It is easy to see
that $\mathfrak{M},u_{i}\vDash p_{i}\wedge\Box\neg p_{i}$ for each
$i\leqslant n$, and hence $\mathfrak{M},w\vDash{
{\textstyle \bigwedge\nolimits _{i\leqslant n}}
}\Diamond(p_{i}\wedge\Box\neg p_{i})$. For each $v\in\left.w\right\uparrow $ and each $i\leqslant n$,
if $\mathfrak{M},v\vDash p_{i}$, we know by definition of $V$ that
$v=u_{i}$ and $\mathfrak{M},v\nvDash p_{j}\vee\Diamond p_{j}$ for
each $j\leqslant n$ with $j\neq i$. Hence for each $v\in\left.w\right\uparrow $,
$\mathfrak{M},v\nvDash{\bigvee\nolimits _{0\leqslant i\neq j\leqslant n}(}p_{i}\wedge(p_{j}\vee\Diamond p_{j}))$,
from which it follows that $\mathfrak{M},w\nvDash{\bigvee\nolimits _{0\leqslant i\neq j\leqslant n}\Diamond}(p_{i}\wedge(p_{j}\vee\Diamond p_{j}))$,
and hence $\mathfrak{M},w\nvDash\mathsf{Wid}_{n}^{\bullet}$. \end{proof}

The following proposition is a direct consequence of Proposition \ref{prop: frame cond 4 Wid*n}.

\begin{proposition} For each rooted transitive frame $\mathfrak{F}$
and each $n\geqslant1$, $\mathfrak{F}\vDash\mathsf{Wid}_{n}^{\bullet}$
iff $\left\vert A\right\vert \leqslant n$ for each irreflexive antichain
$A$ in $\mathfrak{F}$.\label{prop: frame cond 4 Wid_n-1} \end{proposition}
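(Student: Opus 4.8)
The plan is to derive this proposition directly from Proposition~\ref{prop: frame cond 4 Wid*n} by observing that validity in a rooted frame is equivalent to satisfaction at the root. The key point is that for a \emph{rooted} transitive frame $\mathfrak{F}=\left\langle W,R\right\rangle$ with root $r$, we have $\mathfrak{F}|_{r}=\mathfrak{F}$, since every point is seen by the root (or equals it). Hence the condition ``$\left\vert A\right\vert\leqslant n$ for each irreflexive antichain $A$ in $\mathfrak{F}|_{r}$'' is literally the condition ``$\left\vert A\right\vert\leqslant n$ for each irreflexive antichain $A$ in $\mathfrak{F}$,'' which is the desired frame condition.

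First I would establish the elementary bridge between frame-validity and pointwise satisfaction: for a rooted transitive frame $\mathfrak{F}$ with root $r$, $\mathfrak{F}\vDash\mathsf{Wid}_{n}^{\bullet}$ iff $\mathfrak{F},r\vDash\mathsf{Wid}_{n}^{\bullet}$. The left-to-right direction is immediate from the definition of validity. For right-to-left, I would invoke the standard fact that in a rooted frame, truth (hence validity of a formula) is determined by the generated subframe at the root; more directly, since $\mathsf{Wid}_{n}^{\bullet}$ is valid at the root and the frame is generated by $r$, and validity is preserved in generated subframes, one concludes $\mathfrak{F}\vDash\mathsf{Wid}_{n}^{\bullet}$. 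I expect this to be the only genuinely ``proof-like'' step, and even it is routine preliminary material the excerpt explicitly assumes the reader knows (``We assume the reader's familiarity with the related theorems on preservation of truth and validity under these frame/model constructions'').

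Then I would chain the two equivalences. By the bridge above, $\mathfrak{F}\vDash\mathsf{Wid}_{n}^{\bullet}$ iff $\mathfrak{F},r\vDash\mathsf{Wid}_{n}^{\bullet}$; by Proposition~\ref{prop: frame cond 4 Wid*n} applied with $w=r$, this holds iff $\left\vert A\right\vert\leqslant n$ for each irreflexive antichain $A$ in $\mathfrak{F}|_{r}$; and since $\mathfrak{F}|_{r}=\mathfrak{F}$, this is exactly the stated condition. There is essentially no obstacle here: the proposition is flagged in the excerpt as ``a direct consequence of Proposition~\ref{prop: frame cond 4 Wid*n},'' so the entire content is the substitution $w=r$ together with the identification of the root-generated subframe with the whole frame. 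The only subtlety worth a sentence is confirming that a root of a transitive frame does indeed generate the whole frame (so that no irreflexive antichain is lost or gained in passing to $\mathfrak{F}|_{r}$), which follows directly from the definition of a root.
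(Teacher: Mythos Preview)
Your overall approach—deriving the result from Proposition~\ref{prop: frame cond 4 Wid*n}—matches the paper's intent (the paper gives no separate proof, simply calling it a direct consequence). However, your bridge lemma is not valid as stated: it is \emph{not} true in general that $\mathfrak{F}\vDash\phi$ iff $\mathfrak{F},r\vDash\phi$ for a rooted transitive frame $\mathfrak{F}$ with root $r$. For instance, take $\mathfrak{F}$ to be the two-point irreflexive chain with root $r$ seeing a dead end $a$; then $\mathfrak{F},r\vDash\Diamond\top$ but $\mathfrak{F},a\nvDash\Diamond\top$, so $\mathfrak{F}\nvDash\Diamond\top$. The preservation fact you cite (validity transfers to generated subframes) runs in the wrong direction to help here, and the identification $\mathfrak{F}|_{r}=\mathfrak{F}$ is tautological—it does not by itself lift pointwise validity at $r$ to global validity.

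The fix is to apply Proposition~\ref{prop: frame cond 4 Wid*n} at \emph{every} point, not just at the root. If $\mathfrak{F}\vDash\mathsf{Wid}_{n}^{\bullet}$, then in particular $\mathfrak{F},r\vDash\mathsf{Wid}_{n}^{\bullet}$, so every irreflexive antichain in $\mathfrak{F}|_{r}=\mathfrak{F}$ has size at most $n$. Conversely, if every irreflexive antichain in $\mathfrak{F}$ has size at most $n$, then for each $w\in W$ every irreflexive antichain in $\mathfrak{F}|_{w}$ is also an irreflexive antichain in $\mathfrak{F}$ and hence has size at most $n$; by Proposition~\ref{prop: frame cond 4 Wid*n} this gives $\mathfrak{F},w\vDash\mathsf{Wid}_{n}^{\bullet}$ for all $w$, i.e., $\mathfrak{F}\vDash\mathsf{Wid}_{n}^{\bullet}$. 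This is still entirely routine, but it is the correct routine.
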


\subsection{Well-quasi-orders}

Let $A$ be any set. A binary relation $R$ on $A$ is a \emph{quasi-order
}iff it is reflexive and transitive. Let $\preceq$ be a quasi-order
on $A$. We say $\preceq$ is a \emph{well-quasi-order} (in short:
\emph{wqo}) iff every infinite sequence $(a_{k})_{k\in\omega}$ of
elements of $A$ contains an infinite subsequence $(a_{k})_{k\in I\subseteq\omega}$
of it such that $a_{i}\preceq a_{j}$ for all $i,j\in I$ with $i<j$.\footnote{Another well-known definition of \emph{well-quasi-order} is as follows:
$\preceq$ is a \emph{well-quasi-order} iff every infinite sequence
$(a_{k})_{k\in\omega}$ of elements of $A$ contains two element $a_{i},a_{j}$
such that $a_{i}\preceq a_{j}$ with $i<j$. These two definitions
are equivalent, and a proof of their equivalence can be found in Lemma
2.5 in \cite{Gallier1991What}.} Note that any quasi-order on $A$ is \emph{wqo} if $A$ is finite,
and that $\preceq$ is a \emph{wqo} on any $A'\subseteq A$ if $\preceq$
is a \emph{wqo} on $A$. Let $\leqslant$ be the usual less-than-order
on $\omega$. We fix a new order $\preccurlyeq$ on $\omega$ as follows:
$m\preccurlyeq n$ iff either $m=n=0$ or $0<m\leqslant n$. 

\begin{fact} Both $\leqslant$ and $\preccurlyeq$ are \emph{wqo}
on $\omega$. \label{fact: infinite seq =00003D> inf sub seq, black chain-1}
\end{fact}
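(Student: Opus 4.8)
The plan is to treat the two relations separately, first checking that each is a quasi-order and then exhibiting, for an arbitrary infinite sequence, the required infinite ascending subsequence. That both are reflexive and transitive is quick: for $\leqslant$ it is immediate, and for $\preccurlyeq$ it is a short case check whose only subtlety is the special value $0$, which is $\preccurlyeq$-comparable with no positive integer, while on the positive integers $\preccurlyeq$ coincides with $\leqslant$.

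For $\leqslant$, the key input is that $\langle\omega,\leqslant\rangle$ is well-founded, i.e.\ admits no infinite strictly descending chain. From this I would extract a non-decreasing infinite subsequence of any given $(a_k)_{k\in\omega}$ by the standard ``peak'' argument: call an index $m$ a peak if $a_k<a_m$ for every $k>m$; there can be only finitely many peaks, since infinitely many would yield an infinite strictly descending chain, so beyond the last peak one can inductively choose $k_0<k_1<\cdots$ with $a_{k_0}\leqslant a_{k_1}\leqslant\cdots$. By transitivity $a_{k_i}\leqslant a_{k_j}$ for all $i<j$, so $\{k_0,k_1,\dots\}$ is the desired index set. (Alternatively, invoking the equivalence of the two definitions of \emph{wqo} recorded in the footnote, it suffices to find some $i<j$ with $a_i\leqslant a_j$, as otherwise $a_0>a_1>\cdots$ would descend forever.)

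For $\preccurlyeq$, I would split on how often $0$ occurs in $(a_k)_{k\in\omega}$. If $0$ occurs infinitely often, the set of indices $k$ with $a_k=0$ is infinite and any two of its terms are $\preccurlyeq$-related by reflexivity, so it serves directly as the sought subsequence. If $0$ occurs only finitely often, I would pass to a tail $(a_k)_{k\geqslant N}$ consisting entirely of positive integers; there $\preccurlyeq$ coincides with $\leqslant$, and since $\leqslant$ is a \emph{wqo} on $\omega$ it is a \emph{wqo} on the positive integers as well (by the remark that \emph{wqo}-ness restricts to subsets), so this tail has an infinite $\leqslant$-, hence $\preccurlyeq$-, ascending subsequence.

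There is no genuinely hard step here; the statement is elementary. The only things requiring care are verifying transitivity of $\preccurlyeq$ cleanly around the isolated value $0$, and remembering that the \emph{wqo} condition demands an infinite subsequence that is a $\preceq$-chain in the strong sense ($a_i\preceq a_j$ for \emph{all} $i<j$, not merely consecutive pairs) --- which in both cases is secured by the transitivity of the order.
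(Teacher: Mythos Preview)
Your argument is correct in every detail: the peak argument for $\leqslant$ is the standard one, and the case split on whether $0$ occurs infinitely often cleanly handles $\preccurlyeq$, since on the positive integers $\preccurlyeq$ coincides with $\leqslant$. The paper, however, gives no proof at all---it records this as a \emph{Fact} and moves on---so there is no approach to compare against; you have simply supplied the routine verification that the paper elides.
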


The following lemma is from \cite{Nash1987On}, and the reader can
also refer to Lemma 2.6 in \cite{Gallier1991What}.

\begin{lemma}\label{lem:card product wqo}Let $\preceq_{1}$ and
$\preceq_{2}$ be wqo on set $A_{1}$ and $A_{2}$ respectively, and
let $\preceq$ be the order on $A_{1}\times A_{2}$ defined as follows:
$\left\langle a_{1},a_{2}\right\rangle \preceq\left\langle a'_{1},a'_{2}\right\rangle \text{ iff }a_{1}\preceq_{1}a'_{1}\text{ and }a_{2}\preceq_{2}a'_{2}.$
Then $\preceq$ is a wqo on $A_{1}\times A_{2}$.

\end{lemma}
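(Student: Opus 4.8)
The plan is to use directly the ``infinite ascending subsequence'' formulation of \emph{wqo} adopted in the definition above, which makes the product case almost immediate via a two-stage extraction of subsequences.

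First I would check that $\preceq$ is a quasi-order on $A_{1}\times A_{2}$. Reflexivity of $\preceq$ follows from the reflexivity of $\preceq_{1}$ and $\preceq_{2}$, and transitivity follows from their transitivity, both applied coordinatewise; these are routine from the componentwise definition $\left\langle a_{1},a_{2}\right\rangle \preceq\left\langle a'_{1},a'_{2}\right\rangle$ iff $a_{1}\preceq_{1}a'_{1}$ and $a_{2}\preceq_{2}a'_{2}$.

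The heart of the argument is the wqo condition. Let $(\left\langle a_{k},b_{k}\right\rangle)_{k\in\omega}$ be an arbitrary infinite sequence of elements of $A_{1}\times A_{2}$. Applying the wqo-ness of $\preceq_{1}$ to the sequence $(a_{k})_{k\in\omega}$, I obtain an infinite $I\subseteq\omega$ such that $a_{i}\preceq_{1}a_{j}$ for all $i,j\in I$ with $i<j$. Now I consider the infinite sequence $(b_{k})_{k\in I}$ of elements of $A_{2}$; applying the wqo-ness of $\preceq_{2}$ to it, I obtain an infinite $J\subseteq I$ such that $b_{i}\preceq_{2}b_{j}$ for all $i,j\in J$ with $i<j$. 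For all $i,j\in J$ with $i<j$ I then have, since $J\subseteq I$, both $a_{i}\preceq_{1}a_{j}$ and $b_{i}\preceq_{2}b_{j}$, whence $\left\langle a_{i},b_{i}\right\rangle \preceq\left\langle a_{j},b_{j}\right\rangle$ by definition of $\preceq$. Thus $(\left\langle a_{k},b_{k}\right\rangle)_{k\in J}$ is the desired infinite ascending subsequence, and $\preceq$ is a wqo on $A_{1}\times A_{2}$.

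I do not anticipate any genuine obstacle, since the strong ``ascending subsequence'' form of the definition does all the work; the only point requiring a little care is the nesting of the two extractions---the second subsequence must be drawn as a sub-subsequence from within the index set $I$ already fixed by the first, so that the first-coordinate comparisons established on $I$ are inherited by $J$. Had we instead worked from the weaker ``some pair $a_{i}\preceq a_{j}$ with $i<j$'' formulation of \emph{wqo} (the one in the footnote), the argument would require a Ramsey-type or minimal-bad-sequence detour; the equivalence of the two formulations, cited above from \cite{Gallier1991What}, is precisely what lets us avoid that here.
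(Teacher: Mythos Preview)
Your argument is correct and is exactly the standard two-stage extraction proof; the paper itself does not prove this lemma but simply cites \cite{Nash1987On} and \cite{Gallier1991What}, where the same idea appears. There is nothing to add.
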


Let $A$ be any set. We use $A^{*}$ for set of all finite sequences
(or strings) over $A$, use $\ell(s)$ for the length\emph{ }of the
sequence $s$, and for each $i\leqslant k$, we will use $\#_{i}(s)$
for the $i$-th member of $s$, starting from 0. For each $n\geqslant0$,
we fix $\mathsf{Seq}_{\leqslant n}(A)=\{s\in A^{*}:\ell(s)\leqslant n\}$.
Let $\preceq$ be a quasi-order on $A$. We define the orders $\trianglelefteq$
and $\ll$ on $A^{*}$ as follows: 
\begin{itemize}
\item for all $s,t\in A^{*}$, $s\trianglelefteq t$ iff $\ell(s)=\ell(t)$,
and for each $i<\ell(s)$, $\#_{i}(s)\preceq\#_{i}(t)$.\label{def: black triangle rel-1} 
\item for all $s,t\in A^{*}$ where $s=(a_{i})_{i<k}$ and $t=(b_{i})_{i<n}$,
$t\ll s$ iff either $n=k=0$, or $n\geqslant k>0$ and $a_{k}\preceq b_{n}$
and $s\trianglelefteq t^{\prime}$ for a subsequence $t^{\prime}$
of $t$.
\end{itemize}
It is easy to see that both $\trianglelefteq$ and $\mathbf{\ll}$
are quasi-orders on $A^{*}$. Furthermore, Lemma \ref{lem:card product wqo}
can be applied to show the following Lemma by a trivial induction.

\begin{lemma} If $\preceq$ is a wqo on $A$, then \textbf{$\trianglelefteq$}
is a wqo on $\mathsf{Seq}_{\leqslant n}(A)$ for all $n\geqslant0$.\label{prop: infinite seq =00003D> inf sub seq, black chain-2}
\end{lemma}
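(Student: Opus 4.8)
The plan is to exploit the fact that $\trianglelefteq$ relates two strings only when they have the same length, so on $\mathsf{Seq}_{\leqslant n}(A)$ the relation splits into the $n+1$ mutually incomparable blocks consisting of the strings of each fixed length $k\in\{0,1,\ldots,n\}$. I would first show, by induction on $k$, that $\trianglelefteq$ restricted to the strings of length exactly $k$ is a wqo, and then glue the finitely many blocks together by a pigeonhole argument.

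For the inductive step, identify each string $s$ of length $k+1$ with the pair $\left\langle s',\#_{k}(s)\right\rangle$, where $s'$ is the initial segment of $s$ of length $k$. Under this bijection, $\trianglelefteq$ on the length-$(k+1)$ strings corresponds exactly to the product order of Lemma \ref{lem:card product wqo} formed from $\trianglelefteq$ on the length-$k$ strings and $\preceq$ on $A$: indeed, since $\ell(s)=k+1$, we have $s\trianglelefteq t$ iff $\#_{i}(s)\preceq\#_{i}(t)$ for all $i\leqslant k$, which holds iff $s'\trianglelefteq t'$ and $\#_{k}(s)\preceq\#_{k}(t)$. The base case $k=0$ is trivial, there being a unique empty string (and any quasi-order on a finite set is a wqo), while the case $k=1$ is just $\preceq$ on $A$. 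The inductive hypothesis, together with the assumption that $\preceq$ is a wqo and Lemma \ref{lem:card product wqo}, then yields that $\trianglelefteq$ is a wqo on the strings of length $k+1$.

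To finish, I would take an arbitrary infinite sequence of elements of $\mathsf{Seq}_{\leqslant n}(A)$. Since each term has one of the finitely many lengths $0,\ldots,n$, the pigeonhole principle gives a length $k\leqslant n$ shared by infinitely many terms, hence an infinite subsequence lying entirely in the length-$k$ block; applying the wqo just established for that block produces a further infinite subsequence increasing under $\trianglelefteq$, as required by the definition of wqo. The only point needing care is this combination step: because $\trianglelefteq$ never compares strings of different lengths, one cannot extract an increasing subsequence directly, and the pigeonhole restriction to a single length is precisely what makes the per-length wqos applicable. This is the main (though still routine) obstacle; the induction itself is immediate from the product lemma.
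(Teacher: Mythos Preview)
Your proposal is correct and follows essentially the same approach as the paper: the paper merely states that Lemma~\ref{lem:card product wqo} ``can be applied to show the following Lemma by a trivial induction,'' and your write-up spells out exactly that induction (on the length $k$), together with the pigeonhole step needed to pass from the fixed-length blocks to all of $\mathsf{Seq}_{\leqslant n}(A)$. The paper leaves the latter step implicit, but it is the same argument.
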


The following theorem is a slightly stronger formulation of Theorem
3.2 in \cite{Gallier1991What}, however the same proof can be applied
here. A restricted version of the theorem, where $A$ is the set of
natural number, is proved in \cite{Fine-71-logic-containing-S4-3}
along the same line as \cite{Gallier1991What}. 

\begin{theorem} If $\preceq$ is a wqo on $A$, then \textbf{$\ll$}
is a wqo on $A^{*}$.\label{lem: inf seq has 2-chain as sub seq-1}
\end{theorem}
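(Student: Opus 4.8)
The plan is to prove this Higman-type result by the Nash--Williams \emph{minimal bad sequence} method, using the equivalent two-element characterization of \emph{wqo} recorded in the footnote: it suffices to show that no infinite sequence $(s_{m})_{m\in\omega}$ over $A^{*}$ is \emph{bad}, where ``bad'' means that no pair of indices $i<j$ is $\ll$-related in the sense demanded by the \emph{wqo} definition. Assuming toward a contradiction that a bad sequence exists, I would exploit the well-foundedness of the length function $\ell\colon A^{*}\to\omega$ to build, recursively, a \emph{length-minimal} bad sequence: choose $s_{m}$ of least possible length among all strings extending $s_{0},\dots,s_{m-1}$ to a bad sequence. Before the main step I would dispose of the empty string, noting that a bad sequence can contain at most one empty term (two empty terms are $\ll$-related through the $n=k=0$ clause); discarding it, I may assume every $s_{m}$ is nonempty and write $s_{m}=(a_{m})\frown r_{m}$, splitting off the \emph{first} letter $a_{m}\in A$ and the remainder $r_{m}\in A^{*}$.

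Next, invoking the hypothesis that $\preceq$ is a \emph{wqo} on $A$, I would extract an infinite $K=\{k_{0}<k_{1}<\cdots\}$ along which the first letters form a $\preceq$-chain $a_{k_{0}}\preceq a_{k_{1}}\preceq\cdots$, and then form the auxiliary sequence $s_{0},\dots,s_{k_{0}-1},r_{k_{0}},r_{k_{1}},\dots$. This sequence agrees with the minimal bad sequence below position $k_{0}$ but is strictly shorter there, so by minimality it cannot be bad; hence it contains a $\ll$-related pair, and a three-case analysis produces a $\ll$-related pair of the original sequence, the desired contradiction. In case (i) both members lie in the initial segment $s_{0},\dots,s_{k_{0}-1}$, which is already a contradiction. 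In case (ii) the pair is $s_{p}$ (with $p<k_{0}$) and some $r_{k_{j}}$; since $r_{k_{j}}$ is a subsequence of $s_{k_{j}}$ and---crucially---shares its last letter with $s_{k_{j}}$, the relation propagates to a $\ll$-relation between $s_{p}$ and $s_{k_{j}}$. In case (iii) the pair is $r_{k_{i}},r_{k_{j}}$ with $i<j$, and combining it with $a_{k_{i}}\preceq a_{k_{j}}$ yields a $\ll$-relation between $s_{k_{i}}$ and $s_{k_{j}}$ via the monotonicity of $\ll$ under prepending $\preceq$-related letters, read off directly from the definitions of $\trianglelefteq$ and $\ll$ and underwritten for the $\trianglelefteq$-component by Lemma~\ref{prop: infinite seq =00003D> inf sub seq, black chain-2}.

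The reason I would split off the \emph{first} letter rather than the last is precisely the extra anchoring requirement $a_{k}\preceq b_{n}$ on the last letters in the definition of $\ll$: removing the first letter preserves the last letter of every string, so the anchor survives intact in case (ii), whereas removing the last letter would break it. I expect the careful bookkeeping of this last-letter condition across all three cases, together with the boundary behaviour at strings of length one (where a remainder $r_{k}$ degenerates to the empty string and the $n=k=0$ clause must be invoked), to be the main obstacle; this is exactly the point at which the present argument departs from the plain Higman embedding, while the $\trianglelefteq$-verifications themselves are routine.
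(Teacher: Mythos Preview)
Your approach is correct and is essentially the one the paper has in mind: the paper does not supply its own argument but defers to Theorem~3.2 of \cite{Gallier1991What}, whose proof is precisely the Nash--Williams minimal bad sequence construction you outline, and your handling of the extra last-letter anchor $a_{k}\preceq b_{n}$ (by splitting off the \emph{first} letter so that the last letter is preserved) is exactly the adaptation needed to pass from plain Higman to the present strengthened relation~$\ll$. One cosmetic remark: your appeal to Lemma~\ref{prop: infinite seq =00003D> inf sub seq, black chain-2} in case~(iii) is not quite to the point, since what you need there is the elementary monotonicity of $\trianglelefteq$ and $\ll$ under prepending $\preceq$-related letters, which follows directly from the definitions rather than from the \emph{wqo} property of~$\trianglelefteq$.
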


A \emph{tree} is a pair $\left\langle T,\leq\right\rangle $, in which
$T$ is a nonempty set and $\leq$ is a partial ordering on $T$ satisfying
\emph{downward connectedness} ($\forall m\forall m^{\prime}\exists w(w\leq m\wedge w\leq m^{\prime})$)
and \emph{no downward branching} ($\forall m\forall w\forall w^{\prime}(w\leq m\wedge w^{\prime}\leq m\rightarrow w\leq w^{\prime}\vee w^{\prime}\leq w)$).
$w<u$ is introduced as $w\leq u\wedge w\neq u$. Let $\mathfrak{T}=\left\langle T,\leq\right\rangle $
be any tree. Note that the set $anc_{\mathfrak{T}}(w)=\{u\in T:u\leq w\}$
is a chain under $\leq$, and a finite tree always has a unique root.
We use $dom(\mathfrak{T})$ for the domain of $\mathfrak{T}$, and
use $root(\mathfrak{T})$ for the root of $\mathfrak{T}$ when it
exists. For any $w\in T$, \emph{the level of} $w$ in $\mathfrak{T}$
is $lev_{\mathfrak{T}}(w)=\left|anc_{\mathfrak{T}}(w)\right|$, the
\emph{set of immediate successors} of $w$ is $suc_{\mathfrak{T}}(w)=\{u\in T:w<u\wedge\neg\exists v(w<v<u)\}$,
and the \emph{height} of $\mathbf{\mathfrak{T}}$ is $heit(\mathfrak{T})=max\{lev_{\mathbf{\mathfrak{T}}}(w):w\in T\}$.
Given a set $\Sigma$ of labels, a \emph{$\Sigma$-tree }is a pair
$\left\langle \mathfrak{T},\tau\right\rangle $, where $\mathfrak{T}$
is a tree and $\tau$ is a labeling function on $\mathfrak{T}$ from
$dom(\mathfrak{T})$ to $\Sigma$. Let $\mathfrak{t}=\left\langle \mathfrak{T},\tau\right\rangle $\emph{
}be any\emph{ $\Sigma$}-tree\emph{ }where $\mathfrak{T}=\left\langle T,\leq\right\rangle $\emph{.
A $\Sigma$}-tree\emph{ $\mathfrak{t}$} is \emph{finite }if its underlying
tree $\mathfrak{T}$ is finite, and the \emph{height (domain, root,
etc.) }can be level up to \emph{$\Sigma$}-trees from their underlying
trees naturally. For each $\Delta\subseteq\Sigma$, $dom(\mathfrak{t})^{\Delta}=\{w\in dom(\mathfrak{t}):\tau(w)\in\Delta\}$,
and we use $dom(\mathfrak{t})^{l}$ for $dom(\mathfrak{t})^{\{l\}}$. 

In the following, we consider only finite \emph{$\omega$}-trees,
and use $\mathbf{T^{\omega}}$ for the set of all finite \emph{$\omega$}-trees.
For each $m,n\geqslant1$, we fix 
\begin{align*}
 & \mathbf{T}_{=m,<n}^{\omega}=\{\mathfrak{t}\in\mathbf{T}^{\omega}:heit(\mathfrak{t})=m\wedge\left|dom(\mathfrak{t})^{0}\right|<n\},\\
 & \mathbf{T}_{\leqslant m,<n}^{\omega}=\bigcup_{1\leqslant i\leqslant n}\mathbf{T}_{=m,<n}^{\omega}.
\end{align*}
Note that $\mathbf{T}_{=1,<n}^{\omega}=\mathbf{T}_{\leqslant1,<n}^{\omega}$
and all $\omega$-trees in them have only one node, i.e. the root.
It is convenient for our discussion to represent a \emph{$\Sigma$}-tree
$\mathfrak{t}=\left\langle \mathfrak{T},\tau\right\rangle $\emph{
}as the following triple: 
\begin{equation}
\mathfrak{t}=\left\langle \left(root(\mathfrak{t}),\tau(root(\mathfrak{t}))\right),\left(\mathfrak{t}_{1},\ldots,\mathfrak{t}_{m}\right),\left(\mathfrak{t}_{m+1},\ldots,\mathfrak{t}_{m+n}\right)\right\rangle ,\label{eq: tree representation}
\end{equation}
where 
\begin{itemize}
\item $\mathfrak{t}_{1},\ldots,\mathfrak{t}_{m}$ are all subtrees of $\mathfrak{t}$
generated by an element of $\{w\in suc_{\mathfrak{t}}(root(\mathfrak{t})):\tau(w)=0\}$, 
\item $\mathfrak{t}_{m+1},\ldots,\mathfrak{t}_{m+n}$ are all subtrees of
$\mathfrak{t}$ generated by an element of $\{w\in suc_{\mathfrak{t}}(root(\mathfrak{t})):\tau(w)>0\}$,
and 
\item $\tau_{m+n}(root(\mathfrak{t}_{m+n}))=min\{\tau_{i}(root(\mathfrak{t}_{i})):m\leqslant i\leqslant m+n\}$,
in which $\tau_{i}$ is the labeling function in $\mathfrak{t}_{i}$. 
\end{itemize}
We call the triple above a \emph{standard representation triple of}
$\mathfrak{t}$. Note that the last two elements of a standard representation
triple could be the empty sequence, such as when the represented tree
has only one-node. Recall that $m\preccurlyeq n$ iff either $m=n=0$
or $0<m\leqslant n$. We define $\sqsubseteq$ on $\mathbf{T^{\omega}}$
inductively as follows:
\begin{enumerate}
\item for any $\omega$-tree $\mathfrak{t}=\left\langle \left(r,s\right),\left(\right),\left(\right)\right\rangle $
and any $\omega$-tree $\mathfrak{t}'$, $\mathfrak{t}\sqsubseteq\mathfrak{t}'$
iff $\mathfrak{t}'$ is a one-node tree and $s\preccurlyeq\tau'(root(\mathfrak{t}'))$,
where $\tau'$ is the labeling function in $\mathfrak{t}'$;
\item for any $\omega$-tree $\mathfrak{t}=\left\langle \left(r,s\right),\left(\mathfrak{t}_{1},\ldots,\mathfrak{t}_{m}\right),\left(\mathfrak{t}_{m+1},\ldots,\mathfrak{t}_{m+n}\right)\right\rangle $
and any $\omega$-tree $\mathfrak{t}'=\left\langle \left(r',s'\right),\left(\mathfrak{t}'_{1},\ldots,\mathfrak{t}'_{k}\right),\left(\mathfrak{t}'_{k+1},\ldots,\mathfrak{t}'_{k+l}\right)\right\rangle $,
$\mathfrak{t}\sqsubseteq\mathfrak{t}'$ iff $s\preccurlyeq s'$, and 
\begin{enumerate}
\item $m=k$ and for each $1\leqslant i\leqslant m$, $\mathfrak{t}_{i}\sqsubseteq\mathfrak{t}'_{i}$;
\item either $l=n=0$, or $l\geqslant n>0$ and $\mathfrak{t}_{m+n}\sqsubseteq\mathfrak{t}'_{k+l}$
and there are $j_{m+1},\ldots,j_{m+n}$ such that $k+1\leqslant j_{m+1}<\cdots<j_{m+n}\leqslant k+l$,
and $\mathfrak{t}_{h}\sqsubseteq\mathfrak{t}'_{j_{h}}$ for each $h$
with $m+1\leqslant h\leqslant m+n$. 
\end{enumerate}
\end{enumerate}
Note that if we replace $\sqsubseteq$ with $\preceq$ in (a) and
(b), then they become the exactly same as definition of $\trianglelefteq$
and definition of $\mathbf{\ll}$ respectively.

\begin{theorem} For all $m,n\geqslant1$, \textbf{$\sqsubseteq$}
is a wqo on $\mathbf{T}_{\leqslant m,<n}^{\omega}$.\label{thm: wqo on trees}
\end{theorem}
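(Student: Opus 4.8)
The plan is to induct on the height $m$, keeping $n$ fixed throughout, since (as checked below) every subtree appearing in the standard representation of a tree of height $m+1$ again has fewer than $n$ zero-labeled nodes. For the base case $m=1$, every member of $\mathbf{T}_{\leqslant1,<n}^{\omega}$ is a one-node tree, so by clause (i) of the definition of $\sqsubseteq$ the relation $\mathfrak{t}\sqsubseteq\mathfrak{t}'$ reduces to $\tau(root(\mathfrak{t}))\preccurlyeq\tau'(root(\mathfrak{t}'))$; since $\preccurlyeq$ is a wqo on $\omega$ by Fact \ref{fact: infinite seq =00003D> inf sub seq, black chain-1}, so is $\sqsubseteq$ on $\mathbf{T}_{\leqslant1,<n}^{\omega}$.

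For the inductive step I would assume $\sqsubseteq$ is a wqo on $A:=\mathbf{T}_{\leqslant m,<n}^{\omega}$ and analyze a tree $\mathfrak{t}\in\mathbf{T}_{\leqslant m+1,<n}^{\omega}$ through its standard representation triple $\langle(root(\mathfrak{t}),s),(\mathfrak{t}_{1},\ldots,\mathfrak{t}_{p}),(\mathfrak{t}_{p+1},\ldots,\mathfrak{t}_{p+q})\rangle$ (I write $p,q$ for the numbers of zero- and positive-labeled children, to avoid the clash with the parameters $m,n$ of the theorem). Each listed subtree has height $\leqslant m$ and no more zero-labeled nodes than $\mathfrak{t}$, hence lies in $A$. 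The crucial observation is that each zero-rooted subtree among $\mathfrak{t}_{1},\ldots,\mathfrak{t}_{p}$ carries a distinct zero-labeled node of $\mathfrak{t}$ at its root, so $p\leqslant\left|dom(\mathfrak{t})^{0}\right|<n$; thus $(\mathfrak{t}_{1},\ldots,\mathfrak{t}_{p})\in\mathsf{Seq}_{\leqslant n-1}(A)$, whereas $(\mathfrak{t}_{p+1},\ldots,\mathfrak{t}_{p+q})$ is an unrestricted string in $A^{*}$. This is exactly where the hypothesis $\left|dom(\mathfrak{t})^{0}\right|<n$ is used, and it is indispensable: the zero-children are compared by the length-preserving order $\trianglelefteq$ of clause (a), which is a wqo only on bounded-length sequences, while the unboundedly many positive children are compared by the Higman-style order $\ll$ of clause (b).

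I would then encode $\mathfrak{t}$ by $e(\mathfrak{t})=\langle s,(\mathfrak{t}_{1},\ldots,\mathfrak{t}_{p}),(\mathfrak{t}_{p+1},\ldots,\mathfrak{t}_{p+q})\rangle$ in the product $\omega\times\mathsf{Seq}_{\leqslant n-1}(A)\times A^{*}$. As noted after the definition of $\sqsubseteq$, clauses (a) and (b) are verbatim the defining conditions of $\trianglelefteq$ and $\ll$ with $\sqsubseteq$ as the base order, so $\mathfrak{t}\sqsubseteq\mathfrak{t}'$ holds iff $e(\mathfrak{t})$ and $e(\mathfrak{t}')$ are related componentwise by $\preccurlyeq$, $\trianglelefteq$ and $\ll$ (the one-node case of clause (i) is subsumed, since it forces $p=q=0$ and $\mathfrak{t}'$ one-node). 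By the inductive hypothesis $\sqsubseteq$ is a wqo on $A$, so $\trianglelefteq$ is a wqo on $\mathsf{Seq}_{\leqslant n-1}(A)$ by Lemma \ref{prop: infinite seq =00003D> inf sub seq, black chain-2} and $\ll$ is a wqo on $A^{*}$ by Theorem \ref{lem: inf seq has 2-chain as sub seq-1}; together with $\preccurlyeq$ on $\omega$, two applications of Lemma \ref{lem:card product wqo} make the componentwise product order a wqo. Applying the wqo property to the encodings of any infinite sequence in $\mathbf{T}_{\leqslant m+1,<n}^{\omega}$ then yields an infinite subsequence along which all three components, and hence the trees themselves, $\sqsubseteq$-increase.

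The step I expect to be the main obstacle is the bookkeeping of the inductive step: pinning down the bound $p<n$ and verifying that clauses (a) and (b) match $\trianglelefteq$ and $\ll$ exactly, including the minimal-label normalization of the last positive subtree and the degenerate one-node case, so that $e$ genuinely pulls back the product wqo. Once that correspondence is in place, the conclusion is immediate from the three quoted preservation results.
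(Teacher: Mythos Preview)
Your proposal is correct and follows essentially the same approach as the paper: induction on the height, with the base case handled by $\preccurlyeq$ being a wqo on $\omega$, and the inductive step handled by combining the inductive hypothesis with Lemma~\ref{prop: infinite seq =00003D> inf sub seq, black chain-2} for the bounded-length sequence of zero-rooted subtrees and Theorem~\ref{lem: inf seq has 2-chain as sub seq-1} for the unrestricted sequence of positive-rooted subtrees. The only organizational difference is that the paper refines an infinite sequence through four successive subsequences (root labels, then number of zero-children, then the $\trianglelefteq$-component, then the $\ll$-component), whereas you package the same data as a single encoding into $\omega\times\mathsf{Seq}_{\leqslant n-1}(A)\times A^{*}$ and invoke Lemma~\ref{lem:card product wqo} twice; these are two ways of saying the same thing.
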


\begin{proof}It suffices to show that for all $m,n\geqslant1$, $\sqsubseteq$
is a wqo on $\mathbf{T}_{=m,<n}^{\omega}$. We prove it by induction
on $m$. The base case ($m=1$) holds because of Fact \ref{fact: infinite seq =00003D> inf sub seq, black chain-1}.
Consider $m=k+1$. Suppose that for all $n\geqslant1$, $\sqsubseteq$
is a wqo on $\mathbf{T}_{=k,<n}^{\omega}$. Let $n\geqslant1$ and
let $(\mathfrak{t}_{i})_{i\in\omega}$ be any infinite sequence of
elements from $\mathbf{T}_{=k+1,<n}^{\omega}$, where $\mathfrak{t}_{i}=\left\langle \left(r_{i},s_{i}\right),\left(\mathfrak{t}_{1}^{i},\ldots,\mathfrak{t}_{m_{i}}^{i}\right),\left(\mathfrak{t}_{m_{i}+1}^{i},\ldots,\mathfrak{t}_{m_{i}+n_{i}}^{i}\right)\right\rangle $
for each $i\in\omega$. We have by Fact \ref{fact: infinite seq =00003D> inf sub seq, black chain-1}
that there is an infinite subsequence $(\mathfrak{t}_{i})_{i\in I_{1}}$
of $(\mathfrak{t}_{i})_{i\in\omega}$ such that $(s_{i})_{i\in I_{1}}$
is an infinite $\preccurlyeq$-chain. Since $\left|dom(\mathfrak{t}_{i})^{0}\right|<n$
for each $i\in\omega$, there is an infinite subsequence $(\mathfrak{t}_{i})_{i\in I_{2}}$
of $(\mathfrak{t}_{i})_{i\in I_{1}}$ such that $m_{i}=m_{j}$ for
all $i,j\in I_{2}$. It then follows from Lemma \ref{fact: infinite seq =00003D> inf sub seq, black chain-1}
and supposition that there is an infinite subsequence $(\mathfrak{t}_{i})_{i\in I_{3}}$
of $(\mathfrak{t}_{i})_{i\in I_{2}}$ such that for each $i<j\in I_{3}$,
$m_{i}=m_{j}$ and for each $1\leqslant h\leqslant m_{i}$, $\mathfrak{t}_{h}^{i}\sqsubseteq\mathfrak{t}_{h}^{j}$.
Apply Theorem \ref{lem: inf seq has 2-chain as sub seq-1} and supposition,
we obtain that there is an infinite subsequence $(\mathfrak{t}_{i})_{i\in I_{4}}$
of $(\mathfrak{t}_{i})_{i\in I_{3}}$ such that for each $i<j\in I_{4}$,
either $n_{i}=n_{j}=0$, or $n_{j}\geqslant n_{i}>0$ and $\mathfrak{t}_{m_{i}+n_{i}}^{i}\sqsubseteq\mathfrak{t}_{m_{j}+n_{j}}^{j}$
and there are $j_{m_{i}+1},\ldots,j_{m_{i}+n_{i}}$ such that $m_{j}+1\leqslant j_{m_{i}+1}<\cdots<j_{m_{i}+n_{i}}\leqslant m_{j}+n_{j}$,
and $\mathfrak{t}_{h}^{i}\sqsubseteq\mathfrak{t}^{j}{}_{j_{h}}$ for
each $h$ with $m_{i}+1\leqslant h\leqslant m_{i}+n_{i}$. By definition
of $\sqsubseteq$, $(\mathfrak{t}_{i})_{i\in I_{4}}$ is an infinite
$\sqsubseteq$-chain, and hence we have that for all $n\geqslant1$,
$\sqsubseteq$ is a wqo on $\mathbf{T}_{=k+1,<n}^{\omega}$. \end{proof}

\subsection{Finite Axiomatizability}

Recall that a transitive logic is of weak width $1$ if it contains
$\mathsf{Wid}_{1}^{+}$. In the subsection, we show the finite axiomatizability
of all transitive logics of finite depth and of finite weak width $1$
that contains $\mathsf{Wid}_{n}^{\bullet}$ for an $n\geqslant1$
(Theorem \ref{thm:K4+Bn+W*k-FA}).

Let $\mathfrak{F}=\left\langle W,R\right\rangle $ be a transitive
frame. The \emph{skeleton of} $\mathfrak{F}$ is $\mathfrak{sk}(\mathfrak{F})=\left\langle \mathfrak{sk}(W),\mathbf{\mathfrak{sk}}(R)\right\rangle $,
where $\mathfrak{sk}(W)$ is the set of clusters in $\mathfrak{F}$,
and for all $\mathbf{c},\mathbf{d}\in\mathfrak{sk}(W)$, $\left\langle \mathbf{c},\mathbf{d}\right\rangle \in\mathbf{\mathfrak{sk}}(R)$
iff $Rwu$ for some $w\in\mathbf{c}$ and $u\in\mathbf{d}$ (in fact,
iff $Rwu$ for all $w\in\mathbf{c}$ and $u\in\mathbf{d}$). For any
binary relation $R$ on a set $W$, we use $R^{*}$ for the reflexive
closure of $R$, i.e., $R\cup\{\left\langle w,w\right\rangle :w\in W\}$,
and use $R^{-1}$ for the inverse of $R$, i.e., $\{\left\langle w,u\right\rangle :\left\langle u,w\right\rangle \in R\}$.
We fix $\mathfrak{sk}(\mathfrak{F})^{*}=\left\langle \mathfrak{sk}(W),\mathbf{\mathfrak{sk}}(R)^{*}\right\rangle $
and $\mathfrak{sk}(\mathfrak{F})^{-1}=\left\langle \mathfrak{sk}(W),(\mathbf{\mathfrak{sk}}(R)^{*})^{-1}\right\rangle $. 

Let $\mathfrak{F}=\left\langle W,R\right\rangle $ be any finite transitive
frame for $\mathsf{Wid}_{1}^{+}$ such that $\mathfrak{sk}(\mathfrak{F})^{-1}$
is a finite tree. The\emph{ representation} \emph{tree} of $\mathfrak{F}$
is the following $\omega$-tree: 
\begin{equation}
\mathfrak{rt}(\mathfrak{F})=\left\langle \mathfrak{sk}(\mathfrak{F})^{-1},\tau\right\rangle ,\label{eq: r.t.}
\end{equation}
where for each $\mathbf{c}\in\mathfrak{sk}(W)$, $\tau(\mathbf{c})=\left\vert \mathbf{c}\right\vert $
if $\mathbf{c}$ is a nondegenerate cluster in $\mathfrak{F}$, otherwise
$\tau(\mathbf{c})=0$. 

\begin{lemma}For any finite transitive frames $\mathfrak{F}$ and
$\mathfrak{G}$ for $\mathsf{Wid}_{1}^{+}$ such that $\mathfrak{sk}(\mathfrak{F})^{-1}$
and $\mathfrak{sk}(\mathfrak{G})^{-1}$ are finite trees, if $\mathfrak{rt}(\mathfrak{F})\sqsubseteq\mathfrak{rt}(\mathfrak{\mathfrak{G}})$,
then $\mathfrak{\mathfrak{G}}$ is reducible to $\mathfrak{F}$.\label{coro: covering function-1}\label{prop:tree-reducible-1}
\end{lemma}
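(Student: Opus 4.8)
The plan is to induct on the height of $\mathfrak{rt}(\mathfrak{F})$, following the two clauses in the definition of $\sqsubseteq$, and to read off from $\mathfrak{rt}(\mathfrak{F})\sqsubseteq\mathfrak{rt}(\mathfrak{G})$ a reduction of $\mathfrak{G}$ onto $\mathfrak{F}$ built cluster by cluster. The setup I would lean on is that, since $\mathfrak{sk}(\mathfrak{F})^{-1}$ and $\mathfrak{sk}(\mathfrak{G})^{-1}$ are finite trees, the root $\mathbf{c}_0$ of $\mathfrak{rt}(\mathfrak{F})$ is the $R$-greatest cluster of $\mathfrak{F}$ (the cluster every other cluster sees), each immediate child of $\mathbf{c}_0$ is an immediate $R$-predecessor whose up-set is the two-element chain $\{\mathbf{c}_{i},\mathbf{c}_0\}$, and the subtree hanging below such a child is itself the representation tree of the restriction of $\mathfrak{F}$ to that subtree's clusters, which is again a finite transitive frame for $\mathsf{Wid}_1^+$ with tree inverse skeleton. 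Thus the induction hypothesis applies to the child-subframes, and a reduction of $\mathfrak{G}$ onto $\mathfrak{F}$ will be assembled from a surjection of the top cluster of $\mathfrak{G}$ onto $\mathbf{c}_0$ together with reductions of the child-subframes, glued along the top. The top surjection exists because the root-label condition $s\preccurlyeq s'$ in $\sqsubseteq$ forces the two tops to be both degenerate or both nondegenerate with the $\mathfrak{F}$-top no larger.

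For the base case a one-node $\mathfrak{rt}(\mathfrak{F})$ forces, by clause (1), a one-node $\mathfrak{rt}(\mathfrak{G})$, and the surjection between the two single clusters is immediate from $\preccurlyeq$. For the inductive step I would treat the two kinds of children exactly as clause (2) does. The degenerate children fall under clause (a): since $m=k$ they are in bijection with $\mathfrak{t}_i\sqsubseteq\mathfrak{t}'_i$, so the induction hypothesis supplies a reduction of each degenerate $\mathfrak{G}$-subframe onto its matching $\mathfrak{F}$-subframe, and these are glued in without remainder. The nondegenerate children fall under clause (b): the induction hypothesis reduces the $\mathfrak{G}$-subframes at the matched positions $j_{m+1}<\cdots<j_{m+n}$ onto the corresponding $\mathfrak{F}$-subframes, while the remaining, \emph{unmatched} nondegenerate $\mathfrak{G}$-subframes have to be absorbed. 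The device I intend to use is to collapse each unmatched $\mathfrak{G}$-subtree \emph{entirely} onto the root cluster $\mathbf{c}_{\min}$ of the $\preccurlyeq$-minimal nondegenerate child of $\mathfrak{F}$, that is, the cluster $root(\mathfrak{t}_{m+n})$: one maps the (nondegenerate) root cluster of the unmatched subtree onto $\mathbf{c}_{\min}$ surjectively and sends everything strictly below it to a single point of $\mathbf{c}_{\min}$.

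The hard part will be verifying that this collapse respects the reduction clauses, and this is precisely where the $\min\!\to\!\min$ requirement built into $\sqsubseteq$ (inherited from $\ll$) is needed. For the within-cluster surjection I need $\lvert\mathbf{c}_{\min}\rvert\leqslant\lvert\mathbf{d}_r\rvert$ for every unmatched nondegenerate $\mathfrak{G}$-child root $\mathbf{d}_r$; this follows because $\mathfrak{t}_{m+n}\sqsubseteq\mathfrak{t}'_{k+l}$ makes $\mathbf{c}_{\min}$ no larger than the smallest nondegenerate $\mathfrak{G}$-child, hence no larger than any of them. The subtler point is the back condition: a point of an unmatched subtree sees, transitively, the top and, through the root $\mathbf{d}_r$, all of $\mathbf{c}_{\min}$, whereas the $R$-up-set of $\mathbf{c}_{\min}$ in $\mathfrak{F}$ is exactly $\{\mathbf{c}_{\min},\mathbf{c}_0\}$ because $\mathbf{c}_{\min}$ is an immediate child of the root; these two observations are what make the collapse a genuine reduction. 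I would also invoke the clause-(b) guarantee that $n=0$ forces $l=0$, so an unmatched nondegenerate child can arise only when a target $\mathbf{c}_{\min}$ already exists, and the fact that reductions send nondegenerate clusters to nondegenerate clusters — which is exactly why degenerate children cannot be collapsed and the rigid condition $m=k$ is unavoidable.

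Finally I would check that the glued map is globally a reduction. Surjectivity is clear since $\mathbf{c}_0$ is hit by the top of $\mathfrak{G}$ and every child-subframe of $\mathfrak{F}$ is covered by a matched reduction. For the forth and back conditions at an arbitrary point $x$ of a child-subframe, the relevant successors of $f(x)$ split into those lying inside the same $\mathfrak{F}$-subframe — handled by the subframe reduction already in hand — and the single extra cluster $\mathbf{c}_0$; the latter is taken care of by the one global obligation that every point of $\mathfrak{G}$ sees a preimage of $\mathbf{c}_0$, which holds because every point sees the top cluster $\mathbf{d}_0$ and $f(\mathbf{d}_0)=\mathbf{c}_0$. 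Since there are no intermediate clusters between an immediate child and the root, no further successors intervene, and the per-point conditions assemble into a reduction of $\mathfrak{G}$ onto $\mathfrak{F}$, completing the induction.
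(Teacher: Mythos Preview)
Your proposal is correct and follows essentially the same inductive strategy as the paper: induct on the height of $\mathfrak{rt}(\mathfrak{F})$, handle the root clusters via $\preccurlyeq$, invoke the induction hypothesis on the matched children (degenerate ones bijectively via clause~(a), nondegenerate ones via the embedding in clause~(b)), and absorb each unmatched nondegenerate $\mathfrak{G}$-subtree by collapsing it onto the root cluster $root(\mathfrak{t}_{m+n})$ of the $\preccurlyeq$-minimal nondegenerate $\mathfrak{F}$-child, using the min-to-min requirement $\mathfrak{t}_{m+n}\sqsubseteq\mathfrak{t}'_{k+l}$ to secure the needed cardinality bound. Your description of the collapse (onto the single cluster $\mathbf{c}_{\min}$, with the back condition discharged because the $R$-up-set of $\mathbf{c}_{\min}$ is exactly $\{\mathbf{c}_{\min},\mathbf{c}_0\}$) is in fact more explicit than the paper's somewhat loose specification of $g$, but the construction and the verification are the same.
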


\begin{proof} We prove it by induction on the height of $\mathfrak{rt}(\mathfrak{F})$.
Let $\mathfrak{rt}(\mathfrak{F})=\left\langle \mathfrak{sk}(\mathfrak{F})^{-1},\tau\right\rangle $
and $\mathfrak{rt}(\mathfrak{G})=\left\langle \mathfrak{sk}(\mathfrak{G})^{-1},\sigma\right\rangle $,
and suppose that $\mathfrak{rt}(\mathfrak{F})\sqsubseteq\mathfrak{rt}(\mathfrak{\mathfrak{G}})$.
Consider $heit(\mathfrak{rt}(\mathfrak{F}))=1$. By definition of
$\sqsubseteq$, we have that
\begin{align}
 & heit(\mathfrak{rt}(\mathfrak{G}))=1\text{ and}\label{eq:height =00003D 1}\\
 & \tau(root(\mathfrak{sk}(\mathfrak{F})^{-1}))\preccurlyeq\sigma(root(\mathfrak{sk}(\mathfrak{G})^{-1})).\label{eq: root less-than}
\end{align}
By (\ref{eq:height =00003D 1}), both $\mathfrak{F}$ and $\mathfrak{G}$
are universal frames, i.e., containing only one cluster. Assume that
$\mathbf{c}$ and $\mathbf{d}$ is the unique cluster in $\mathfrak{F}$
and $\mathfrak{G}$, respectively. It follows from (\ref{eq: root less-than})
that $\tau(\mathbf{c})\preccurlyeq\tau(\mathbf{d})$. By definition
of $\preccurlyeq$, either $\tau(\mathbf{c})=\tau(\mathbf{d})=0$
or $0<\tau(\mathbf{c})\leqslant\tau(\mathbf{d})$. If the former holds,
then we have by (\ref{eq: r.t.}) that both $\mathbf{c}$ and $\mathbf{d}$
are degenerate clusters; if the latter holds, then we have by (\ref{eq: r.t.})
that both $\mathbf{c}$ and $\mathbf{d}$ are nondegenerate clusters
and $\left\vert \mathbf{c}\right\vert <\left\vert \mathbf{d}\right\vert $.
In either case, there is a function $f$ from $\mathbf{d}$ onto $\mathbf{c}$
that reduces $\mathfrak{G}$ to $\mathfrak{F}$. 

Consider $heit(\mathfrak{rt}(\mathfrak{F}))=k$. Let $\mathfrak{rt}(\mathfrak{F})=\left\langle \left(\mathbf{r},s\right),\left(\mathfrak{t}_{1},\ldots,\mathfrak{t}_{m}\right),\left(\mathfrak{t}_{m+1},\ldots,\mathfrak{t}_{m+n}\right)\right\rangle $
and $\mathfrak{rt}(\mathfrak{G})=\left\langle \left(\mathbf{r}',s'\right),\left(\mathfrak{t}'_{1},\ldots,\mathfrak{t}'_{k}\right),\left(\mathfrak{t}'_{k+1},\ldots,\mathfrak{t}'_{k+l}\right)\right\rangle $.
Since $\mathfrak{rt}(\mathfrak{F})\sqsubseteq\mathfrak{rt}(\mathfrak{\mathfrak{G}})$,
we have that 
\begin{enumerate}
\item $s\preccurlyeq s'$, 
\item $m=k$ and for each $1\leqslant i\leqslant m$, $\mathfrak{t}_{i}\sqsubseteq\mathfrak{t}'_{i}$,
\item either $l=n=0$, or $l\geqslant n>0$ and $\mathfrak{t}_{m+n}\sqsubseteq\mathfrak{t}'_{k+l}$
and there are $j_{m+1},\ldots,j_{m+n}$ such that $k+1\leqslant j_{m+1}<\cdots<j_{m+n}\leqslant k+l$,
and $\mathfrak{t}_{h}\sqsubseteq\mathfrak{t}'_{j_{h}}$ for each $h$
with $m+1\leqslant h\leqslant m+n$. 
\end{enumerate}
Apply the same reason as the base case, we have by (i) that there
is a function $f$ from $\mathbf{r}'$ onto $\mathbf{r}$ such that
$f$ reduces $\mathfrak{G}\upharpoonright\mathbf{r}'$ to $\mathfrak{F}\upharpoonright\mathbf{r}$.
Since $heit(\mathfrak{rt}(\mathfrak{F}))=k$, the heights of $\mathfrak{t}_{1},\ldots,\mathfrak{t}_{m},\mathfrak{t}_{m+1},\ldots,\mathfrak{t}_{m+n}$
are all less than $k$, and hence by (ii), (iii) and induction hypothesis,
we have that for each $1\leqslant i\leqslant m$, there is a function
$f_{i}$ that reduces $\mathfrak{G}\upharpoonright(\bigcup dom(\mathfrak{t}'_{i}))$
to $\mathfrak{F}\upharpoonright(\bigcup dom(\mathfrak{t}_{i}))$,
and for each $h$ with $m+1\leqslant h\leqslant m+n$, there is a
function $f_{h}$ that reduces $\mathfrak{G}\upharpoonright(\bigcup dom(\mathfrak{t}'_{j_{h}}))$
to $\mathfrak{F}\upharpoonright(\bigcup dom(\mathfrak{t}_{h}))$.
Let $\neg J=\{k+1,\ldots,k+l\}-\{j_{m+1},\ldots,j_{m+n}\}$. It follows
from (iii) that $\mathfrak{t}_{m+n}\sqsubseteq\mathfrak{t}'_{k+l}$,
and hence $\left|root(\mathfrak{t}_{m+n})\right|\preccurlyeq\left|root(\mathfrak{t}'_{k+l})\right|$.
We then have by (\ref{eq: tree representation}) that $0<\left|root(\mathfrak{t}_{m+n})\right|\leqslant min\{\left|root(\mathfrak{t}'_{j})\right|:j\in\neg J\}$,
and thus $root(\mathfrak{t}_{m+n})$ and elements of $\{root(\mathfrak{t}'_{j}):j\in\neg J\}$
are nondegenerate clusters. Let $g$ be any function from $\bigcup\bigcup_{j\in\neg J}dom(\mathfrak{t}'_{j})$
onto $\bigcup dom(\mathfrak{t}_{m+n})$ such that $g[root(\mathfrak{t}'_{j})]=root(\mathfrak{t}_{m+n})$
for each $j\in\neg J$. This is possible because of $\left|root(\mathfrak{t}_{m+n})\right|\leqslant min\{\left|root(\mathfrak{t}'_{j})\right|:j\in\neg J\}$.
It is easy to see that $g$ reduces $\mathfrak{G}\upharpoonright(\bigcup\bigcup_{j\in\neg J}dom(\mathfrak{t}'_{j}))$
to $\mathfrak{F}\upharpoonright(\bigcup dom(\mathfrak{t}_{m+n}))$.
Finally, let $h=f\cup g\cup\{f_{i}\}_{1\leqslant i\leqslant m+n}$.
It is routine to check that $h$ reduces $\mathfrak{G}$ to $\mathfrak{F}$.\end{proof} 

Recall that for each nonempty $X\subseteq W$, we use $\mathfrak{F}\upharpoonright X$
for the restriction of $\mathfrak{F}$ to $X$. Apply Proposition
\ref{prop: weak width frame condition}, the following fact is easily
verifiable.

\begin{fact}\label{fact: union of trees} Let $\mathfrak{F}=\left\langle W,R\right\rangle $
be any rooted finite transitive frame for $\mathsf{Wid}_{1}^{+}$,
and let $\mathbf{c}$ be the initial cluster in $\mathfrak{F}$. Then
there are disjoint subframes $\mathfrak{F}_{1},\ldots\mathfrak{F}_{n}$
of $\mathfrak{F}$ such that $\mathfrak{F}\upharpoonright(\left.\mathbf{c}\right\uparrow ^{-})=\biguplus_{1\leqslant i\leqslant n}\mathfrak{F}_{i}$
and $\mathfrak{sk}(\mathfrak{F}_{i})^{-1}$ is a finite tree for each
$1\leqslant i\leqslant n$. \end{fact}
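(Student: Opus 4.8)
The plan is to unwind the frame condition for $\mathsf{Wid}_1^+$ supplied by Proposition \ref{prop: weak width frame condition}, and then to exhibit the desired decomposition by splitting $\left.\mathbf{c}\right\uparrow^-$ along the connected components of the subframe generated above the initial cluster.

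First I would extract from Proposition \ref{prop: weak width frame condition} the structural content we need. Since $\mathfrak{F}$ is rooted with initial cluster $\mathbf{c}$, any $w \in \mathbf{c}$ is a root, so $\vec{R}wu$ holds exactly when $u \in \left.\mathbf{c}\right\uparrow^-$; applying the proposition at $w$ tells us that for every such $u$, the generated subframe $\mathfrak{F}|_u$ is of width at most $1$. A transitive frame of width at most $1$ has the property that its skeleton is linear (no two distinct clusters are $\perp_R$-incomparable), so each $\mathfrak{sk}(\mathfrak{F}|_u)$ is a finite chain. This is the local observation that will force the tree shape after we account for branching at the root.

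Next I would construct the subframes $\mathfrak{F}_1,\ldots,\mathfrak{F}_n$. I would define an equivalence on the clusters lying in $\left.\mathbf{c}\right\uparrow^-$ by taking $\mathbf{d} \approx \mathbf{e}$ iff they have a common $\mathfrak{sk}(R)$-ancestor strictly above $\mathbf{c}$ — equivalently, iff some cluster strictly above $\mathbf{c}$ is $\mathfrak{sk}(R)^*$-below both. The classes of $\approx$ are exactly the connected components of $\mathfrak{sk}(\mathfrak{F}\upharpoonright(\left.\mathbf{c}\right\uparrow^-))^{-1}$; let $\mathfrak{F}_1,\ldots,\mathfrak{F}_n$ be the restrictions of $\mathfrak{F}$ to the unions of the clusters in each class. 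These are pairwise disjoint by construction, their union is all of $\left.\mathbf{c}\right\uparrow^-$, and since no $R$-edge crosses between distinct components (any edge $\mathbf{d}\to\mathbf{e}$ would put $\mathbf{e}$ in the same component as $\mathbf{d}$), each $\mathfrak{F}_i$ is in fact a subframe and $\mathfrak{F}\upharpoonright(\left.\mathbf{c}\right\uparrow^-) = \biguplus_{1\leqslant i\leqslant n}\mathfrak{F}_i$.

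Finally I would verify that $\mathfrak{sk}(\mathfrak{F}_i)^{-1}$ is a finite tree. Finiteness is immediate from finiteness of $\mathfrak{F}$. For the tree axioms: each $\mathfrak{F}_i$ is connected by the choice of components, and I must check downward connectedness and no downward branching for $(\mathfrak{sk}(R)^*)^{-1}$. The key input is the width-$1$ fact above — each component arises as the part of the linear chains $\mathfrak{sk}(\mathfrak{F}|_u)$ hanging off the branching at $\mathbf{c}$, so within a single component any two clusters sharing a $\mathfrak{sk}(R)^{-1}$-descendant must be $\mathfrak{sk}(R)^*$-comparable, which is precisely no downward branching in the inverted skeleton. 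The main obstacle I anticipate is the bookkeeping here: one must argue carefully that the minimal clusters of the components (the immediate proper successors of $\mathbf{c}$) are the roots of the inverted skeletons, and that width-$1$ really does rule out downward branching rather than merely antichains through $\mathbf{c}$ itself. Once that local comparability is pinned down, the verification of the tree conditions is routine, so I would state it as ``easily verifiable'' in line with the paper's phrasing.
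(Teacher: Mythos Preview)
Your overall strategy --- invoke Proposition~\ref{prop: weak width frame condition} to get that every $\mathfrak{F}|_u$ with $u\in\left.\mathbf{c}\right\uparrow^-$ has width at most~$1$, then decompose $\left.\mathbf{c}\right\uparrow^-$ into connected components --- is exactly what the paper intends by ``easily verifiable,'' and your verification of no-downward-branching via the width-$1$ condition is correct.

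There are, however, two slips worth fixing. First, your relation $\approx$ (common $\mathfrak{sk}(R)$-predecessor strictly above $\mathbf{c}$) is not transitive and is \emph{not} equivalent to connectedness: take two incomparable clusters $\mathbf{a}_1,\mathbf{a}_2$ that are immediate successors of $\mathbf{c}$ and share a common successor $\mathbf{b}$; then $\mathbf{a}_1\approx\mathbf{b}\approx\mathbf{a}_2$ but $\mathbf{a}_1\not\approx\mathbf{a}_2$. Simply work with connected components directly.

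Second, and more substantively, you have the orientation of $\mathfrak{sk}(\mathfrak{F}_i)^{-1}$ reversed. In $(\mathfrak{sk}(R)^*)^{-1}$ one has $\mathbf{d}\leq\mathbf{e}$ iff $\mathbf{e}$ sees $\mathbf{d}$ in the original frame, so the root of the tree is the \emph{final} (maximal) cluster of the component, not an immediate successor of $\mathbf{c}$. The same example shows why this matters: a single component may contain several $\mathfrak{sk}(R)$-minimal clusters (here $\mathbf{a}_1$ and $\mathbf{a}_2$), so your proposed root need not be unique, and downward-connectedness cannot be read off from it. The correct argument for downward-connectedness is that width~$1$ forces the upward cone of every cluster in $\left.\mathbf{c}\right\uparrow^-$ to be a chain, hence each component has a \emph{unique} top cluster; any two clusters in the component see this top, which is therefore a common $\leq$-predecessor in the inverted skeleton. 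With this correction the rest of your outline goes through.
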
 

Let $\mathfrak{F}=\left\langle W,R\right\rangle $ be any rooted finite
transitive frame for $\mathsf{Wid}_{1}^{+}$ and let $\mathbf{c}$
be the initial cluster in $\mathfrak{F}$. According to Fact \ref{fact: union of trees},
there there are disjoint subframes $\mathfrak{F}_{1},\ldots\mathfrak{F}_{h}$
of $\mathfrak{F}$ such that $\mathfrak{F}\upharpoonright\left.\mathbf{c}\right\uparrow ^{-}=\biguplus_{1\leqslant i\leqslant h}\mathfrak{F}_{i}$
and $\mathfrak{sk}(\mathfrak{F}_{i})^{-1}$ is a finite tree for each
$1\leqslant i\leqslant h$. Let $\mathbf{T}=\{\mathfrak{rt}(\mathfrak{F}_{1}),\ldots,\mathfrak{rt}(\mathfrak{F}_{h})\}$,
and assume that $\{\mathfrak{t}_{1},\ldots,\mathfrak{t}_{m}\}=\{\mathfrak{t}\in\mathbf{T}:\mathfrak{t}=\left\langle \mathfrak{T},\tau\right\rangle \wedge\tau(root(\mathfrak{t}))=0\}$
and $\{\mathfrak{t}_{m+1},\ldots,\mathfrak{t}_{m+n}\}=\{\mathfrak{t}\in\mathbf{T}:\mathfrak{t}=\left\langle \mathfrak{T},\tau\right\rangle \wedge\tau(root(\mathfrak{t}))>0\}$
with $\tau_{m+n}(root(\mathfrak{t}_{m+n}))=min\{\tau_{i}(root(\mathfrak{t}_{i})):m\leqslant i\leqslant m+n\}$,
in which $\tau_{i}$ is the labeling function in $\mathfrak{t}_{i}$.
The\emph{ standard representation} \emph{tree} of $\mathfrak{F}$
is the following $\omega$-tree: 
\[
\mathfrak{srt}(\mathfrak{F})=\left\langle \left(\mathbf{c},\tau(\mathbf{c})\right),\left(\mathfrak{t}_{1},\ldots,\mathfrak{t}_{m}\right),\left(\mathfrak{t}_{m+1},\ldots,\mathfrak{t}_{m+n}\right)\right\rangle ,
\]
where for each $\mathbf{c}\in\mathfrak{sk}(W)$, $\tau(\mathbf{c})=\left\vert \mathbf{c}\right\vert $
if $\mathbf{c}$ is a nondegenerate cluster in $\mathfrak{F}$, otherwise
$\tau(\mathbf{c})=0$. Note that for any finite transitive frame $\mathfrak{F}$
that both $\mathfrak{srt}(\mathfrak{F})$ and $\mathfrak{rt}(\mathfrak{F})$
are well-defined, they are always different from each other, since
the root of $\mathfrak{srt}(\mathfrak{F})$ is the initial cluster
in $\mathfrak{F}$ and the root of $\mathfrak{rt}(\mathfrak{F})$
is the final cluster in $\mathfrak{F}$. Apply Lemma \ref{prop:tree-reducible-1},
the following Lemma can be proved in a similar way as the inductive
case in Lemma \ref{prop:tree-reducible-1}.

\begin{lemma} Let $\mathfrak{F}$ and $\mathfrak{G}$ be finite transitive
frames for $\mathsf{Wid}_{1}^{+}$, and let $\mathfrak{srt}(\mathfrak{F})\sqsubseteq\mathfrak{srt}(\mathfrak{\mathfrak{G}})$.
Then $\mathfrak{\mathfrak{G}}$ is reducible to $\mathfrak{F}$.\label{coro: covering function-1-1}\label{prop:srt-reducible-1-1}
\end{lemma}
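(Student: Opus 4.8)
The plan is to reduce Lemma \ref{prop:srt-reducible-1-1} to the already-proven Lemma \ref{prop:tree-reducible-1} by exploiting the structural parallel between the standard representation tree $\mathfrak{srt}(\mathfrak{F})$ and the representation tree $\mathfrak{rt}(\mathfrak{F})$. The key observation is that, unwinding the definitions of $\mathfrak{srt}$ and $\mathfrak{rt}$, the former differs from the latter only in that its root is the \emph{initial} cluster of $\mathfrak{F}$ rather than the \emph{final} cluster; equivalently, $\mathfrak{srt}(\mathfrak{F})$ packages the root cluster $\mathbf{c}$ together with the representation trees of the generated subframes $\mathfrak{F}_1,\ldots,\mathfrak{F}_h$ sitting above $\mathbf{c}$, exactly as in the inductive step of Lemma \ref{prop:tree-reducible-1}. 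So I would carry out an argument that mirrors the inductive case of that earlier proof, rather than redoing an induction from scratch.

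Concretely, write $\mathfrak{srt}(\mathfrak{F})=\left\langle\left(\mathbf{c},\tau(\mathbf{c})\right),\left(\mathfrak{t}_{1},\ldots,\mathfrak{t}_{m}\right),\left(\mathfrak{t}_{m+1},\ldots,\mathfrak{t}_{m+n}\right)\right\rangle$ and $\mathfrak{srt}(\mathfrak{G})=\left\langle\left(\mathbf{d},\sigma(\mathbf{d})\right),\left(\mathfrak{t}'_{1},\ldots,\mathfrak{t}'_{k}\right),\left(\mathfrak{t}'_{k+1},\ldots,\mathfrak{t}'_{k+l}\right)\right\rangle$, where $\mathbf{c}$ and $\mathbf{d}$ are the initial clusters of $\mathfrak{F}$ and $\mathfrak{G}$. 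The hypothesis $\mathfrak{srt}(\mathfrak{F})\sqsubseteq\mathfrak{srt}(\mathfrak{G})$ unfolds, by the definition of $\sqsubseteq$, into exactly the three conditions (i)--(iii) appearing in the inductive case of Lemma \ref{prop:tree-reducible-1}: namely $\tau(\mathbf{c})\preccurlyeq\sigma(\mathbf{d})$, the matching $m=k$ with $\mathfrak{t}_{i}\sqsubseteq\mathfrak{t}'_{i}$ for the degenerate-root subtrees, and the order-embedding condition on the nondegenerate-root subtrees together with $\mathfrak{t}_{m+n}\sqsubseteq\mathfrak{t}'_{k+l}$. From $\tau(\mathbf{c})\preccurlyeq\sigma(\mathbf{d})$ I obtain, as in the base case of the earlier proof, a surjection $f\colon\mathbf{d}\to\mathbf{c}$ reducing $\mathfrak{G}\upharpoonright\mathbf{d}$ to $\mathfrak{F}\upharpoonright\mathbf{c}$. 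Each $\mathfrak{t}_i$ and $\mathfrak{t}'_j$ is a genuine representation tree of a rooted subframe $\mathfrak{F}_i$, $\mathfrak{G}_j$ of the form treated in Lemma \ref{prop:tree-reducible-1}, so the embeddings $\mathfrak{t}_{i}\sqsubseteq\mathfrak{t}'_{i}$ and $\mathfrak{t}_{h}\sqsubseteq\mathfrak{t}'_{j_{h}}$ yield reductions $f_i$, $f_h$ directly by that lemma. For the indices in $\neg J=\{k+1,\ldots,k+l\}-\{j_{m+1},\ldots,j_{m+n}\}$ not hit by the embedding, I build, exactly as before, a surjection $g$ collapsing $\bigcup\bigcup_{j\in\neg J}dom(\mathfrak{t}'_j)$ onto $\bigcup dom(\mathfrak{t}_{m+n})$, using the minimality clause of the standard representation triple to guarantee $0<|root(\mathfrak{t}_{m+n})|\leqslant\min\{|root(\mathfrak{t}'_j)|:j\in\neg J\}$ so that the required onto map between nondegenerate clusters exists. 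Finally I set $h=f\cup g\cup\{f_i\}_{1\leqslant i\leqslant m+n}$ and check it reduces $\mathfrak{G}$ to $\mathfrak{F}$.

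The main obstacle is the bookkeeping around the two invocations of Lemma \ref{prop:tree-reducible-1}: I must verify that each $\mathfrak{F}_i$ obtained from Fact \ref{fact: union of trees} is indeed a finite transitive frame for $\mathsf{Wid}_1^+$ whose skeleton-inverse is a finite tree, so that the earlier lemma genuinely applies to the pairs $(\mathfrak{F}_i,\mathfrak{G}_j)$, and that the subtrees $\mathfrak{t}_i=\mathfrak{rt}(\mathfrak{F}_i)$ really are the representation trees (not standard representation trees) of those subframes. There is a slight subtlety in that $\mathfrak{srt}(\mathfrak{F})$ is rooted at the initial cluster whereas $\mathfrak{rt}(\mathfrak{F}_i)$ of each piece is rooted at a final cluster; I need to confirm that the $\sqsubseteq$-comparison of standard representation trees restricts correctly to $\sqsubseteq$-comparisons of the component representation trees, which is precisely what the ``Note'' preceding Theorem \ref{thm: wqo on trees} guarantees by matching clauses (a) and (b) of $\sqsubseteq$ with $\trianglelefteq$ and $\ll$. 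The remaining verification that the pieced-together $h$ is a reduction — preservation of $R$ forward and the back condition across the cluster boundaries — is routine given that the $\mathfrak{F}_i$ are disjoint generated subframes above $\mathbf{c}$ and the $\mathsf{Wid}_1^+$ condition forces each proper successor cluster of $\mathbf{c}$ to lie in a single tree component.
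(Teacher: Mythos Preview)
Your proposal is correct and follows essentially the same approach as the paper: the paper's own proof is the single sentence ``Apply Lemma~\ref{prop:tree-reducible-1}, the following Lemma can be proved in a similar way as the inductive case in Lemma~\ref{prop:tree-reducible-1},'' and you have spelled out precisely that inductive-case argument, invoking Lemma~\ref{prop:tree-reducible-1} as a black box on the component representation trees and handling the root cluster and the unmatched indices $\neg J$ exactly as in the earlier proof. Your identification of the bookkeeping obstacles (that Fact~\ref{fact: union of trees} guarantees the hypotheses needed to invoke Lemma~\ref{prop:tree-reducible-1} on each $\mathfrak{F}_i$, and that the subtrees of $\mathfrak{srt}(\mathfrak{F})$ are genuine $\mathfrak{rt}$'s) is accurate and goes slightly beyond what the paper makes explicit.
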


\begin{lemma} Let $n,k\geqslant1$ and let $(\mathfrak{F}_{k})_{k\in\omega}$
be an infinite sequence of finite rooted transitive frames for $\mathsf{Wid}_{k}^{\bullet}$
of rank at most $m$ and of weak width $1$. Then there is an infinite
$I\subseteq\omega$ such that for all $i,j\in I$ with $i<j$, $\mathfrak{F}_{j}$
is reducible to $\mathfrak{F}_{i}$.\label{lem: (Fi) <=00003D k,n -> no odd seq}
\end{lemma}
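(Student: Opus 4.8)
The plan is to encode each frame of the sequence by its standard representation tree, to check that all these trees fall into a single class $\mathbf{T}_{\leqslant m,<N}^{\omega}$ on which $\sqsubseteq$ is a well-quasi-order, and then to extract the desired subsequence directly from the well-quasi-order and translate it back into reductions via Lemma \ref{prop:srt-reducible-1-1}. Here $m$ denotes the rank bound and $k$ the parameter of $\mathsf{Wid}_k^{\bullet}$; I reindex the given sequence as $(\mathfrak{F}_i)_{i\in\omega}$.

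First I would observe that, being of weak width $1$, each $\mathfrak{F}_i$ is a finite rooted transitive frame for $\mathsf{Wid}_1^{+}$, so that its standard representation tree $\mathfrak{srt}(\mathfrak{F}_i)$ is well-defined (via Fact \ref{fact: union of trees}). I then bound the two parameters governing membership in the tree classes. The height is controlled by the rank: since $\mathfrak{F}_i$ is of rank at most $m$, no chain in its cluster order has length exceeding $m$, whence $heit(\mathfrak{srt}(\mathfrak{F}_i))\leqslant m$.

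The essential point is to bound the number of label-$0$ nodes uniformly. By construction these are precisely the degenerate clusters of $\mathfrak{F}_i$, i.e.\ its irreflexive points. Since $\mathfrak{F}_i\vDash\mathsf{Wid}_k^{\bullet}$, Proposition \ref{prop: frame cond 4 Wid_n-1} tells us that every antichain of irreflexive points has at most $k$ elements, while the rank bound tells us that every $\vec{R}$-chain of them has length at most $m$. A finite poset of height at most $m$ whose antichains have size at most $k$ has at most $mk$ elements (split it into at most $m$ antichains according to the length of the longest chain ending at each point, each such level being an antichain of irreflexive points), so $\mathfrak{F}_i$ has at most $mk$ degenerate clusters and thus $\left|dom(\mathfrak{srt}(\mathfrak{F}_i))^{0}\right|<mk+1$. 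Putting $N=mk+1$, every $\mathfrak{srt}(\mathfrak{F}_i)$ lies in $\mathbf{T}_{\leqslant m,<N}^{\omega}$.

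It then remains to assemble the pieces. By Theorem \ref{thm: wqo on trees}, $\sqsubseteq$ is a well-quasi-order on $\mathbf{T}_{\leqslant m,<N}^{\omega}$, so the infinite sequence $(\mathfrak{srt}(\mathfrak{F}_i))_{i\in\omega}$ admits an infinite subsequence indexed by some infinite $I\subseteq\omega$ with $\mathfrak{srt}(\mathfrak{F}_i)\sqsubseteq\mathfrak{srt}(\mathfrak{F}_j)$ whenever $i<j$ in $I$; and then Lemma \ref{prop:srt-reducible-1-1} gives that $\mathfrak{F}_j$ is reducible to $\mathfrak{F}_i$ for all such $i<j$, as required. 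I expect the combinatorial bound on the number of degenerate clusters---combining the irreflexive-antichain bound from $\mathsf{Wid}_k^{\bullet}$ with the rank bound---to be the one genuinely load-bearing step; once the trees are confined to a fixed $\mathbf{T}_{\leqslant m,<N}^{\omega}$, the well-quasi-order theorem and the reduction lemma do the rest mechanically.
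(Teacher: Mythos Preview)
Your proposal is correct and follows essentially the same route as the paper: encode each frame by its standard representation tree, confine all such trees to $\mathbf{T}_{\leqslant m,<mk+1}^{\omega}$ by bounding the number of degenerate clusters, apply Theorem~\ref{thm: wqo on trees} to extract an infinite $\sqsubseteq$-chain, and invoke Lemma~\ref{prop:srt-reducible-1-1}. The only difference is presentational---you spell out the Dilworth-style level decomposition behind the bound $mk$ on irreflexive points, whereas the paper simply asserts the bound from Proposition~\ref{prop: frame cond 4 Wid*n} and the rank hypothesis.
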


\begin{proof} Since each $\mathfrak{F}_{i}$ is a frame for $\mathsf{Wid}_{k}^{\bullet}$
of rank at most $m$, we have by Proposition \ref{prop: frame cond 4 Wid*n}
that there are at most $m\times k$ degenerate clusters in $\mathfrak{F}_{i}$,
and hence $\mathfrak{srt}(\mathfrak{F}_{i})\in\mathbf{T}_{\leqslant m,<m\times k+1}^{\omega}$
for each $i\in\omega$. We then obtain by Theorem \ref{thm: wqo on trees}
that there is an infinite $I\subseteq\omega$ such that $(\mathfrak{srt}(\mathfrak{F}_{i}))_{i\in I}$
is an infinite $\sqsubseteq$-chain, and hence by Lemma \ref{prop:srt-reducible-1-1},
$\mathfrak{F}_{j}$ is reducible to $\mathfrak{F}_{i}$ for all $i,j\in I$
with $i<j$. \end{proof}

\begin{theorem} \label{thm:K4+Bn+W*k-FA}For all $n,k\geqslant1$,
all extensions of $\mathbf{K4B}_{n}\oplus\{\mathsf{Wid}_{1}^{+},\mathsf{Wid}_{k}^{\bullet}\}$
are finitely axiomatizable, and are hence decidable. \end{theorem}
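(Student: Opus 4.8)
The plan is to apply the finite-axiomatizability criterion of Theorem \ref{coro: trans L fa iff} to $\mathbf{L}=\mathbf{K4B}_{n}\oplus\{\mathsf{Wid}_{1}^{+},\mathsf{Wid}_{k}^{\bullet}\}$. First I would verify the hypothesis of that theorem: since every extension $\mathbf{L}'$ of $\mathbf{L}$ contains $\mathbf{K4}$ together with $\mathsf{B}_{n}$, it is a transitive logic of depth at most $n$, hence of finite depth, and so has the finite model property by Theorem \ref{thm: Segerberg, depth}. Thus $\mathbf{L}$ is a transitive logic all of whose extensions have the f.m.p., and by the equivalence of (i) and (iii) in Theorem \ref{coro: trans L fa iff} it suffices to prove that there is no infinite irreducible sequence of finite rooted frames for $\mathbf{L}$.

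Suppose for reductio that $\{\mathfrak{F}_{i}\}_{i\in\omega}$ is such a sequence. Each $\mathfrak{F}_{i}$, being a finite rooted frame for $\mathbf{L}$, is transitive (a frame for $\mathbf{K4}$), of rank at most $n$ (a frame for $\mathsf{B}_{n}$, by Proposition \ref{prop: frame cond 4 B_k}), of weak width $1$ (a frame for $\mathsf{Wid}_{1}^{+}$), and a frame for $\mathsf{Wid}_{k}^{\bullet}$. These are exactly the hypotheses of Lemma \ref{lem: (Fi) <=00003D k,n -> no odd seq} with rank bound $m=n$, so that lemma yields an infinite $I\subseteq\omega$ with $\mathfrak{F}_{j}$ reducible to $\mathfrak{F}_{i}$ for all $i<j$ in $I$. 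Now I would invoke the fact that a rooted frame is generated by its root, so each $\mathfrak{F}_{j}$ coincides with a point-generated subframe of itself; hence the reducibility of $\mathfrak{F}_{j}$ to $\mathfrak{F}_{i}$ for even a single pair $i<j$ in $I$ already contradicts the irreducibility of $\{\mathfrak{F}_{i}\}_{i\in\omega}$. This contradiction shows that no such sequence exists, so by Theorem \ref{coro: trans L fa iff} all extensions of $\mathbf{L}$ are finitely axiomatizable over $\mathbf{L}$. Since $\mathbf{L}$ is itself presented by a finite axiom set, every such extension is then finitely axiomatizable simpliciter.

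For decidability I would note that each extension $\mathbf{L}'$ of $\mathbf{L}$ is both finitely (hence recursively) axiomatizable and, as observed above, has the finite model property; by the standard argument -- enumerate proofs to confirm theoremhood, and simultaneously enumerate finite frames that validate the (finitely many) axioms of $\mathbf{L}'$ and refute the input formula, which is possible precisely because $\mathbf{L}'$ is finitely axiomatized -- such a logic is decidable.

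The assembly above is deliberately short because the real work lies upstream: essentially all the content is packed into Lemma \ref{lem: (Fi) <=00003D k,n -> no odd seq}, which in turn rests on the well-quasi-order Theorem \ref{thm: wqo on trees} for labelled trees and the reduction Lemma \ref{prop:srt-reducible-1-1}. Consequently the only genuine care needed at this final stage is (a) verifying that the frame conditions transfer from the defining formulas to the members of the sequence, and (b) recognizing that ``$\mathfrak{F}_{j}$ is reducible to $\mathfrak{F}_{i}$'' for the whole rooted frame is exactly the negation of forward-backward irreducibility. I do not expect any serious obstacle at this step.
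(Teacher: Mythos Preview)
Your proposal is correct and follows essentially the same route as the paper: invoke Theorem \ref{thm: Segerberg, depth} for the f.m.p., apply the criterion of Theorem \ref{coro: trans L fa iff}, transfer the frame conditions via Propositions \ref{prop: frame cond 4 B_k} and \ref{prop: weak width frame condition}, and use Lemma \ref{lem: (Fi) <=00003D k,n -> no odd seq} to contradict irreducibility. Your added remarks on why reducibility of rooted frames violates irreducibility and on the decidability argument are sound elaborations of points the paper leaves implicit.
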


\begin{proof} Let $\mathbf{L}=\mathbf{K4B}_{n}\oplus\{\mathsf{Wid}_{1}^{+},\mathsf{Wid}_{k}^{\bullet}\}$
with $n,k\geqslant1$. By Theorem \ref{thm: Segerberg, depth}, all
extensions of $\mathbf{L}$ have the f.m.p.\ To show that all extensions
of $\mathbf{L}$ are finitely axiomatizable, it then suffices by Theorem
\ref{coro: trans L fa iff} to let $\{\mathfrak{F}_{i}\}_{i\in\omega}$
be any infinite sequence of finite rooted frames for $\mathbf{L}$
and show that it is not irreducible. For each $i\in\omega$, because
$\mathfrak{F}_{i}$ is a frame for $\mathsf{B}_{n}$ and $\mathsf{Wid}_{1}^{+}$,
it is clear by Propositions \ref{prop: frame cond 4 B_k} and \ref{prop: weak width frame condition}
that $\mathfrak{F}_{i}$ is of rank at most $n$ and of weak width
$1$. Then by Lemma \ref{lem: (Fi) <=00003D k,n -> no odd seq}, $\mathfrak{F}_{j}$
is reducible to $\mathfrak{F}_{i}$ for some $i,j\in\omega$ with
$i<j$, and hence $\{\mathfrak{F}_{i}\}_{i\in\omega}$ is not irreducible.
\end{proof}

Since $\mathbf{S4B}_{n}$ is an extension of $\mathbf{K4B}_{n}\oplus\{\mathsf{Wid}_{k}^{\bullet}\}$
for all $n,k\geqslant1$, the following Corollary follows immediately
from Theorem \ref{thm:K4+Bn+W*k-FA}:

\begin{corollary} For all $n\geqslant1$, all extensions of $\mathbf{S4B}_{n}\oplus\{\mathsf{Wid}_{1}^{+}\}$
are finitely axiomatizable, and are hence decidable.\end{corollary}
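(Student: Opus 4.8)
The plan is to deduce the Corollary from Theorem \ref{thm:K4+Bn+W*k-FA} by showing that, over $\mathbf{S4}$, the antichain formulas $\mathsf{Wid}_{k}^{\bullet}$ are already derivable and hence may be added to the base logic for free. The first step is to establish that $\mathsf{Wid}_{k}^{\bullet}\in\mathbf{S4}$ for every $k\geqslant1$, so in particular $\mathsf{Wid}_{1}^{\bullet}\in\mathbf{S4B}_{n}$. Recall that $\mathbf{S4}$ is sound and complete with respect to the class of reflexive transitive frames. In any such frame every point $w$ satisfies $Rww$, so the frame has no irreflexive points and its only irreflexive antichain is $\emptyset$, whose cardinality $0$ is at most $k$. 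By Proposition \ref{prop: frame cond 4 Wid*n} (equivalently its corollary Proposition \ref{prop: frame cond 4 Wid_n-1}) this yields $\mathfrak{F}\vDash\mathsf{Wid}_{k}^{\bullet}$ for every reflexive transitive $\mathfrak{F}$, and therefore $\mathsf{Wid}_{k}^{\bullet}\in\mathbf{S4}$; this is precisely the observation recorded in the sentence preceding the Corollary.

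Next I would use this to obtain the inclusion $\mathbf{K4B}_{n}\oplus\{\mathsf{Wid}_{1}^{+},\mathsf{Wid}_{1}^{\bullet}\}\subseteq\mathbf{S4B}_{n}\oplus\{\mathsf{Wid}_{1}^{+}\}$. The right-hand logic contains $\mathbf{S4}\supseteq\mathbf{K4}$, the depth formula $\mathsf{B}_{n}$, the weak-width formula $\mathsf{Wid}_{1}^{+}$, and, by the step just carried out, $\mathsf{Wid}_{1}^{\bullet}$; so it includes every generator of the left-hand logic and hence the left-hand logic itself. Consequently every extension (in the sense of $\subseteq$) of $\mathbf{S4B}_{n}\oplus\{\mathsf{Wid}_{1}^{+}\}$ is also an extension of $\mathbf{K4B}_{n}\oplus\{\mathsf{Wid}_{1}^{+},\mathsf{Wid}_{1}^{\bullet}\}$.

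Finally I would apply Theorem \ref{thm:K4+Bn+W*k-FA} with $k=1$: all extensions of $\mathbf{K4B}_{n}\oplus\{\mathsf{Wid}_{1}^{+},\mathsf{Wid}_{1}^{\bullet}\}$ are finitely axiomatizable and decidable, so the same holds for the (sub-collection of) extensions of $\mathbf{S4B}_{n}\oplus\{\mathsf{Wid}_{1}^{+}\}$; decidability again follows from finite axiomatizability together with the f.m.p.\ granted by Theorem \ref{thm: Segerberg, depth} for transitive logics of finite depth. There is essentially no obstacle in this argument: its whole content is the vacuous validity of $\mathsf{Wid}_{k}^{\bullet}$ on reflexive frames. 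The only point deserving a careful statement is that ``extension'' is read as $\subseteq$-inclusion, so that the inclusion of base logics directly transfers the finite-axiomatizability conclusion of Theorem \ref{thm:K4+Bn+W*k-FA} to the present setting.
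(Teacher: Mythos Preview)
Your proposal is correct and follows essentially the same route as the paper: the paper simply notes that $\mathbf{S4B}_{n}$ is an extension of $\mathbf{K4B}_{n}\oplus\{\mathsf{Wid}_{k}^{\bullet}\}$ for all $n,k\geqslant1$ and invokes Theorem~\ref{thm:K4+Bn+W*k-FA}, while you spell out in more detail why $\mathsf{Wid}_{k}^{\bullet}\in\mathbf{S4}$ via Proposition~\ref{prop: frame cond 4 Wid*n} and the absence of irreflexive points in reflexive frames.
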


\section{Conclusion\label{sect: remarks}}

In this paper, we proved as our negative result that there are non-finitely-axiomatizble
extensions of $\mathbf{K4B}_{n}\oplus\mathsf{Wid}_{k}^{+}$ for all
$n\geqslant3$ and $k\geqslant2$, by a way of constructing infinite
irreducible sequences of finite rooted transitive frames of depth
$3$ and of weak width $2$. As our positive result, we showed that
all extensions of $\mathbf{K4B}_{n}\oplus\{\mathsf{Wid}_{1}^{+},\mathsf{Wid}_{k}^{\bullet}\}$
are finitely axiomatizable for all $n,k\geqslant1$, by a way of applying
\emph{wqo} on finite height $\omega$-trees. It can be shown that
that there are non-finitely-axiomatizble extensions of $\mathbf{K4B}_{n}\oplus\mathsf{Wid}_{k}^{\bullet}$
for all $n\geqslant3$ and $k\geqslant1$. Therefore formulas $\mathsf{Wid}_{1}^{+}$
play an essential role in our finite axiomatizability result. However,
the following problem still remains open: for each $n\geqslant1$,
are all extensions of $\mathbf{K4B}_{n}\oplus\mathsf{Wid}_{1}^{+}$
finitely axiomatizable? Finally, since the infinite irreducible sequences
of frames constructed in section \ref{sec:Transitive-Logics-of FD and FSW}
don't validate any formula $\mathsf{Wid}_{k}^{\bullet}$. So the following
problem is unsettled: for each $n,k\geqslant1$ and $m\geqslant2$,
are all extensions of $\mathbf{K4B}_{n}\oplus\{\mathsf{Wid}_{m}^{+},\mathsf{Wid}_{k}^{\bullet}\}$
finitely axiomatizable?

\providecommand{\bysame}{\leavevmode\hbox to3em{\hrulefill}\thinspace}
\providecommand{\MR}{\relax\ifhmode\unskip\space\fi MR }
\providecommand{\MRhref}[2]{%
  \href{http://www.ams.org/mathscinet-getitem?mr=#1}{#2}
}
\providecommand{\href}[2]{#2}

\end{document}